\title[Pollicott--Ruelle resolvent and Sobolev regularity]{Pollicott--Ruelle resolvent\\ and Sobolev regularity}
\author{Semyon Dyatlov}
\email{dyatlov@math.mit.edu}
\address{Department of Mathematics, Massachusetts Institute of Technology, Cambridge, MA 02139}
\dedicatory{In memory of Louis Nirenberg}
\begin{document}

\begin{abstract}
In this note we compute the threshold regularity
for meromorphic continuation of the Pollicott--Ruelle resolvent
of an Anosov flow as an operator on anisotropic Sobolev spaces,
in the setting of lifts to general vector bundles.
These thresholds are related to the Sobolev regularity needed for
the decay of correlations.
\end{abstract}

\maketitle

%%%%%%%%%%%%%%%%%%%%%%%%%%%%%%%%%%%%%%%%%%%%%%%%%%%%%%%%%%%%%%%%%%%%%%%%%%%%%%%%
%                                 INTRODUCTION                                 %
%%%%%%%%%%%%%%%%%%%%%%%%%%%%%%%%%%%%%%%%%%%%%%%%%%%%%%%%%%%%%%%%%%%%%%%%%%%%%%%%
\addtocounter{section}{1}
\addcontentsline{toc}{section}{1. Introduction}

Let $M$ be a compact $d$-dimensional $C^\infty$ manifold (without boundary)
and $X\in C^\infty(M;TM)$ be a nonvanishing vector field.
This field generates a flow
$$
\varphi^t:=\exp(tX):M\to M,\quad
t\in\mathbb R.
$$
A fundamental topic in dynamical systems is the study of the behavior
of \emph{correlations}
$$
\rho_{f,g}(t):=\int_M (f\circ\varphi^{-t})g,\quad
t\in\mathbb R
$$
where $f$ is an $L^2$ function and $g$ is an $L^2$ density on $M$.
(Here the line bundle of densities on $M$ is used because
the integral of a density over $M$ is invariantly defined;
we do not assume a priori that the flow preserves a smooth volume form.)
In particular, one is interested in \emph{mixing} (when $\rho_{f,g}(t)$
has a limit as $t\to\infty$) and also in the stronger property
of \emph{exponential mixing} (when the remainder in the mixing property
decays exponentially fast as $t\to\infty$). We note that even when exponential mixing
is known, it does not hold for all $f,g\in L^2$, instead one has to restrict
to more regular functions.

In this paper we focus on the case when $\varphi^t$ is an \emph{Anosov flow},
that is the tangent spaces to $M$ decompose into the flow, stable, and unstable directions~--
see~\S\ref{s:anosov-def} for a precise definition. There are many examples
of such flows, including geodesic flows on manifolds of negative curvature
(see~\S\ref{s:examples-geodesic}).
A key tool in studying long time asymptotics of correlations
is the \emph{Pollicott--Ruelle resolvent}
$$
R_X(\lambda)f=\int_0^\infty e^{-\lambda t}(f\circ\varphi^{-t})\,dt,\quad
\Re\lambda>0,\quad f\in C^\infty(M)
$$
with the integral converging in the space of continuous functions $C^0(M)$.
The integral $\int_M (R_X(\lambda)f)g$ is the Fourier--Laplace transform
of the correlation $\rho_{f,g}(t)$ at~$\lambda$. 

Since $X$ is a smooth vector field, differentiation along it
defines a first order differential operator
which we also denote by~$X$. 
This operator acts in particular on the space of smooth functions $C^\infty(M)$
and on the space of distributions $\mathcal D'(M)$.
Now, $R_X(\lambda)$ is an inverse of $X+\lambda$ in the following
sense:
\begin{equation}
  \label{e:inversor}
R_X(\lambda)(X+\lambda)f=(X+\lambda)R_X(\lambda)f=f\quad\text{for all}\quad
f\in C^\infty(M),\quad
\Re\lambda>0.
\end{equation}
A fundamental property of $R_X(\lambda)$ is that it continues
meromorphically to the entire complex plane:
%%%%%%%%%%%%%%%%%%%%%%%%%%%%%%%%%%%%%%%%%%%%%%%%%%%%%%%%%%%%%%%%%%%%%%%%%%%%%%%%
\begin{theo}
  \label{t:old}
Assume that $X$ is an Anosov flow. Then $R_X(\lambda)$ admits a meromorphic extension
$$
R_X(\lambda):C^\infty(M)\to\mathcal D'(M),\quad
\lambda\in\mathbb C.
$$
\end{theo}
%%%%%%%%%%%%%%%%%%%%%%%%%%%%%%%%%%%%%%%%%%%%%%%%%%%%%%%%%%%%%%%%%%%%%%%%%%%%%%%%
The poles of the extended family $R_X(\lambda)$, called the \emph{Pollicott--Ruelle resonances} of~$\varphi^t$, are the complex characteristic
frequencies governing the decay of correlations.
They also appear as singularities (zeroes and/or poles) of dynamical zeta functions.

A typical proof of Theorem~\ref{t:old} is to use~\eqref{e:inversor}
and construct the meromorphic continuation of $R_X(\lambda)$
as the inverse of $X+\lambda$ acting between two Banach spaces of distributions
$D\to H$ which are carefully designed so that $X+\lambda:D\to H$ is a Fredholm operator.
This gives the continuation to a half-plane $\Re\lambda> -c$ where
the value of the constant $c$ depends on the choice of the spaces,
and it is possible to choose $D,H$ to make $c$ arbitrarily large.

The present paper establishes a version of Theorem~\ref{t:old}
in the more general setting of a smooth vector bundle $\mathcal E$ over $M$
and an arbitrary lift $\mathbf X:C^\infty(M;\mathcal E)\to C^\infty(M;\mathcal E)$ of~$X$~--
see~\S\ref{s:more-general-operators} for details
and~\S\ref{s:examples-1} for examples.
It is already known that such an extension holds, however in this paper we compute
the needed regularity for the spaces on which Fredholm property holds.
This can be used in particular to better understand the regularity assumptions
for exponential decay of correlations as well as regularity of resonant states.

We use an anisotropic Sobolev space $H^{\mathfrak m}(M;\mathcal E)$ associated to 
a weight function $\mathfrak m\in C^\infty(T^*M\setminus 0;\mathbb R)$ which
is homogeneous of degree~0. This function needs to satisfy natural dynamical
assumptions (see~\S\ref{s:anisotropic-spaces}), in particular to it correspond
two numbers
$$
m_u\leq 0\leq m_s
$$
such that $H^{m_s}(M;\mathcal E)\subset H^{\mathfrak m}(M;\mathcal E)\subset H^{m_u}(M;\mathcal E)$. See Adam--Baladi~\cite[\S3.3]{Adam-Baladi} for the threshold regularity computation for the case of trivial one-dimensional bundles, giving~\eqref{e:threshold-total} in that case (see also Guillarmou--Poyferr\'e--Bon\-thonneau~\cite[Appendix~A]{Guillarmou-Poyferre}), Wang~\cite{Wang-Besov} for radial estimates
giving regularity in the more general Besov spaces in the scalar case,
and Bonthonneau--Lefeuvre~\cite{Bonthonneau-Lefeuvre} for a related result
giving the regularity threshold
in the case of general bundles for H\"older--Zygmund spaces.
For an estimate of the regularity threshold in anisotropic Banach spaces
in the related case of Anosov maps, see~\cite[Theorem~4.1]{Baladi-Tsujii-Spectra} or~\cite[Theorem~6.12]{Baladi-Book}.

The main result of this paper, Theorem~\ref{t:main} in~\S\ref{s:main},
shows meromorphic continuation of the Pollicott--Ruelle resolvent $R_{\mathbf X}(\lambda)$
associated to the lift $\mathbf X$ to a half-plane which is explicitly described
in terms of $m_u,m_s$ and the dynamics of the flow $\varphi^t$. More precisely,
the condition on $\lambda$ is that there exists $\varepsilon>0$ and a constant $C$
such that for all $x\in M$ and $t\geq 0$
\begin{equation}
\label{e:threshold-total}
\begin{aligned}
|\det d\varphi^t(x)|^{1\over 2}\cdot\|\mathscr T_{\mathbf X}^t(x)\|
\cdot \big\|d\varphi^t(x)|_{E_s}\big\|^{-m_u}&\leq C e^{(\Re\lambda-\varepsilon)t},\\
|\det d\varphi^t(x)|^{1\over 2}\cdot\|\mathscr T_{\mathbf X}^t(x)\|
\cdot \big\|d\varphi^t(x)^{-1}|_{E_u}\big\|^{m_s}&\leq C e^{(\Re\lambda-\varepsilon)t}.
\end{aligned}
\end{equation}
Here $E_u,E_s$ are the unstable/stable spaces of the flow
and $\mathscr T_{\mathbf X}^t(x):\mathcal E(x)\to\mathcal E(\varphi^t(x))$
is the parallel transport associated to the lift~$\mathbf X$.
See~\S\ref{s:threshold} and the statement of Theorem~\ref{t:main} for details
and~\S\ref{s:examples-2} for examples.

The use of anisotropic H\"older and Sobolev spaces to prove Theorem~\ref{t:old}
and an analogous statement in the related setting of Anosov maps
has a long tradition, see in particular the works of
Blank--Keller--Liverani~\cite{Blank-Keller-Liverani},
Liverani~\cite{Liverani-contact,Liverani-Fredholm},
Gouezel--Liverani~\cite{Gouezel-Liverani},
Baladi--Tsujii~\cite{Baladi-Tsujii}, and
Butterley--Liverani~\cite{Butterley-Liverani}. We use the \emph{microlocal approach} originating in the papers of Faure--Roy--Sj\"ostrand~\cite{Faure-Roy-Sjostrand} and Faure--Sj\"ostrand~\cite{Faure-Sjostrand}.
See the review of Zworski~\cite[\S4]{Zworski-resonances-review} for a comprehensive introduction
to this microlocal approach.
Our proof is similar in structure to the one in the paper of Dyatlov--Zworski~\cite{DZ-zeta}
on dynamical zeta functions. (See also the work of Dyatlov--Guillarmou~\cite{DG-open,DG-afterword} for the more
general setting of basic sets of Axiom A flows.)
The main difference between the present paper and~\cite{DZ-zeta}
is the precise analysis of what regularity is needed for radial estimates~--
see~\S\S\ref{s:adj-comm}, \ref{s:threshold}, and~\ref{s:radial}.

We also address a minor mistake present in~\cite{DZ-zeta,DG-open}:
when the vector bundle $\mathcal E$ is not trivial, it is not possible to extend pseudodifferential operators on $C^\infty(M)$ canonically to operators
on $C^\infty(M;\mathcal E)$. Thus all the pseudodifferential cutoffs $A,B,B_1,\dots$
used in the propagation estimates in~\cite{DZ-zeta,DG-open} should be taken to
be principally scalar operators rather than operators on $C^\infty(M)$.

For applications of anisotropic spaces to exponential mixing for contact flows,
see the works of Liverani~\cite{Liverani-contact}, Tsujii~\cite{Tsujii-contact},
and Nonnenmacher--Zworski~\cite{Nonnenmacher-Zworski-Inventiones}. We note that the latter paper~\cite{Nonnenmacher-Zworski-Inventiones}
uses the microlocal approach and thus could be potentially combined with the present
result to yield exponential mixing for more general bundles, however
in the case when $\mathbf X^*\neq -\mathbf X$ more adjustments would be needed to the argument there.

%%%%%%%%%%%%%%%%%%%%%%%%%%%%%%%%%%%%%%%%%%%%%%%%%%%%%%%%%%%%%%%%%%%%%%%%%%%%%%%%
%%%%%%%%%%%%%%%%%%%%%%%%%%%%%%%%%%%%%%%%%%%%%%%%%%%%%%%%%%%%%%%%%%%%%%%%%%%%%%%%
\section{Anosov flows}

%%%%%%%%%%%%%%%%%%%%%%%%%%%%%%%%%%%%%%%%%%%%%%%%%%%%%%%%%%%%%%%%%%%%%%%%%%%%%%%%
\subsection{Definition}
\label{s:anosov-def}

As in the introduction, we assume that $X$ is a nonvanishing vector
field on a compact manifold $M$ and $\varphi^t=\exp(tX)$
is the corresponding flow.
%%%%%%%%%%%%%%%%%%%%%%%%%%%%%%%%%%%%%%%%%%%%%%%%%%%%%%%%%%%%%%%%%%%%%%%%%%%%%%%%
\begin{defi}
We say that $\varphi^t$ is an \emph{Anosov flow} if
there exists a splitting of tangent spaces into
the \emph{flow/unstable/stable spaces}
\begin{equation}
  \label{e:stun-original}
T_xM=E_0(x)\oplus E_u(x)\oplus E_s(x),\qquad
x\in M
\end{equation}
such that:
\begin{itemize}
\item $E_0(x)=\mathbb RX(x)$;
\item $E_u(x),E_s(x)$ depend continuously on~$x$ and are
invariant under the flow:
$$
d\varphi^t(x)E_u(x)=E_u(\varphi^t(x)),\quad
d\varphi^t(x)E_s(x)=E_s(\varphi^t(x));
$$
\item we have the exponential contraction property under the differential of the flow,
\begin{equation}
  \label{e:Anosov-contraction}
|d\varphi^t(x)v|\leq Ce^{-\theta|t|}|v|\quad\text{if}\quad
\begin{cases}
v\in E_u(x),& t\leq 0\quad\text{or}\\
v\in E_s(x),& t\geq 0.
\end{cases}
\end{equation}
Here $C,\theta>0$ are some constants and we fix an arbitrary
Riemannian metric on~$M$; $C$ depends on the choice of the metric but $\theta$
does not.
\end{itemize}
\end{defi}
%%%%%%%%%%%%%%%%%%%%%%%%%%%%%%%%%%%%%%%%%%%%%%%%%%%%%%%%%%%%%%%%%%%%%%%%%%%%%%%%
\Remark The dependence of $E_u(x),E_s(x)$ on the base point~$x$
is H\"older continuous but typically not $C^\infty$,
see for example~\cite{Hurder-Katok}. 

In this paper we always assume that $\varphi^t$ is an Anosov flow.
It is sometimes useful to make additional assumptions, given by
%%%%%%%%%%%%%%%%%%%%%%%%%%%%%%%%%%%%%%%%%%%%%%%%%%%%%%%%%%%%%%%%%%%%%%%%%%%%%%%%
\begin{defi}
\label{d:contact-def}
Let $X$ be a nonvanishing vector field on a manifold~$M$. We say that the flow
$\varphi^t=\exp(tX)$ is:
\begin{itemize}
\item a \emph{volume preserving} flow, if there exists a $C^\infty$ density $\mu$ on $M$
which is invariant under pullback by $\varphi^t$;
\item a \emph{contact} flow, if $d=\dim M$ is odd and there exists a 1-form $\alpha\in C^\infty(M;T^*M)$ such that $\alpha\wedge (d\alpha)^{d-1\over 2}$ is nonvanishing,
$\iota_X \alpha=1$, and $\iota_X d\alpha=0$.
\end{itemize}
\end{defi}
%%%%%%%%%%%%%%%%%%%%%%%%%%%%%%%%%%%%%%%%%%%%%%%%%%%%%%%%%%%%%%%%%%%%%%%%%%%%%%%%
\Remark For contact flows, the form $\alpha$ is called a \emph{contact form} and
$X$ is called the \emph{Reeb vector field} associated to $\alpha$.
The manifold $M$ is oriented by requiring that
$d\vol_\alpha:=\alpha\wedge (d\alpha)^{d-1\over 2}$
be positive. Moreover, $d\vol_\alpha$ is invariant under the flow $\varphi^t$,
so contact flows are always volume preserving.

%%%%%%%%%%%%%%%%%%%%%%%%%%%%%%%%%%%%%%%%%%%%%%%%%%%%%%%%%%%%%%%%%%%%%%%%%%%%%%%%
\subsection{Examples}
\label{s:examples-geodesic}

We now give a few standard examples of Anosov flows.

%%%%%%%%%%%%%%%%%%%%%%%%%%%%%%%%%%%%%%%%%%%%%%%%%%%%%%%%%%%%%%%%%%%%%%%%%%%%%%%%
\subsubsection{Geodesic flows}

Assume that $(\Sigma,g)$ is a compact Riemannian manifold.
We let $M$ be the sphere bundle of $\Sigma$:
$$
M=S\Sigma:=\{(y,w)\in T\Sigma\colon |w|_g=1\}.
$$
The geodesic flow $\varphi^t$ is the flow
on~$M$ defined as follows: if $(y,w)\in S\Sigma$ and
$\gamma:\mathbb R\to \Sigma$ is the geodesic such that
$\gamma(0)=y$, $\dot\gamma(0)=w$, then
$\varphi^t(y,w)=(\gamma(t),\dot\gamma(t))$.
The flow $\varphi^t$ is a contact flow, where
the contact 1-form $\alpha$ on $S\Sigma$ is defined as follows:
$$
\alpha_{(y,w)}(\xi)=\langle d\pi_{(y,w)}\xi, w\rangle_g
$$
where $\pi:S\Sigma\to\Sigma$ is the projection map~-- see for example~\cite[\S1.3.3]{Paternain-Book}.

%%%%%%%%%%%%%%%%%%%%%%%%%%%%%%%%%%%%%%%%%%%%%%%%%%%%%%%%%%%%%%%%%%%%%%%%%%%%%%%%
\begin{prop}
If $(\Sigma,g)$ has everywhere negative sectional curvature, then
the geodesic flow $\varphi^t$ on $M=S\Sigma$ is Anosov.
\end{prop}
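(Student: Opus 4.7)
The plan is to identify the differential of the geodesic flow with the Jacobi field flow and then exploit strict negativity of curvature to produce exponential contraction. First I would set up the Sasaki decomposition of $T(S\Sigma)$: at $(y,w)\in S\Sigma$ one writes
$$
T_{(y,w)}(S\Sigma)=\mathbb R X(y,w)\oplus \mathcal H(y,w)\oplus\mathcal V(y,w),
$$
where $\mathcal H,\mathcal V$ are the horizontal and vertical subspaces defined via the Levi-Civita connection, each canonically identified with $w^\perp\subset T_y\Sigma$. Under this identification a tangent vector corresponds to a pair $(\xi,\eta)\in w^\perp\times w^\perp$, and one has $d\varphi^t(y,w)(\xi,\eta)=(J(t),\nabla_{\dot\gamma}J(t))$, where $J$ is the Jacobi field along $\gamma(t)=\pi(\varphi^t(y,w))$ with $J(0)=\xi$ and $\nabla_{\dot\gamma}J(0)=\eta$. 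This reduces everything to the Jacobi equation $\nabla_{\dot\gamma}^2 J+R(J,\dot\gamma)\dot\gamma=0$.

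Next I would construct the stable and unstable subspaces by a horospherical limiting procedure. For each $T>0$ and $\xi\in w^\perp$ let $J_T^\xi$ be the unique perpendicular Jacobi field with $J_T^\xi(0)=\xi$ and $J_T^\xi(T)=0$. A Riccati comparison argument in negative sectional curvature shows that $\nabla_{\dot\gamma}J_T^\xi(0)$ converges as $T\to+\infty$ to a vector $U_s(\xi)\in w^\perp$, giving a linear map $U_s:w^\perp\to w^\perp$; the symmetric construction with $T\to-\infty$ yields $U_u$. Setting
$$
E_{s/u}(y,w)=\{(\xi,U_{s/u}(\xi)):\xi\in w^\perp\},
$$
I would verify that these subspaces depend continuously on the base point, are invariant under $d\varphi^t$ (a consequence of the fact that shifting the parameter $T$ by $t$ corresponds to applying the flow), and together with $\mathbb R X$ span $T_{(y,w)}(S\Sigma)$.

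The contraction estimate \eqref{e:Anosov-contraction} is then obtained from the Riccati equation for the shape operator $U(t):\dot\gamma(t)^\perp\to\dot\gamma(t)^\perp$ of the stable (resp.\ unstable) horosphere along $\gamma$: one has $\nabla_{\dot\gamma}U+U^2+R(\cdot,\dot\gamma)\dot\gamma=0$, and strict negativity of sectional curvature on the compact manifold $\Sigma$ yields a uniform lower bound $-\langle R(v,\dot\gamma)\dot\gamma,v\rangle\geq \kappa_0|v|^2$ for unit $\dot\gamma$ and $v\perp\dot\gamma$. Standard Riccati comparison then forces the stable $U$ to be negative definite with eigenvalues uniformly bounded above by some $-\theta<0$, which translates into $|J(t)|\leq Ce^{-\theta t}|J(0)|$ for $J\in E_s$ and $t\geq 0$; the symmetric statement yields exponential growth on $E_u$. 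I expect the main difficulty to lie in the construction step: establishing existence, continuity, and transversality of the horospherical limits $U_s,U_u$ requires combining strict negativity of the curvature with compactness of $\Sigma$ to promote the pointwise bounds coming from the Riccati equation to uniform ones.
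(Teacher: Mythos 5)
Your proposal is correct and follows the standard Jacobi field route: identify $d\varphi^t$ with the Jacobi flow in the Sasaki splitting, build $E_{s/u}$ as horospherical limits, and get uniform contraction from Riccati comparison under a pinching $-\kappa_1\leq K\leq-\kappa_0<0$; this is essentially the argument in the reference the paper cites for this proposition (Klingenberg, Theorem~3.9.1), since the paper itself gives no proof. The only point worth making explicit is that the Anosov estimate concerns the full tangent vector $(J(t),\nabla_{\dot\gamma}J(t))$, not just $|J(t)|$; this follows from your setup because on $E_{s/u}$ one has $\nabla_{\dot\gamma}J=U_{s/u}(t)J$ with $\|U_{s/u}\|$ uniformly bounded by the curvature bounds, so $|(J,\nabla_{\dot\gamma}J)|$ is comparable to $|J|$.
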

%%%%%%%%%%%%%%%%%%%%%%%%%%%%%%%%%%%%%%%%%%%%%%%%%%%%%%%%%%%%%%%%%%%%%%%%%%%%%%%%
For the proof, see for example~\cite[Theorem~3.9.1]{Klingenberg-Book}.

%%%%%%%%%%%%%%%%%%%%%%%%%%%%%%%%%%%%%%%%%%%%%%%%%%%%%%%%%%%%%%%%%%%%%%%%%%%%%%%%
\subsubsection{Suspensions of Anosov maps}

An Anosov map is a discrete time analog of an Anosov flow:
%%%%%%%%%%%%%%%%%%%%%%%%%%%%%%%%%%%%%%%%%%%%%%%%%%%%%%%%%%%%%%%%%%%%%%%%%%%%%%%%
\begin{defi}
Let $\widetilde M$ be a compact manifold and $T:\widetilde M\to\widetilde M$
be a diffeomorphism. We say that $T$ is an \emph{Anosov map}
if the tangent spaces to $\widetilde M$ admit a decomposition
$T_x\widetilde M=E_u(x)\oplus E_s(x)$ which is invariant under $T$,
depends continuously on~$x$, and satisfies the following exponential contraction property
for some constants $C,\theta>0$ and a Riemannian metric on~$\widetilde M$:
$$
|dT^k(x)v|\leq Ce^{-\theta |k|}|v|\quad\text{if}\quad\begin{cases}
v\in E_u(x),& k\leq 0\quad\text{or}\\
v\in E_s(x),& k\geq 0.
\end{cases}
$$
\end{defi}
%%%%%%%%%%%%%%%%%%%%%%%%%%%%%%%%%%%%%%%%%%%%%%%%%%%%%%%%%%%%%%%%%%%%%%%%%%%%%%%%
Basic examples of Anosov maps are the toric automorphisms
$$
T:\mathbb T^d\to\mathbb T^d,\quad
T(x)=Ax\bmod \mathbb Z^d
$$
where $\mathbb T^d=\mathbb R^d/\mathbb Z^d$ is the $d$-dimensional
torus and the matrix $A\in \GL(d,\mathbb Z)$, $|\det A|=1$, has no eigenvalues
on the unit circle.

To make an Anosov map into an Anosov flow, we use suspensions.
Let $T:\widetilde M\to\widetilde M$ be an Anosov map
and $\tau:\widetilde M\to (0,\infty)$ be a smooth function,
called the \emph{roof function} of the suspension.
Let $M$ be the manifold obtained by taking the cylinder
$\{(x,s)\mid x\in \widetilde M,\ 0\leq s\leq \tau(x)\}$
and gluing its two ends by identifying
$(x,\tau(x))$ with $(T(x),0)$. Alternatively, we may define
$M$ as the quotient of $\widetilde M\times\mathbb R$
by the action of $\mathbb Z$ generated by the map
$(x,s+\tau(x))\mapsto (T(x),s)$.
Now, the vector field $X:=\partial_s$ is well-defined on~$M$ and generates an Anosov flow called the \emph{suspension}
of $T$ with roof function $\tau$. Here the Anosov property is easy to check
when $\tau$ is constant and the general case is obtained by a time
change, which does not change the Anosov property~-- see for example~\cite[Proposition~17.4.5]{Katok-Hasselblatt}.

%%%%%%%%%%%%%%%%%%%%%%%%%%%%%%%%%%%%%%%%%%%%%%%%%%%%%%%%%%%%%%%%%%%%%%%%%%%%%%%%
\subsection{Operators and resolvents}

Let $\varphi^t=\exp(tX)$ be an Anosov flow on a manifold~$M$. The vector field
$X$ defines a first order differential operator $X:C^\infty(M)\to C^\infty(M)$.
For $t\in\mathbb R$, define the operator
$$
e^{-tX}:C^\infty(M)\to C^\infty(M),\quad
e^{-tX}f:=f\circ \varphi^{-t}.
$$
The notation $e^{-tX}$ is justified as follows:
for each $f\in C^\infty(M)$ we have
\begin{equation}
\label{e:evolver}
\partial_t (e^{-tX}f)=-e^{-tX} Xf=-Xe^{-tX}f.
\end{equation}
Now, for a complex number $\lambda$ such that
$\Re\lambda>0$ we define the Pollicott--Ruelle resolvent
\begin{equation}
  \label{e:P-R-resolvent}
R_X(\lambda)f:=\int_0^\infty e^{-\lambda t}e^{-tX}f\,dt.
\end{equation}
Here the integral converges exponentially fast in the sup-norm when $f$ is continuous.

We have the identity~\eqref{e:inversor}. Indeed, take $f\in C^\infty(M)$
and assume that $\Re\lambda>0$. Then
$$
R_X(\lambda)(X+\lambda)f=(X+\lambda)R_X(\lambda)f=-\int_0^\infty \partial_t(e^{-\lambda t}e^{-tX}f)\,dt
=f
$$
where in the second equality we consider $X+\lambda$ as a differential operator
on distributions.

%%%%%%%%%%%%%%%%%%%%%%%%%%%%%%%%%%%%%%%%%%%%%%%%%%%%%%%%%%%%%%%%%%%%%%%%%%%%%%%%
\subsubsection{More general operators}
\label{s:more-general-operators}

We now extend the definition of Pollicott--Ruelle resolvent to more general operators.
Let $\mathcal E$ be a (finite dimensional complex) $C^\infty$ vector bundle over $M$.
Denote by $C^\infty(M;\mathcal E)$ the space of smooth sections of $\mathcal E$.
%%%%%%%%%%%%%%%%%%%%%%%%%%%%%%%%%%%%%%%%%%%%%%%%%%%%%%%%%%%%%%%%%%%%%%%%%%%%%%%%
\begin{defi}
\label{d:lift}
An operator $\mathbf X:C^\infty(M;\mathcal E)\to C^\infty(M;\mathcal E)$
is called a \emph{lift} of the vector field $X$ to $\mathcal E$
if
\begin{equation}
\mathbf X(f\mathbf u)=(Xf)\mathbf u+f(\mathbf X\mathbf u)\quad\text{for all}\quad
f\in C^\infty(M;\mathbb C),\quad
u\in C^\infty(M;\mathcal E).
\end{equation}
\end{defi}
%%%%%%%%%%%%%%%%%%%%%%%%%%%%%%%%%%%%%%%%%%%%%%%%%%%%%%%%%%%%%%%%%%%%%%%%%%%%%%%%
If we fix a local frame $\mathbf e_1,\dots,\mathbf e_n\in C^\infty(U;\mathcal E)$
on~$\mathcal E$, where $U\subset M$ is an open set, then lifts of $X$
have the form
\begin{equation}
  \label{e:Xbf-form}
\mathbf X\sum_{j=1}^n f_j(x) \mathbf e_j(x)=\sum_{j=1}^n\bigg(Xf_j(x)+\sum_{k=1}^n A_{jk}(x)f_k(x)
\bigg)\mathbf e_j(x),\quad x\in U
\end{equation}
for all $f_1,\dots,f_n\in C^\infty(M;\mathbb C)$ where $(A_{jk}(x))$ is an $n\times n$ complex matrix with entries which are smooth functions
on~$U$.

We next define parallel transport on $\mathcal E$.
Let $x_0\in M$ and define the curve $x(t):=\varphi^t(x_0)$.
Assume that $\mathbf v(t)\in \mathcal E(x(t))$, $t\in\mathbb R$, is a smooth
section of the pullback of $\mathcal E$ to the curve $x(t)$.
We define the derivative
$D_{\mathbf X} \mathbf v(t)\in \mathcal E(x(t))$
by requiring that
$$
D_{\mathbf X}(\mathbf u(x(t)))=\mathbf X\mathbf u(x(t))\quad\text{for all}\quad
\mathbf u\in C^\infty(M;\mathcal E).
$$
In a local frame we can write
\begin{equation}
  \label{e:covder-cor}
D_{\mathbf X}\sum_{j=1}^n f_j(t)\mathbf e_j(x(t))=\sum_{j=1}^n \bigg(\dot f_j(t)
+\sum_{k=1}^n A_{jk}(x(t))f_k(t)\bigg)\mathbf e_j(x(t)).
\end{equation}
We say that $\mathbf v(t)$ is \emph{parallel} if $D_{\mathbf X}\mathbf v(t)=0$
for all $t$. Using the coordinate expression~\eqref{e:covder-cor}
and the existence/uniqueness theorem for linear systems of ODEs, we see
that for each $\mathbf v_0\in\mathcal E(x(0))$ there exists
a unique parallel field $\mathbf v(t)$ such that $\mathbf v(0)=\mathbf v_0$.
We then define the \emph{parallel transport operator}
\begin{equation}
  \label{e:parallel-def}
\mathscr T_{\mathbf X}^t(x):\mathcal E(x)\to\mathcal E(\varphi^t(x)),\quad
t\in\mathbb R
\end{equation}
such that for any parallel field $\mathbf v(t)$
we have $\mathscr T_{\mathbf X}^t(x(0))\mathbf v(0)=\mathbf v(t)$.

We now define the family of operators
$$
e^{-t\mathbf X}:C^\infty(M;\mathcal E)\to C^\infty(M;\mathcal E),\quad
t\in\mathbb R
$$
so that the evolution equation~\eqref{e:evolver} holds with $X$ replaced by $\mathbf X$.
In terms of parallel transport it can be described as follows:
for each $\mathbf u\in C^\infty(M;\mathcal E)$ and $x\in M$ we have
\begin{equation}
  \label{e:evolution-parallel}
e^{-t\mathbf X}\mathbf u(x)=\mathscr T_{\mathbf X}^t(\varphi^{-t}(x))\mathbf u(\varphi^{-t}(x)).
\end{equation}
We now want to define the Pollicott--Ruelle resolvent of $\mathbf X$ similarly
to~\eqref{e:P-R-resolvent}. For that fix an inner product on the fibers of $\mathcal E$
and take constants $C_{\mathbf X},C_1$ such that
$$
\|\mathscr T_{\mathbf X}^t(x)\|_{\mathcal E(x)\to\mathcal E(\varphi^t(x))}
\leq C_1 e^{C_{\mathbf X}t}\quad\text{for all}\quad
t\geq 0,\quad
x\in M.
$$
Note that the constant $C_1$ depends on the choice of the inner product but
$C_{\mathbf X}$ does not. Now we define
\begin{equation}
  \label{e:P-R-resolvent-gen}
R_{\mathbf X}(\lambda)\mathbf u:=\int_0^\infty e^{-\lambda t}e^{-t\mathbf X}\mathbf u\,dt\quad\text{for}\quad
\Re\lambda>C_{\mathbf X},\quad
\mathbf u\in C^\infty(M;\mathcal E).
\end{equation}
The integral converges in the space of continuous functions $C^0(M;\mathcal E)$.
We have the identities similar to~\eqref{e:inversor}:
\begin{equation}
  \label{e:inversor-gen}
R_{\mathbf X}(\lambda)(\mathbf X+\lambda)\mathbf u
=(\mathbf X+\lambda)R_{\mathbf X}(\lambda)\mathbf u=\mathbf u\quad\text{for all}\quad
\mathbf u\in C^\infty(M;\mathcal E),\quad
\Re\lambda>C_{\mathbf X}.
\end{equation}

%%%%%%%%%%%%%%%%%%%%%%%%%%%%%%%%%%%%%%%%%%%%%%%%%%%%%%%%%%%%%%%%%%%%%%%%%%%%%%%%
\subsubsection{Examples}
\label{s:examples-1}

We now give several natural examples of lifts $\mathbf X$.
First of all, if $\mathcal E=M\times\mathbb C$ is the trivial
line bundle over~$M$, then lifts of $X$ have the form
$$
\mathbf X=X+V\quad\text{for some potential}\quad
V\in C^\infty(M;\mathbb C).
$$
The operator $e^{-t\mathbf X}$ is given by
$$
e^{-t\mathbf X}u(x)=\exp\bigg(-\int_0^t V(\varphi^{-s}(x))\,ds\bigg)u(\varphi^{-t}(x)).
$$
The next example is given by the bundles of differential forms
$$
\Omega^k:=\wedge^k T^*M
$$
and $\mathbf X:=\mathcal L_X$ is the Lie derivative.
In this case the operator $e^{-t\mathbf X}$ is the pullback of differential forms
by $\varphi^{-t}$.

One can also consider the smaller bundle of perpendicular forms
$$
\Omega^k_0:=\{\mathbf u\in \Omega^k\mid \iota_X\mathbf u=0\}
$$
with the same operator $\mathbf X=\mathcal L_X$, which is important for the analysis
of the Ruelle zeta function (see for example~\cite{DZ-zeta}).

We can consider a more general setting by taking a complex vector bundle $\mathcal V$
over $M$ equipped with a flat connection $\nabla$, considering the bundle
$\mathcal E:=\Omega^k\otimes\mathcal V$, and putting
$$
\mathbf X:=\mathcal L_{X,\nabla}=d^\nabla\iota_X+\iota_X d^\nabla
$$
where $d^\nabla:C^\infty(\Omega^k\otimes\mathcal V)\to C^\infty(\Omega^{k+1}\otimes\mathcal V)$
is the twisted exterior derivative associated to $\nabla$.
The resulting Pollicott--Ruelle resonances have important applications
to \emph{Fried's conjecture} relating dynamical zeta functions and torsion~-- see for example Dang--Guillarmou--Rivi\`ere--Shen~\cite[\S3.3]{DGRS}.

A special case of the flat connection example above is when $\mathcal E$
is the orientation bundle of the bundle $E_s$. This bundle can be used to
generalize known results on meromorphic continuation of dynamical zeta functions 
to the case of nonorientable $E_s$~-- see Borns-Weil--Shen~\cite{Borns-Weil-Shen}.

%%%%%%%%%%%%%%%%%%%%%%%%%%%%%%%%%%%%%%%%%%%%%%%%%%%%%%%%%%%%%%%%%%%%%%%%%%%%%%%%
%%%%%%%%%%%%%%%%%%%%%%%%%%%%%%%%%%%%%%%%%%%%%%%%%%%%%%%%%%%%%%%%%%%%%%%%%%%%%%%%
\section{Microlocal framework and the lifted flow}
  \label{s:microlocal-framework}

In this and the next section we assume that $\varphi^t=e^{tX}$
is an Anosov flow on a compact manifold~$M$, $\mathcal E$ is a vector bundle over $M$,
and $\mathbf X:C^\infty(M;\mathcal E)\to C^\infty(M;\mathcal E)$ is a lift of~$X$
in the sense of Definition~\ref{d:lift}. (In particular, this includes the
special scalar case when $\mathcal E=M\times\mathbb C$
and $\mathbf X=X$.)

We henceforth fix a density $\rho_0$ on $M$ and an Hermitian inner product $\langle\bullet,\bullet\rangle_{\mathcal E}$ on the fibers
of~$\mathcal E$, which together fix the inner product on the space
$L^2(M;\mathcal E)$.

We use the semiclassically rescaled version of $\mathbf X$,
$$
\mathbf P:=-ih\mathbf X.
$$
Here $h\in (0,1]$ is a small number called the \emph{semiclassical parameter}. In the
present paper the semiclassical rescaling is a technical tool useful in the proof of the meromorphic
continuation of the Pollicott--Ruelle resolvent, and $h$ will be
ultimately fixed small enough (so that the $\mathcal O(h^\infty)$ remainders
in semiclassical estimates can be removed and Lemma~\ref{l:invertibility} holds). In applications to spectral
gaps (such as the work of Nonnenmacher--Zworski~\cite{Nonnenmacher-Zworski-Inventiones})
one has $h\approx |\Re\lambda|^{-1}$ and studies the limit $h\to 0$.

%%%%%%%%%%%%%%%%%%%%%%%%%%%%%%%%%%%%%%%%%%%%%%%%%%%%%%%%%%%%%%%%%%%%%%%%%%%%%%%%
\subsection{Semiclassical analysis}

We discuss the behavior of $\mathbf P$ from the point of view of microlocal analysis,
more precisely its semiclassical version.
We refer the reader to the book of Zworski~\cite{Zworski-Book} for an introduction to semiclassical analysis
and to the book of Dyatlov--Zworski~\cite[Appendix~E]{DZ-Book} (which builds on~\cite{Zworski-Book}) for some of the more advanced tools used here.

For $m\in\mathbb R$, denote by $S^m_h(T^*M)$ the class of $h$-dependent \emph{Kohn--Nirenberg symbols}
of order $m$ on the cotangent bundle $T^*M$, consisting of $h$-dependent functions
$a(x,\xi;h)\in C^\infty(T^*M)$ satisfying the derivative bounds for all multiindices
$\alpha,\beta$
$$
|\partial^\alpha_x\partial^\beta_\xi a(x,\xi;h)|\leq C_{\alpha\beta}\langle\xi\rangle^{m-|\beta|}\quad\text{for all}\quad (x,\xi)\in T^*M,\quad
0<h\leq 1.
$$
Here $\langle\xi\rangle:=\sqrt{1+|\xi|^2}$. This is the class used in~\cite[\S14.2.2]{Zworski-Book}. The estimates from the book~\cite{DZ-Book}, on which this paper relies,
use instead the smaller class of \emph{polyhomogeneous symbols}
with expansions in powers of $\xi$ and $h$, see~\cite[Definition~E.3]{DZ-Book}.
We will apply these estimates to the conjugated operator $\widetilde{\mathbf P}$
(see~\S\ref{s:conjugated-operator}) which is not polyhomogeneous
and we explain below why the results of~\cite{DZ-Book} still hold.

Denote by $\Psi^m_h(M)$ the class of semiclassical pseudodifferential operators
with symbols in~$S^m_h(T^*M)$. These are $h$-dependent families of operators
on $C^\infty(M)$ and on the space of distributions $\mathcal D'(M)$.
We refer to~\cite[\S14.2.2]{Zworski-Book} and~\cite[\S E.1.7]{DZ-Book} for details.
We use the \emph{semiclassical principal symbol} isomorphism
\begin{equation}
  \label{e:sigma-h-def-1}
\sigma_h:{\Psi^m_h(M)\over h\Psi^{m-1}_h(M)}\ \to\ {S^m_h(T^*M)\over hS^{m-1}_h(T^*M)}. 
\end{equation}
The space $T^*M$ is not compact because $\xi$ is allowed to go to infinity.
We will use the \emph{fiber-radial compactification} $\overline{T^*M}$
obtained by adding to $T^*M$ a sphere at infinity. See for example~\cite[\S E.1.3]{DZ-Book}
for details.

%%%%%%%%%%%%%%%%%%%%%%%%%%%%%%%%%%%%%%%%%%%%%%%%%%%%%%%%%%%%%%%%%%%%%%%%%%%%%%%%
\subsubsection{Operators on sections of vector bundles}

We now discuss the class of semiclassical pseudodifferential operators
$\Psi^m_h(M;\End(\mathcal E))$ acting on the space of sections
$C^\infty(M;\mathcal E)$ of the vector bundle $\mathcal E$.
If $\mathcal E$ is trivial and $n=\dim\mathcal E$, then operators
on $C^\infty(M;\mathcal E)$ are identified with
$n\times n$ matrices of operators on $C^\infty(M)$.
We say such a matrix is in $\Psi^m_h(M;\End(\mathcal E))$
if all of its entries are in $\Psi^m_h(M)$.
This class does not depend on the choice of a (smooth) trivialization
of $\mathcal E$ since composition with multiplication operators
maps $\Psi^m_h(M)$ into itself. Since pseudodifferential operators
are smoothing and rapidly decaying in $h$ away
from the diagonal, one can use the above definition locally
to make sense of $\Psi^m_h(M;\End(\mathcal E))$ for a general
bundle~$\mathcal E$.
See~\cite[Definition~18.1.32]{Hormander3} for more details (in the related
nonsemiclassical setting). Any element of $\Psi^m_h(M;\End(\mathcal E))$
is bounded uniformly in~$h$ in operator norm
$H^s_h(M;\mathcal E)\to H^{s-m}_h(M;\mathcal E)$
where $H^s_h(M;\mathcal E)$ denotes the semiclassical Sobolev
space defined similarly to~\cite[Definition~E.20]{DZ-Book}.

For $\mathbf A\in\Psi^m_h(M;\End(\mathcal E))$, we use the above procedure
and the map~\eqref{e:sigma-h-def-1} to define the semiclassical principal
symbol
$$
\sigma_h(\mathbf A)\in {S^m_h(T^*M;\End(\pi^*\mathcal E))\over hS^{m-1}_h(T^*M;\End(\pi^*\mathcal E))}.
$$
Here $\pi:T^*M\to M$ is the projection map,
$\pi^*\mathcal E$ is the pullback of $\mathcal E$ to a vector bundle
over $T^*M$, and $\End(\pi^*\mathcal E)$ is the bundle of homomorphisms
from $\pi^*\mathcal E$ to itself.
Note that $\sigma_h$ is surjective and $\sigma_h(\mathbf A)=0$ if and only if $\mathbf A\in h\Psi^{m-1}_h(M;\End(\mathcal E))$.

We say $\mathbf A$ is \emph{principally scalar} if $\sigma_h(\mathbf A)$ is scalar,
that is there exists $a\in S^m_h(T^*M)$ such that $\sigma_h(\mathbf A)=aI_{\pi^*\mathcal E}$
modulo $hS^{m-1}_h(T^*M;\End(\pi^*\mathcal E))$. In this case we treat
$\sigma_h(\mathbf A)$ as a scalar function on $T^*M$ by identifying it with
(the equivalence class of) $a$.

Using the standard algebraic properties of the scalar calculus $\Psi^m_h(M)$
(see for instance~\cite[Theorem~14.1]{Zworski-Book} and~\cite[Proposition~E.17]{DZ-Book})
we obtain the following properties of the calculus $\Psi^m_h(M;\End(\mathcal E))$:

\begin{itemize}
\item Product Rule: if $\mathbf A\in\Psi^m_h(M;\End(\mathcal E))$ and $\mathbf B\in\Psi^\ell_h(M;\End(\mathcal E))$,
then
\begin{equation}
  \label{e:product-rule}
\mathbf{AB}\in\Psi^{m+\ell}_h(M;\End(\mathcal E)),\qquad
\sigma_h(\mathbf {AB})=\sigma_h(\mathbf A)\sigma_h(\mathbf B)
\end{equation}
where the right-hand side is understood as composition of sections of $\End(\pi^*\mathcal E)$.
\item Commutator Rule: if $\mathbf A\in\Psi^m_h(M;\End(\mathcal E))$, $\mathbf B\in\Psi^\ell_h(M;\End(\mathcal E))$ are both \emph{principally scalar}, then,
with $\{\bullet,\bullet\}$ denoting the Poisson bracket on~$T^*M$,
\begin{equation}
  \label{e:commutator-general}
[\mathbf A,\mathbf B]\in h\Psi^{m+\ell-1}_h(M;\End(\mathcal E)),\qquad
\sigma_h(h^{-1}[\mathbf A,\mathbf B])=-i\{\sigma_h(\mathbf A),\sigma_h(\mathbf B)\}.
\end{equation}
\item Adjoint Rule: if $\mathbf A\in\Psi^m_h(M;\End(\mathcal E))$, then
its formal adjoint $\mathbf A^*$ satisfies
$$
{\mathbf A}^*\in \Psi^m_h(M;\End(\mathcal E)),\qquad
\sigma_h(\mathbf A^*)=\sigma_h(\mathbf A)^*
$$
where the right-hand side is defined using the adjoint operation
on $\End(\pi^*\mathcal E)$ induced by the inner product
$\langle\bullet,\bullet\rangle_{\mathcal E}$.
\end{itemize}
We next discuss the \emph{wavefront set} and the \emph{elliptic set}
of an operator $\mathbf A\in\Psi^m_h(M;\mathcal E)$. The wavefront set
$\WFh(\mathbf A)$ is a compact subset of $\overline{T^*M}$ giving the essential
support of the full symbol of~$\mathbf A$. In terms of the wavefront set
of scalar pseudodifferential operators (see for example~\cite[Definition~E.27]{DZ-Book}), we define $\WFh(\mathbf A)$
as the union of the wavefront sets of the entries of $\mathbf A$ as an $n\times n$
matrix of operators, with respect to any trivialization of~$\mathcal E$.

The elliptic set $\Ell_h(\mathbf A)$ is the open subset of $\overline{T^*M}$ on which the principal symbol $\sigma_h(\mathbf A)$ is essentially invertible
(as an endomorphism of~$\pi^*\mathcal E$).
More precisely, a point $(x_0,\xi_0)\in \overline{T^*M}$ lies in $\Ell_h(\mathbf A)$
if there exists a constant~$C$ such that we have $\big\|\big(\sigma_h(\mathbf A)(x,\xi)\big)^{-1}\big\|\leq C\langle\xi\rangle^{-m}$
for all sufficiently small~$h$ and all $(x,\xi)$ in some neighborhood of $(x_0,\xi_0)$
in $\overline{T^*M}$.

Finally, we give the following version of sharp G\r arding inequality
for pseudodifferential operators on vector bundles.
It is an analog of~\cite[Proposition~E.34]{DZ-Book}
but we restrict a simpler case, putting $B:=0$ and considering
a special subclass of nonnegative symbols in $C^\infty(T^*M;\End(\pi^*\mathcal E))$.
%%%%%%%%%%%%%%%%%%%%%%%%%%%%%%%%%%%%%%%%%%%%%%%%%%%%%%%%%%%%%%%%%%%%%%%%%%%%%%%%
\begin{lemm}
  \label{l:sharp-Garding}
Assume that $\mathbf A\in\Psi^{2m}_h(M;\End(\mathcal E))$ and
$\mathbf B_1\in\Psi^0_h(M;\End(\mathcal E))$ satisfy $\WFh(\mathbf A)\subset\Ell_h(\mathbf B_1)$.
Assume moreover that the principal symbol $\sigma_h(\mathbf A)$
has the form
\begin{equation}
  \label{e:symbol-assume-more}
\sigma_h(\mathbf A)=\chi\mathbf a,\quad
\chi\in C^\infty(\overline{T^*M}),\quad
\chi\geq 0,\quad
\mathbf a\in S^{2m}(T^*M;\End(\pi^*\mathcal E))
\end{equation}
where $\chi,\mathbf a$ are $h$-independent and
$\Re\mathbf a$ is uniformly positive definite
on some neighborhood $V\subset \overline{T^*M}$ of~$\supp\chi$, that is there exists a constant $c>0$ such that
$$
\Re\langle\mathbf a(x,\xi)\mathbf v,\mathbf v\rangle_{\mathcal E(x)}\geq c\langle\xi\rangle^{2m}\|\mathbf v\|_{\mathcal E(x)}^2\quad\text{for all}\quad
(x,\xi)\in V,\quad
\mathbf v\in\mathcal E(x).
$$
Then there exists a constant $C$ such that for each $N$, all small~$h$, and
all $\mathbf u\in H^m(M;\mathcal E)$
\begin{equation}
  \label{e:sharp-Garding}
\Re\langle \mathbf A\mathbf u,\mathbf u\rangle_{L^2(M;\mathcal E)}
\geq -Ch\|\mathbf B_1\mathbf u\|_{H^{m-{1\over 2}}_h}^2-\mathcal O(h^\infty)\|\mathbf u\|_{H^{-N}_h}^2.
\end{equation}
\end{lemm}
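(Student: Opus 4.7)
The plan is to sandwich $\mathbf{A}$ between two quantizations of a matrix square root of $\Re\mathbf{a}$ and to reduce the matrix-valued estimate to the scalar sharp G\r arding inequality applied to $\chi\geq 0$. Since $\Re\langle\mathbf{A}\mathbf{u},\mathbf{u}\rangle$ is unchanged when $\mathbf{A}$ is replaced by $(\mathbf{A}+\mathbf{A}^*)/2$, I may assume $\mathbf{A}=\mathbf{A}^*$; by the adjoint rule the new principal symbol is $\chi\cdot\Re\mathbf{a}$, so after this reduction $\mathbf{m}:=\Re\mathbf{a}$ is self-adjoint and uniformly positive definite on $V$.

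Next I construct the square root. On $V$ the holomorphic functional calculus for uniformly positive definite self-adjoint matrices produces a smooth self-adjoint positive square root $\mathbf{m}^{1/2}$; the uniform lower bound $\mathbf{m}\geq c\langle\xi\rangle^{2m}$ combined with the symbol bounds on $\mathbf{m}$ imply $\mathbf{m}^{1/2}\in S^m(V;\End(\pi^*\mathcal{E}))$. Pick $\psi\in C^\infty(\overline{T^*M};[0,1])$ equal to $1$ on $\supp\chi$ with $\supp\psi$ contained in a small neighborhood of $\supp\chi$ inside $V\cap\Ell_h(\mathbf{B}_1)$; this last inclusion uses $\supp\chi\subset\WFh(\mathbf{A})\subset\Ell_h(\mathbf{B}_1)$. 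Set $\mathbf{r}:=\psi\,\mathbf{m}^{1/2}\in S^m(T^*M;\End(\pi^*\mathcal{E}))$ and quantize it to $\mathbf{R}\in\Psi^m_h(M;\End(\mathcal{E}))$, symmetrized so that $\mathbf{R}=\mathbf{R}^*$. Let $\mathbf{C}_\chi\in\Psi^0_h(M;\End(\mathcal{E}))$ be a principally scalar quantization of $\chi$, obtained from a scalar quantization together with a partition of unity trivializing $\mathcal{E}$.

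By the product rule~\eqref{e:product-rule} the operator $\mathbf{R}\mathbf{C}_\chi\mathbf{R}\in\Psi^{2m}_h$ has principal symbol $\chi\mathbf{r}^2=\chi\psi^2\mathbf{m}=\chi\mathbf{m}=\sigma_h(\mathbf{A})$, so
$$
\mathbf{Q}:=\mathbf{A}-\mathbf{R}\mathbf{C}_\chi\mathbf{R}\in h\Psi^{2m-1}_h(M;\End(\mathcal{E}))
$$
with wavefront set inside $\Ell_h(\mathbf{B}_1)$. Writing
$$
\Re\langle\mathbf{A}\mathbf{u},\mathbf{u}\rangle=\Re\langle\mathbf{C}_\chi\mathbf{R}\mathbf{u},\mathbf{R}\mathbf{u}\rangle+\Re\langle\mathbf{Q}\mathbf{u},\mathbf{u}\rangle,
$$
I apply the scalar sharp G\r arding inequality (Proposition~E.34 of~\cite{DZ-Book}) to $\chi\geq 0$ to bound the first summand from below by $-Ch\|\mathbf{R}\mathbf{u}\|_{H^{-1/2}_h}^2-\mathcal{O}(h^\infty)\|\mathbf{R}\mathbf{u}\|_{H^{-N}_h}^2$. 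Since $\WFh(\mathbf{R})\subset\supp\psi\subset\Ell_h(\mathbf{B}_1)$, the microlocal elliptic estimate for $\mathbf{B}_1$ converts this into $-C'h\|\mathbf{B}_1\mathbf{u}\|_{H^{m-1/2}_h}^2-\mathcal{O}(h^\infty)\|\mathbf{u}\|_{H^{-N}_h}^2$. For the remaining error term, I insert a principally scalar cutoff $\mathbf{E}\in\Psi^0_h$ microsupported in $\Ell_h(\mathbf{B}_1)$ and equal to the identity on $\WFh(\mathbf{Q})$; then Cauchy--Schwarz together with the bound $\|\mathbf{Q}\|_{H^{m-1/2}_h\to H^{-(m-1/2)}_h}=\mathcal{O}(h)$ and the elliptic estimate for $\mathbf{E}\mathbf{u}$ give the same kind of upper bound.

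I expect the main technical point to be the symbol-class construction of $\mathbf{m}^{1/2}$: the uniform $S^m$ derivative bounds follow by differentiating the Dunford integral $\mathbf{m}^{1/2}=\frac{1}{2\pi i}\oint z^{1/2}(z-\mathbf{m})^{-1}\,dz$ around a contour encircling the spectrum of $\mathbf{m}$ and using the uniform positive definite lower bound to control both the contour and the matrix resolvent, but the multi-index bookkeeping is a little delicate. Once the square root is in hand, the remaining steps are routine symbolic calculus combined with the standard elliptic estimate for $\mathbf{B}_1$.
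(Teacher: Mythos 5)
Your proposal is correct and follows essentially the same route as the paper: reduce to a self-adjoint symbol, factor $\sigma_h(\mathbf A)=\chi\mathbf a$ through a smooth square root cut off inside $V\cap\Ell_h(\mathbf B_1)$, sandwich a (block-)scalar quantization of $\chi$ between quantizations of that square root, apply the scalar sharp G\r arding inequality to $\chi\geq 0$, and absorb the $h\Psi^{2m-1}_h$ discrepancy and the conversion to $\|\mathbf B_1\mathbf u\|_{H^{m-1/2}_h}$ via the elliptic estimate. The only cosmetic difference is your symmetric sandwich $\mathbf R\mathbf C_\chi\mathbf R$ in place of the paper's $\mathbf F^*\Op_h(\chi)\mathbf F$ with $\mathbf f=\chi'\sqrt{\mathbf a}$, which changes nothing of substance.
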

%%%%%%%%%%%%%%%%%%%%%%%%%%%%%%%%%%%%%%%%%%%%%%%%%%%%%%%%%%%%%%%%%%%%%%%%%%%%%%%%
\Remark In fact~\eqref{e:symbol-assume-more} can be replaced by the weaker and more
natural assumption that $\Re\sigma_h(\mathbf A)$ is nonnegative everywhere,
see~\cite[Remark~2 on p.79]{Hormander3} for the nonsemiclassical case.
Instead of establishing a semiclassical version of this result here, we choose to
make the stronger assumption~\eqref{e:symbol-assume-more} which allows us
to use the scalar sharp G\r arding inequality as a black box.
%%%%%%%%%%%%%%%%%%%%%%%%%%%%%%%%%%%%%%%%%%%%%%%%%%%%%%%%%%%%%%%%%%%%%%%%%%%%%%%%
\begin{proof}
If $\sigma_h(\mathbf A)=0$, then $\mathbf A\in h\Psi^{2m-1}_h(M;\End(\mathcal E))$
so~\eqref{e:sharp-Garding} holds
by the elliptic estimate (see~\S\ref{s:elliptic} below) since $\WFh(\mathbf A)\subset\Ell_h(\mathbf B_1)$. Therefore, we may replace $\mathbf A$ with any other operator with the same principal symbol and wavefront set contained in~$\Ell_h(\mathbf B_1)$.
Moreover, from the Adjoint Rule above we see
that one may replace $\mathbf a$ by $\Re\mathbf a:={1\over 2}(\mathbf a+\mathbf a^*)$.
We thus henceforth assume that $\mathbf a$ is self-adjoint.
Since $\WFh(\mathbf A)\subset\Ell_h(\mathbf B_1)$, we may also assume that
$\supp\chi\subset\Ell_h(\mathbf B_1)$.

Since $\mathbf a$ is positive definite on $V\Supset\supp\chi$, we may write
$$
\mathbf a=\mathbf f^*\mathbf f\quad\text{near }\supp\chi\quad\text{for some}\quad
\mathbf f\in S^m(T^*M;\End(\pi^*\mathcal E)),\quad
\supp\mathbf f\subset\Ell_h(\mathbf B_1).
$$
For example, we may take $\chi'\in C^\infty(\overline{T^*M})$ such that
$\chi'=1$ near $\supp \chi$ and $\supp\chi'\subset V\cap\Ell_h(\mathbf B_1)$, and put
$\mathbf f:=\chi'\sqrt{\mathbf a}$.

Using a partition of unity on~$\chi$, we reduce to a case when $\chi$ is supported
in some open set over which $\mathcal E$ is trivialized by some orthonormal frame.
Using that frame, we may consider the pseudodifferential
operator $\Op_h(\chi)\in \Psi^0_h(M)$ as
an operator on sections of $\mathcal E$. Take
$\mathbf F\in \Psi^m_h(T^*M;\End(\mathcal E))$ with principal symbol
$\mathbf f$ and $\WFh(\mathbf F)\subset\Ell_h(\mathbf B_1)$, then $\sigma_h(\mathbf A)=\sigma_h(\mathbf F^*\Op_h(\chi)\mathbf F)$,
so we may assume that $\mathbf A=\mathbf F^*\Op_h(\chi)\mathbf F$.
Now
$$
\begin{aligned}
\langle \mathbf A\mathbf u,\mathbf u\rangle_{L^2(M;\mathcal E)}
&=\langle\Op_h(\chi)\mathbf F\mathbf u,\mathbf F\mathbf u\rangle_{L^2(M;\mathcal E)}
\geq -Ch\|\mathbf F\mathbf u\|^2_{H^{-{1\over 2}}_h}\\
&\geq -Ch\|\mathbf B_1\mathbf u\|^2_{H^{m-{1\over 2}}_h}
-\mathcal O(h^\infty)\|\mathbf u\|_{H^{-N}_h}
\end{aligned}
$$
Here in the first inequality we use that $\chi\geq 0$ and 
apply the scalar sharp G\r arding inequality~\cite[Proposition~E.23]{DZ-Book} for the operator $\Op_h(\chi)$.
In the last inequality we use the elliptic estimate.
\end{proof}
%%%%%%%%%%%%%%%%%%%%%%%%%%%%%%%%%%%%%%%%%%%%%%%%%%%%%%%%%%%%%%%%%%%%%%%%%%%%%%%%

%%%%%%%%%%%%%%%%%%%%%%%%%%%%%%%%%%%%%%%%%%%%%%%%%%%%%%%%%%%%%%%%%%%%%%%%%%%%%%%%
\subsection{Semiclassical properties of $\mathbf P$}

The operator $\mathbf P=-ih\mathbf X$ is a semiclassical differential operator in the class $\Psi^1_h(M;\End(\mathcal E))$, as follows from~\eqref{e:Xbf-form}. It is principally
scalar with the principal symbol given by
$$
p(x,\xi):=\langle \xi,X(x)\rangle,\quad
x\in M,\quad
\xi\in T_x^*M.
$$
Note that $p$ is real valued and homogeneous of degree~1 in~$\xi$.

%%%%%%%%%%%%%%%%%%%%%%%%%%%%%%%%%%%%%%%%%%%%%%%%%%%%%%%%%%%%%%%%%%%%%%%%%%%%%%%%
\subsubsection{The lifted flow}
\label{s:dynamics}

For semiclassical estimates, it is important to understand the characteristic surface
$\{p=0\}\subset \overline{T^*M}$ and the Hamiltonian flow $e^{tH_p}$ on this surface.
For that we introduce the \emph{dual flow/unstable/stable decomposition}
of the fibers of the cotangent bundle $T^*M$:
\begin{equation}
  \label{e:stun-dual}
T^*_x M=E_0^*(x)\oplus E_u^*(x)\oplus E_s^*(x),\quad
x\in M
\end{equation}
which is defined in terms of the original flow/unstable/stable decomposition~\eqref{e:stun-original}
as follows:
$$
E_0^*:=(E_u\oplus E_s)^\perp,\qquad
E_u^*:=(E_0\oplus E_u)^\perp,\qquad
E_s^*:=(E_0\oplus E_s)^\perp.
$$
Any continuous subbundle of $T^*M$ can be considered as a closed subset of $\overline{T^*M}$,
and the characteristic surface of~$p$ is
$$
\{p=0\}=\{(x,\xi)\in \overline {T^*M}\mid \langle\xi,X(x)\rangle=0\}=E_u^*\oplus E_s^*.
$$
Next, the Hamiltonian flow of $p$ has the form
$$
e^{tH_p}(x,\xi)=(\varphi^t(x),d\varphi^t(x)^{-T}\xi)
$$
and extends to a smooth flow on $\overline{T^*M}$.
Here $d\varphi^t(x)^{-T}:T_x^* M\to T_{\varphi^t(x)}^*M$
is the inverse of the transpose of $d\varphi^t(x):T_xM\to T_{\varphi^t(x)}M$.

Following~\cite[(E.1.11)]{DZ-Book}, denote by
$$
\kappa:T^*M\setminus 0\to \partial\overline{T^*M}
$$
the canonical projection to fiber infinity $\partial\overline{T^*M}$.
Then $\kappa(E_u^*),\kappa(E_s^*)$ are compact subsets of $\partial\overline{T^*M}$
invariant under the flow $e^{tH_p}$.

The Anosov property~\eqref{e:Anosov-contraction} carries over to the decomposition~\eqref{e:stun-dual} as follows:
if $|\bullet|$ denotes some smooth norm on the fibers of $T^*M$, then
$$
|e^{tH_p}(x,\xi)|\leq Ce^{-\theta|t|}|\xi|\quad\text{if}\quad
\begin{cases}
\xi\in E_u^*(x),& t\leq 0\quad\text{or}\\
\xi\in E_s^*(x),& t\geq 0.
\end{cases}
$$
Moreover, if $\xi\in E_0^*(x)$, then $|e^{tH_p}(x,\xi)|\leq C|\xi|$ for all $t$.
This implies the following global dynamical properties
of the flow~$e^{tH_p}$ on $\overline{T^*M}$:
\begin{itemize}
\item if $(x,\xi)\in\overline{T^*M}\setminus(E_0^*\oplus E_s^*)$, then as $t\to\infty$,
$e^{tH_p}(x,\xi)$ converges to $\kappa(E_u^*)$ (in the topology of $\overline{T^*M}$) and $|e^{tH_p}(x,\xi)|\to\infty$ exponentially fast;
\item if $(x,\xi)\in\overline{T^*M}\setminus(E_0^*\oplus E_u^*)$, then
as $t\to-\infty$, $e^{tH_p}(x,\xi)$ converges to $\kappa(E_s^*)$
and $|e^{tH_p}(x,\xi)|\to\infty$ exponentially fast.
\end{itemize}
Indeed, to show for example the first statement we may write
$\xi=\xi_0+\xi_u+\xi_s$ where $\xi_0\in E_0^*(x)$, $\xi_u\in E_u^*(x)$,
$\xi_s\in E_s^*(x)$ and $\xi_u\neq 0$. Then as $t\to\infty$,
$e^{tH_p}(x,\xi_0+\xi_s)$ stays bounded while
$e^{tH_p}(x,\xi_u)$ grows exponentially and thus is the dominant
component of~$e^{tH_p}(x,\xi)$.

The above statements are locally uniform in $(x,\xi)$. They imply in particular
that $\kappa(E_u^*)$ is a \emph{radial sink} and $\kappa(E_s^*)$ is a \emph{radial source}
for the flow $e^{tH_p}$ in the sense of~\cite[Definition~E.50]{DZ-Book}.
They also give the following statement about the flow on the characteristic set:
%%%%%%%%%%%%%%%%%%%%%%%%%%%%%%%%%%%%%%%%%%%%%%%%%%%%%%%%%%%%%%%%%%%%%%%%%%%%%%%%
\begin{lemm}
\label{l:flow-char-global}
Fix arbitrary neighborhoods $V_u,V_s,V_0$ of $\kappa(E_u^*)$, $\kappa(E_s^*)$,
and the zero section in $\overline{T^*M}$. Let $(x,\xi)\in \{p=0\}\subset\overline{T^*M}$. Then:
\begin{itemize}
\item if $(x,\xi)\not\in \kappa(E_s^*)$,
then there exists $t\geq 0$ such that $e^{tH_p}(x,\xi)\in V_u\cup V_0$;
\item if $(x,\xi)\not\in \kappa(E_u^*)$, then
there exists $t\leq 0$ such that $e^{tH_p}(x,\xi)\in V_s\cup V_0$.
\end{itemize}
\end{lemm}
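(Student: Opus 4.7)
The plan is to reduce both bullets to the global dynamical dichotomy displayed just before the lemma. By reversing the time direction and swapping the roles of $E_u^*$ and $E_s^*$, it is enough to establish the first bullet. The overall strategy is a direct case analysis on where $(x,\xi)$ sits inside $\overline{T^*M}$, using the identification $\{p=0\}=E_u^*\oplus E_s^*$ (as a closed subset of $\overline{T^*M}$ that includes its fiber-infinity image $\kappa(E_u^*\oplus E_s^*\setminus 0)$). Throughout, I will write an arbitrary characteristic covector as $\xi=\xi_u+\xi_s$ with $\xi_u\in E_u^*(x)$, $\xi_s\in E_s^*(x)$, noting that the $E_0^*$ component vanishes because $\xi$ is characteristic.

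First, I would handle the case when $(x,\xi)\in T^*M$ lies at finite fiber. Such a point is automatically not in $\kappa(E_s^*)$, so nothing is assumed beyond the characteristic condition. If $\xi=0$, then $(x,\xi)\in V_0$ and $t=0$ works. If $\xi\neq 0$ and $\xi_u=0$, then $\xi\in E_s^*(x)$; the Anosov contraction on the dual decomposition (the displayed estimate just before the lemma) gives $|e^{tH_p}(x,\xi)|\leq Ce^{-\theta t}|\xi|\to 0$ as $t\to\infty$, so $e^{tH_p}(x,\xi)$ enters $V_0$. If $\xi_u\neq 0$, then $(x,\xi)\notin E_0^*\oplus E_s^*$, so the first bullet of the global dichotomy yields $e^{tH_p}(x,\xi)\to\kappa(E_u^*)$ in $\overline{T^*M}$, and large $t$ forces it into $V_u$.

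It remains to handle $(x,\xi)\in\partial\overline{T^*M}$, which is exactly where the hypothesis $(x,\xi)\notin\kappa(E_s^*)$ does work. Here $(x,\xi)=(x,\kappa(\eta))$ for some nonzero $\eta\in E_u^*(x)\oplus E_s^*(x)$, and the hypothesis translates, after noting that $\kappa$ collapses only positive rays, to $\eta\notin E_s^*(x)$, i.e.\ $\eta_u\neq 0$. Thus $(x,\eta)\notin E_0^*\oplus E_s^*$, and the first bullet of the global dichotomy again gives $e^{tH_p}(x,\xi)\to\kappa(E_u^*)\subset V_u$. The main obstacle is really just bookkeeping rather than substance: one needs to keep straight how $\kappa$ interacts with the decomposition $\xi=\xi_u+\xi_s$ at fiber infinity, and to recognize that the $V_0$ escape hatch is included in the statement precisely to accommodate finite covectors sitting entirely in $E_s^*$, which would otherwise not converge to $\kappa(E_u^*)$ under the forward flow.
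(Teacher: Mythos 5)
Your proof is correct and follows essentially the same route as the paper: both reduce to the first bullet by symmetry and then split into the case where the point has a nonzero $E_u^*$ component (so the global dichotomy sends it to $\kappa(E_u^*)\subset V_u$) and the case of a finite covector lying in $E_s^*$ (where the dual Anosov contraction sends it to the zero section, hence into $V_0$). Your extra bookkeeping at fiber infinity and the explicit $\xi=0$ subcase are just a finer organization of the same argument.
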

%%%%%%%%%%%%%%%%%%%%%%%%%%%%%%%%%%%%%%%%%%%%%%%%%%%%%%%%%%%%%%%%%%%%%%%%%%%%%%%%
\begin{proof}
We only show the first statement.
If $(x,\xi)\not\in E_s^*$ then, since $(x,\xi)\in \{p=0\}$, we have
$(x,\xi)\not\in E_0^*\oplus E_s^*$, so $e^{tH_p}(x,\xi)$ converges to $\kappa(E_u^*)$
as $t\to\infty$. Thus $e^{tH_p}(x,\xi)\in V_u$ for $t\geq 0$ large enough.
Now, if $(x,\xi)\in E_s^*$ and $\xi$ is finite, then
$e^{tH_p}(x,\xi)$ converges to the zero section as $t\to\infty$.
Thus $e^{tH_p}(x,\xi)\in V_0$ for $t\geq 0$ large enough.
\end{proof}
%%%%%%%%%%%%%%%%%%%%%%%%%%%%%%%%%%%%%%%%%%%%%%%%%%%%%%%%%%%%%%%%%%%%%%%%%%%%%%%%

%%%%%%%%%%%%%%%%%%%%%%%%%%%%%%%%%%%%%%%%%%%%%%%%%%%%%%%%%%%%%%%%%%%%%%%%%%%%%%%%
\subsubsection{Weight functions}

The dynamical properties of the flow $e^{tH_p}$ discussed in~\S\ref{s:dynamics}
make it possible to construct weight functions decaying along this flow,
which are used later to define the anisotropic Sobolev spaces:
%%%%%%%%%%%%%%%%%%%%%%%%%%%%%%%%%%%%%%%%%%%%%%%%%%%%%%%%%%%%%%%%%%%%%%%%%%%%%%%%
\begin{lemm}
\label{l:weight}
Fix some real numbers $m_u\leq m_0\leq m_s$ and conic neighborhoods
$V_u,V_s\subset T^*M\setminus 0$ of $E_u^*,E_s^*$.
Then there exists a function $\mathfrak m\in C^\infty(T^*M\setminus 0;\mathbb R)$ such that:
\begin{itemize}
\item $\mathfrak m(x,\xi)$ is positively homogeneous of degree~0 in~$\xi$;
\item $m_u\leq \mathfrak m\leq m_s$ everywhere;
\item $\mathfrak m=m_u$ in some conic neighborhood of $E_u^*$;
\item $\mathfrak m=m_s$ in some conic neighborhood of $E_s^*$;
\item $\mathfrak m=m_0$ outside of $V_u\cup V_s$;
\item $H_p \mathfrak m\leq 0$ everywhere.
\end{itemize}
\end{lemm}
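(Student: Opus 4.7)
The plan is to reduce to a construction on the cosphere bundle and then build $\mathfrak{m}$ as an interpolation of $m_u,m_s,m_0$ using two cutoff functions adapted respectively to the sink $\kappa(E_u^*)$ and the source $\kappa(E_s^*)$. Since the desired function is positively homogeneous of degree $0$ in $\xi$, it is equivalent to construct a smooth function on the cosphere bundle $\partial\overline{T^*M}\cong S^*M$, to which the Hamiltonian flow $e^{tH_p}$ descends with the same sink/source picture established in \S\ref{s:dynamics}.

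The heart of the argument is to build two smooth cutoffs $\chi_u,\chi_s:\partial\overline{T^*M}\to[0,1]$ with the following properties: $\chi_u=1$ on some neighborhood of $\kappa(E_u^*)$, $\supp\chi_u\subset\kappa(V_u)$, and $H_p\chi_u\geq 0$ everywhere; symmetrically, $\chi_s=1$ on some neighborhood of $\kappa(E_s^*)$, $\supp\chi_s\subset\kappa(V_s)$, and $H_p\chi_s\leq 0$ everywhere. After shrinking $V_u,V_s$ so that $\overline{V_u}\cap\overline{V_s}=\emptyset$ (possible because $E_u^*\cap E_s^*=0$), we may also arrange $\chi_u+\chi_s\leq 1$. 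The existence of such escape functions is the standard radial sink/source construction (essentially \cite[Lemma~E.52]{DZ-Book}): choose nested forward-invariant open neighborhoods $U_0\Subset U_1$ of $\kappa(E_u^*)$ inside $\kappa(V_u)$ (their existence follows from the sink property, since a small enough neighborhood is mapped strictly into itself by $e^{tH_p}$, $t>0$), pick $\chi_0\in C^\infty$ with $\chi_0=1$ on $\overline{U_0}$ and $\supp\chi_0\subset U_1$, and set
$$
\chi_u(x,\xi)=\frac{1}{T}\int_0^T \chi_0\bigl(e^{tH_p}(x,\xi)\bigr)\,dt
$$
for $T$ large. Then $H_p\chi_u=T^{-1}\bigl(\chi_0\circ e^{TH_p}-\chi_0\bigr)$, and for $T$ large enough every point of $\supp\chi_0\subset U_1$ is mapped by $e^{TH_p}$ into the set $\{\chi_0=1\}$ (uniform convergence to $\kappa(E_u^*)$ on the compact set $\overline{U_1}$), which together with $\chi_0\geq 0$ gives $\chi_u\geq 0$, $H_p\chi_u\geq 0$, and $\supp\chi_u\subset\kappa(V_u)$. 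The source construction is symmetric, using backward-invariant neighborhoods and averaging over $t\in[-T,0]$.

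Given $\chi_u,\chi_s$, define
$$
\mathfrak{m}:=m_u\chi_u+m_s\chi_s+m_0(1-\chi_u-\chi_s),
$$
extended homogeneously of degree $0$ to $T^*M\setminus 0$. Homogeneity and smoothness are immediate. Near $E_u^*$ one has $\chi_u=1,\chi_s=0$, hence $\mathfrak{m}=m_u$; symmetrically $\mathfrak{m}=m_s$ near $E_s^*$; and outside $V_u\cup V_s$ both cutoffs vanish and $\mathfrak{m}=m_0$. The inequality $m_u\leq\mathfrak{m}\leq m_s$ follows since $\mathfrak{m}$ is a convex combination of $m_u,m_s,m_0$ (using $\chi_u,\chi_s\geq 0$ and $\chi_u+\chi_s\leq 1$). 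For the monotonicity condition, compute
$$
H_p\mathfrak{m}=(m_u-m_0)H_p\chi_u+(m_s-m_0)H_p\chi_s,
$$
and both terms are $\leq 0$ because $m_u-m_0\leq 0$ with $H_p\chi_u\geq 0$, and $m_s-m_0\geq 0$ with $H_p\chi_s\leq 0$.

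The main obstacle is the construction of the monotone cutoffs $\chi_u,\chi_s$; everything else is algebra. That step genuinely needs the dynamical input from \S\ref{s:dynamics} (exponential attraction to $\kappa(E_u^*)$ off $\kappa(E_0^*\oplus E_s^*)$, and dually for the source), which supplies the nested flow-invariant neighborhoods used in the averaging construction. One should also take care that the forward-invariant set $U_1$ for the sink construction is chosen small enough so that trajectories coming in from outside $V_u$ into $U_1$ don't spoil the support condition --- this is ensured by taking $U_1$ well inside a flow-trapping neighborhood of $\kappa(E_u^*)$ that is itself contained in $V_u$, which again exists because the sink attracts uniformly on compacta.
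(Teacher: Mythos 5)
Your overall architecture is the same as the paper's: work on fiber infinity, build two flow-monotone cutoffs $\chi_u,\chi_s$ adapted to the sink $\kappa(E_u^*)$ and source $\kappa(E_s^*)$, and set $\mathfrak m=m_u\chi_u+m_s\chi_s+m_0(1-\chi_u-\chi_s)$; the final algebra (homogeneity, the bounds $m_u\leq\mathfrak m\leq m_s$, the sign of $H_p\mathfrak m$) is correct. However, there is a genuine gap in your construction of the cutoffs: with
$$
\chi_u=\frac 1T\int_0^T\chi_0\circ e^{tH_p}\,dt,
$$
the support of $\chi_u$ is contained in $\bigcup_{0\leq t\leq T}e^{-tH_p}(\supp\chi_0)$, i.e.\ it consists of all points whose \emph{forward} orbit meets $\supp\chi_0$ within time $T$. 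Since $\kappa(E_u^*)$ attracts every point of $\partial\overline{T^*M}\setminus\kappa(E_0^*\oplus E_s^*)$, for the large $T$ you need for the trapping property this set contains points far outside $\kappa(V_u)$, so the claimed inclusion $\supp\chi_u\subset\kappa(V_u)$ fails. Your closing remark does not repair this: shrinking $U_1$ inside a trapping neighborhood does not prevent orbits starting outside $V_u$ from entering $U_1$ within time $T$ unless you also control the relation between $T$ and the size of $U_1$ (e.g.\ a trapping time uniform as $U_1$ shrinks, together with the fact that entry from outside $V_u$ takes time growing like $\log$ of the inverse size of $U_1$) — none of which is argued. The symmetric construction of $\chi_s$ by averaging over $t\in[-T,0]$ has the same defect, with the support escaping $\kappa(V_s)$ under forward flow.

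The paper sidesteps this entirely by averaging the \emph{backward} translates over a shifted window: with $\psi_u\in \CIc(V_u;[0,1])$, $\psi_u=1$ near $\kappa(E_u^*)$, and $T$ chosen so that $e^{tH_p}(\supp\psi_u)\subset\{\psi_u=1\}$ for all $t\geq T$, one sets
$$
\chi_u=\frac1T\int_T^{2T}\psi_u\circ e^{-tH_p}\,dt .
$$
Then $\supp\chi_u\subset\bigcup_{T\leq t\leq 2T}e^{tH_p}(\supp\psi_u)\subset\{\psi_u=1\}\subset V_u$, because the forward flow-out of $\supp\psi_u$ stays in the trapping region; and $H_p\chi_u=T^{-1}\big(\psi_u\circ e^{-TH_p}-\psi_u\circ e^{-2TH_p}\big)\geq 0$, since $\psi_u(e^{-2TH_p}(x,\xi))>0$ forces $\psi_u(e^{-TH_p}(x,\xi))=1$. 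So the support condition and the monotonicity come from the same trapping statement, with no quantitative tuning of neighborhoods against $T$. Either adopt this shifted-window averaging (and its mirror image for $\chi_s$), or supply the missing quantitative argument coupling the size of $U_1$ with a uniform trapping time; as written, the support claims for both of your cutoffs are unjustified and, for large $T$, false.
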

%%%%%%%%%%%%%%%%%%%%%%%%%%%%%%%%%%%%%%%%%%%%%%%%%%%%%%%%%%%%%%%%%%%%%%%%%%%%%%%%
\Remark A more refined version of Lemma~\ref{l:weight}
can be found in~\cite[Lemma~1.2]{Faure-Sjostrand}.
In the present paper we do not use that $\mathfrak m=m_0$
outside of $V_u\cup V_s$, but it is a convenient
property to have for wavefront set analysis, see~\cite[Theorem~1.7]{Faure-Sjostrand}.
%%%%%%%%%%%%%%%%%%%%%%%%%%%%%%%%%%%%%%%%%%%%%%%%%%%%%%%%%%%%%%%%%%%%%%%%%%%%%%%%
\begin{proof}
A positively homogeneous function of degree~0 on $T^*M\setminus 0$ is
the pullback by~$\kappa$ of a function on the fiber infinity $\partial\overline{T^*M}$,
and $V_u,V_s$ are preimages by $\kappa$ of some neighborhoods
of $\kappa(E_u^*),\kappa(E_s^*)$. Moreover, the flow $e^{tH_p}$ commutes with $\kappa$.
Thus we will construct $\mathfrak m$ as a function on $\partial\overline{T^*M}$,
consider $V_u,V_s$ as open subsets of $\partial\overline{T^*M}$, and
work with the flow $e^{tH_p}$ restricted to $\partial\overline{T^*M}$.

We now construct dynamically adapted cutoffs on $V_u,V_s$ following a standard argument presented for example in~\cite[Lemma~E.53]{DZ-Book}. We shrink $V_u$ if necessary so that
it does not intersect $\kappa(E_0^*\oplus E_s^*)$.
Take $\psi_u\in \CIc(V_u;[0,1])$ such that $\psi_u=1$ near $\kappa(E_u^*)$.
Since $\kappa(E_u^*)$ is a radial sink for the flow~$e^{tH_p}$, there exists
$T>0$ such that
\begin{equation}
  \label{e:contractor}
e^{tH_p}(\supp\psi_u)\subset \{\psi_u=1\}\quad\text{for all}\quad t\geq T.
\end{equation}
Put
$$
\chi_u:={1\over T}\int_T^{2T} \psi_u\circ e^{-tH_p}\,dt\ \in\ C^\infty(\partial\overline{T^*M};[0,1]).
$$
Then $\supp\chi_u\subset V_u$ (as follows from~\eqref{e:contractor}) and $\chi_u=1$ near $\kappa(E_u^*)$. Moreover
$$
H_p\chi_u=-{1\over T}\int_T^{2T} \partial_t(\psi_u\circ e^{-tH_p})\,dt=
{\psi_u\circ e^{-TH_p}-\psi_u\circ e^{-2TH_p}\over T}\geq 0
$$
where we again use~\eqref{e:contractor}:
for each $(x,\xi)\in \partial\overline{T^*M}$
we have $\psi_u(e^{-2TH_p}(x,\xi))=0$ or $\psi_u(e^{-TH_p}(x,\xi))=1$.

A similar argument gives a function
$\chi_s\in\CIc(V_s;[0,1])$ such that
$\chi_s=1$ near $\kappa(E_s^*)$ and $H_p\chi_s\leq 0$ everywhere. It remains to put
$$
\mathfrak m:=(m_u-m_0)\chi_u+(m_s-m_0)\chi_s+m_0.\qedhere
$$
\end{proof}
%%%%%%%%%%%%%%%%%%%%%%%%%%%%%%%%%%%%%%%%%%%%%%%%%%%%%%%%%%%%%%%%%%%%%%%%%%%%%%%%

%%%%%%%%%%%%%%%%%%%%%%%%%%%%%%%%%%%%%%%%%%%%%%%%%%%%%%%%%%%%%%%%%%%%%%%%%%%%%%%%
\subsubsection{Computing the adjoint-commutator}
\label{s:adj-comm}

We now give the following lemma which computes the expression
in the positive commutator argument for the radial estimates in~\S\ref{s:radial} below:
%%%%%%%%%%%%%%%%%%%%%%%%%%%%%%%%%%%%%%%%%%%%%%%%%%%%%%%%%%%%%%%%%%%%%%%%%%%%%%%%
\begin{lemm}
  \label{l:commutator-computed}
Assume that $\mathbf W\in \Psi^{2m}_h(M;\End(\mathcal E))$
and $\mathbf W^*=\mathbf W$. Then there exists $\mathbf Z\in\Psi^{2m}_h(M;\End(\mathcal E))$
such that $\mathbf Z^*=\mathbf Z$, $\WFh(\mathbf Z)\subset\WFh(\mathbf W)$, and
for each $\lambda\in \mathbb C$ and $\mathbf u\in C^\infty(M;\End(\mathcal E))$
\begin{equation}
  \label{e:commcomp-1}
\Im\langle (\mathbf P-ih\lambda)\mathbf u,\mathbf W\mathbf u\rangle_{L^2(M;\mathcal E)}
=h\big\langle(\mathbf Z-(\Re\lambda)\mathbf W) \mathbf u,\mathbf u\big\rangle_{L^2(M;\mathcal E)}.
\end{equation}
Moreover, the semiclassical principal symbol of $\mathbf Z$ is given by
\begin{equation}
  \label{e:commcomp-2}
\sigma_h(\mathbf Z)={1\over 2}H_{\mathbf X}\sigma_h(\mathbf W)
\end{equation}
where $H_{\mathbf X}:C^\infty(T^*M;\End(\pi^*\mathcal E))\to C^\infty(T^*M;\End(\pi^*\mathcal E))$ is a lift of the vector field~$H_p$ (see Definition~\ref{d:lift}
which can be applied to any vector field).

Finally, the evolution group $e^{tH_{\mathbf X}}$ is
described in terms of the parallel transport from~\eqref{e:parallel-def}:
\begin{equation}
  \label{e:commcomp-3}
e^{tH_{\mathbf X}}\mathbf w(x,\xi)=|\det d\varphi^t(x)|(\mathscr T_{\mathbf X}^t(x))^*\mathbf w(e^{tH_p}(x,\xi))\mathscr T_{\mathbf X}^t(x)
\end{equation}
for all $\mathbf w\in C^\infty(T^*M;\End(\pi^*\mathcal E))$.
Here the adjoint is taken with respect to the inner product~$\langle\bullet,\bullet\rangle_{\mathcal E}$ on~$\mathcal E$ and the determinant
is taken with respect to the density~$\rho_0$ fixed in the beginning of~\S\ref{s:microlocal-framework}.
\end{lemm}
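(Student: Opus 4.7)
The plan is to read off $\mathbf Z$ from the algebraic identity~\eqref{e:commcomp-1}, verify its structural properties (order, wavefront set, self-adjointness), and then compute its principal symbol in a local frame to identify the lift~$H_{\mathbf X}$. Using $\mathbf W^* = \mathbf W$ to move $\mathbf W$ across the $L^2$ pairing and expanding $2i\Im\langle\cdot,\cdot\rangle = \langle\cdot,\cdot\rangle - \overline{\langle\cdot,\cdot\rangle}$, I would write
\[
2i\Im\langle(\mathbf P-ih\lambda)\mathbf u,\mathbf W\mathbf u\rangle_{L^2} = \big\langle(\mathbf W\mathbf P - \mathbf P^*\mathbf W)\mathbf u,\mathbf u\big\rangle_{L^2} - 2ih(\Re\lambda)\langle\mathbf W\mathbf u,\mathbf u\rangle_{L^2},
\]
which forces the definition $\mathbf Z := (2ih)^{-1}(\mathbf W\mathbf P - \mathbf P^*\mathbf W)$. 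Self-adjointness of $\mathbf Z$ is immediate from $(\mathbf W\mathbf P - \mathbf P^*\mathbf W)^* = -(\mathbf W\mathbf P - \mathbf P^*\mathbf W)$ combined with the factor $(2i)^{-1}$.

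To verify $\mathbf Z \in \Psi^{2m}_h(M;\End(\mathcal E))$ and $\WFh(\mathbf Z)\subset\WFh(\mathbf W)$, decompose
\[
\mathbf W\mathbf P - \mathbf P^*\mathbf W = [\mathbf W,\mathbf P] + (\mathbf P-\mathbf P^*)\mathbf W.
\]
Since the principal symbol $p$ of $\mathbf P$ is scalar and real, $\mathbf P - \mathbf P^* \in h\Psi^0_h$; while a local-frame calculation (splitting the scalar derivation part of~$\mathbf P$ from its matrix potential part) shows $[\mathbf W,\mathbf P]\in h\Psi^{2m}_h$ because $\mathbf P$ is a first-order \emph{differential} operator with a built-in factor of~$h$. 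After division by $2ih$ both summands are in $\Psi^{2m}_h$, and the wavefront set containment is routine since $\mathbf W$ appears as a right factor or inside a commutator in each.

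The principal-symbol computation is the substantive step. In a local orthonormal frame for~$\mathcal E$ write $\mathbf X = X\cdot I + A$ with $A$ the connection matrix from~\eqref{e:Xbf-form}. A direct calculation of formal adjoints gives $\mathbf X^* = -X - \operatorname{div}_{\rho_0}\!X + A^*$, hence $(\mathbf P-\mathbf P^*)/h = i(\operatorname{div}_{\rho_0}\!X - A - A^*)$. Applying the scalar commutator rule~\eqref{e:commutator-general} entrywise to $[\mathbf W,-ihX\cdot I]$ yields leading symbol $iH_p\mathbf w$ (with $\mathbf w := \sigma_h(\mathbf W)$), and $[\mathbf W,-ihA]$ contributes the matrix commutator $i[A,\mathbf w]$, so $\sigma_h(h^{-1}[\mathbf W,\mathbf P]) = i(H_p\mathbf w + [A,\mathbf w])$. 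Combining,
\[
\sigma_h(\mathbf Z) = \tfrac{1}{2}\big(H_p\mathbf w - \mathbf wA - A^*\mathbf w + (\operatorname{div}_{\rho_0}\!X)\mathbf w\big).
\]
I then \emph{define} $H_{\mathbf X}\mathbf w$ as the parenthesized expression. The Leibniz rule $H_{\mathbf X}(f\mathbf w) = (H_pf)\mathbf w + f\,H_{\mathbf X}\mathbf w$ for $f\in C^\infty(T^*M)$ is immediate, so $H_{\mathbf X}$ is a lift of $H_p$ in the sense of Definition~\ref{d:lift}; in particular the frame-dependent-looking right-hand side glues to a globally defined first-order operator on $C^\infty(T^*M;\End(\pi^*\mathcal E))$, giving~\eqref{e:commcomp-2}.

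Finally, to establish~\eqref{e:commcomp-3}, let $\Phi^t\mathbf w$ denote its right-hand side; then $\Phi^0\mathbf w = \mathbf w$, and differentiation at $t=0$ uses $\partial_t\mathscr T_{\mathbf X}^t|_{t=0} = -A$ and $\partial_t(\mathscr T_{\mathbf X}^t)^*|_{t=0} = -A^*$ (from~\eqref{e:covder-cor}), the identity $\partial_t|\det d\varphi^t(x)|\,|_{t=0} = \operatorname{div}_{\rho_0}\!X$ (the defining property of the divergence with respect to~$\rho_0$), and $\partial_t\mathbf w(e^{tH_p}(\cdot))|_{t=0} = H_p\mathbf w$. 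The product rule then gives $\partial_t\Phi^t\mathbf w|_{t=0} = H_{\mathbf X}\mathbf w$, so by the group property $\Phi^t$ solves $\partial_t\Phi^t = H_{\mathbf X}\Phi^t$ with $\Phi^0 = \mathrm{id}$ and coincides with $e^{tH_{\mathbf X}}$. The main technical obstacle is the bookkeeping of the several lower-order corrections ($\operatorname{div}_{\rho_0}\!X$ and the Hermitian symmetrization $A+A^*$) arising from $\mathbf P - \mathbf P^*$, together with the check that they combine with the naive matrix commutator to produce precisely the geometrically invariant lift $H_{\mathbf X}$ whose flow is the conjugation-plus-Jacobian formula~\eqref{e:commcomp-3}.
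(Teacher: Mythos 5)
Your proposal is correct and follows essentially the same route as the paper: the same multiplier $\mathbf Z=\tfrac{i}{2h}(\mathbf P^*\mathbf W-\mathbf W\mathbf P)$, the same local orthonormal-frame computation of $\mathbf P^*$ (producing $\operatorname{Div}_{\rho_0}X$ and $A^*$) identifying $H_{\mathbf X}\mathbf w=H_p\mathbf w+(\operatorname{Div}_{\rho_0}X)\mathbf w-A^*\mathbf w-\mathbf wA$, and the same verification of \eqref{e:commcomp-3} via the group property plus the $t=0$ derivative using $\partial_t|_{t=0}\mathscr T_{\mathbf X}^t=-A$ and $\partial_t|_{t=0}\det d\varphi^t=\operatorname{Div}_{\rho_0}X$. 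The only cosmetic difference is your explicit splitting $\mathbf W\mathbf P-\mathbf P^*\mathbf W=[\mathbf W,\mathbf P]+(\mathbf P-\mathbf P^*)\mathbf W$, which the paper handles by a direct symbol computation.
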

%%%%%%%%%%%%%%%%%%%%%%%%%%%%%%%%%%%%%%%%%%%%%%%%%%%%%%%%%%%%%%%%%%%%%%%%%%%%%%%%
\begin{proof}
1. A direct computation shows that~\eqref{e:commcomp-1} holds with
$$
\mathbf Z:={i\over 2h}(\mathbf P^*\mathbf W-\mathbf W\mathbf P).
$$
Since $\mathbf P\in\Psi^1_h(M;\End(\mathcal E))$ is principally
scalar with real-valued principal symbol, the principal
symbol of $\mathbf P^*\mathbf W-\mathbf W\mathbf P$ is equal to~0.
Thus $\mathbf Z\in\Psi^{2m}_h(M;\End(\mathcal E))$.
From the definition of~$\mathbf Z$ we see also that
$\mathbf Z^*=\mathbf Z$ and
$\WFh(\mathbf Z)\subset \WFh(\mathbf W)$.

\noindent 2. Fix a frame $\mathbf e_1,\dots,\mathbf e_n$
on $\mathcal E$ over some open set $U\subset M$
which is orthonormal with respect to the inner product $\langle\bullet,\bullet\rangle_{\mathcal E}$.
The operator $\mathbf X$ is given by~\eqref{e:Xbf-form}
for some matrix $A(x)=(A_{jk}(x))_{j,k=1}^n$ depending
on $x\in U$, so the operator $\mathbf P$ is given by
$$
\mathbf P\sum_{j=1}^n f_j\mathbf e_j=-ih\sum_{j=1}^n\bigg(
Xf_j+\sum_{k=1}^n A_{jk}f_k\bigg)\mathbf e_j.
$$
Denoting by $\Div_{\rho_0}X:=\rho_0^{-1}\mathcal L_X\rho_0$ the divergence of the vector
field~$X$ with respect to the density~$\rho_0$, we compute the adjoint operator:
$$
\mathbf P^*\sum_{j=1}^n f_j\mathbf e_j=-ih\sum_{j=1}^n\bigg(
(X+\Div_{\rho_0}X)f_j-\sum_{k=1}^n \overline{A_{kj}}f_k\bigg)\mathbf e_j.
$$
Using this we see that~\eqref{e:commcomp-2} holds with
\begin{equation}
  \label{e:commcomp-int-1}
H_{\mathbf X}\mathbf w(x,\xi)=H_p\mathbf w(x,\xi)+\Div_{\rho_0}X(x)\mathbf w(x,\xi)
-A(x)^*\mathbf w(x,\xi)-\mathbf w(x,\xi)A(x)
\end{equation}
where we identify sections of $\End(\pi^*\mathcal E)$
with $n\times n$ matrices using the frame $\mathbf e_1,\dots,\mathbf e_n$
and $H_p$ on the right-hand side acts on each matrix entry separately.

\noindent 3. The operator defined by~\eqref{e:commcomp-3} forms a group in~$t$,
so it suffices to check that for each $\mathbf w\in C^\infty(T^*M;\End(\pi^*\mathcal E))$
we have
\begin{equation}
  \label{e:commcomp-int-2}
\partial_t|_{t=0}\big(|\det d\varphi^t(x)|(\mathscr T_{\mathbf X}^t(x))^*\mathbf w(e^{tH_p}(x,\xi))\mathscr T_{\mathbf X}^t(x)\big)=H_{\mathbf X}\mathbf w(x,\xi).
\end{equation}
We argue in a local frame as in Step~2 above. Using this frame 
we view $\mathscr T_{\mathbf X}^t(x)$ as an $n\times n$ matrix. Using the definition
of parallel transport (see~\eqref{e:parallel-def})
and the formula~\eqref{e:covder-cor} we see that
$$
\partial_t|_{t=0} \mathscr T_{\mathbf X}^t(x)=-A(x).
$$
We also have $\partial_t|_{t=0}\det d\varphi^t(x)=\Div_{\rho_0}X(x)$.
Using these two identities and~\eqref{e:commcomp-int-1}, we verify that~\eqref{e:commcomp-int-2} holds.
\end{proof}
%%%%%%%%%%%%%%%%%%%%%%%%%%%%%%%%%%%%%%%%%%%%%%%%%%%%%%%%%%%%%%%%%%%%%%%%%%%%%%%%

%%%%%%%%%%%%%%%%%%%%%%%%%%%%%%%%%%%%%%%%%%%%%%%%%%%%%%%%%%%%%%%%%%%%%%%%%%%%%%%%
\subsection{The threshold conditions and existence of multipliers}
\label{s:threshold}

We now introduce the threshold regularity conditions needed for the
proof of the Fredholm property of $\mathbf P-ih\lambda$;
more specifically, they are used in the proofs of the radial estimates
in~\S\ref{s:radial} below. We start with the following

%%%%%%%%%%%%%%%%%%%%%%%%%%%%%%%%%%%%%%%%%%%%%%%%%%%%%%%%%%%%%%%%%%%%%%%%%%%%%%%%
\begin{defi}
  \label{d:growth-factor}
Assume that $m_u\leq 0\leq m_s$ are given constants. Define the
\emph{growth factors} $r_u(m_u),r_s(m_s)\in\mathbb R$ as the smallest numbers
such that for each $\varepsilon>0$ there exists a constant $C_\varepsilon>0$ such that
for all $x\in M$ and $t\geq 0$
\begin{equation}
  \label{e:growth-factor}
\begin{aligned}
|\det d\varphi^t(x)|^{1\over 2}\cdot\|\mathscr T_{\mathbf X}^t(x)\|
\cdot \big\|d\varphi^t(x)^{T}|_{E_u^*}\big\|^{-m_u}&\leq C_\varepsilon e^{(r_u(m_u)+\varepsilon)t},\\
|\det d\varphi^t(x)|^{1\over 2}\cdot\|\mathscr T_{\mathbf X}^t(x)\|
\cdot \big\|d\varphi^t(x)^{-T}|_{E_s^*}\big\|^{m_s}&\leq C_\varepsilon e^{(r_s(m_s)+\varepsilon)t}.
\end{aligned}
\end{equation}
\end{defi}
%%%%%%%%%%%%%%%%%%%%%%%%%%%%%%%%%%%%%%%%%%%%%%%%%%%%%%%%%%%%%%%%%%%%%%%%%%%%%%%%
\Remarks 1. The constants $r_u(m_u),r_s(m_s)$ do not depend
on the choice of the inner product on~$\mathcal E$, the metric on~$M$,
and the density $\rho_0$ used to define the norms in~\eqref{e:growth-factor}.

\noindent 2. The bounds~\eqref{e:growth-factor} can be reformulated
in terms of the action of $d\varphi^t$ on the spaces~$E_u,E_s$:
for all $x\in M$ and $t\geq 0$
\begin{equation}
  \label{e:growth-factor-2}
\begin{aligned}
|\det d\varphi^t(x)|^{1\over 2}\cdot\|\mathscr T_{\mathbf X}^t(x)\|
\cdot \big\|d\varphi^t(x)|_{E_s}\big\|^{-m_u}&\leq C_\varepsilon e^{(r_u(m_u)+\varepsilon)t},\\
|\det d\varphi^t(x)|^{1\over 2}\cdot\|\mathscr T_{\mathbf X}^t(x)\|
\cdot \big\|d\varphi^t(x)^{-1}|_{E_u}\big\|^{m_s}&\leq C_\varepsilon e^{(r_s(m_s)+\varepsilon)t}.
\end{aligned}
\end{equation}
We see that
\begin{equation}
  \label{e:rus-big}
r_u(m_u)\leq C_1+\theta m_u,\quad
r_s(m_s)\leq C_1-\theta m_s
\end{equation}
for some constant $C_1$ depending only on the lift~$\mathbf X$, where $\theta>0$ is the constant in the exponential contraction property~\eqref{e:Anosov-contraction}.

The next lemma introduces the threshold regularity conditions
and constructs the multipliers used in the proofs of the radial estimates:
%%%%%%%%%%%%%%%%%%%%%%%%%%%%%%%%%%%%%%%%%%%%%%%%%%%%%%%%%%%%%%%%%%%%%%%%%%%%%%%%
\begin{lemm}
  \label{l:multipliers}
Assume that $m_u\leq 0\leq m_s$ and $\lambda\in\mathbb C$ satisfy the threshold condition
\begin{equation}
  \label{e:threshold-mult}
r_u(m_u)<\Re\lambda,\qquad
r_s(m_s)<\Re\lambda.
\end{equation}
Then there exist $\mathbf w_u,\mathbf w_s\in C^\infty(T^*M\setminus 0;\End(\pi^*\mathcal E))$
such that:
\begin{itemize}
\item $\mathbf w_u^*=\mathbf w_u$, $\mathbf w_s^*=\mathbf w_s$,
and $\mathbf w_u,\mathbf w_s$ are positive definite everywhere;
\item $\mathbf w_u$, $\mathbf w_s$ are positively homogeneous
of degrees $2m_u,2m_s$, that is for each $(x,\xi)\in T^*M\setminus 0$
and $\tau>0$
$$
\mathbf w_u(x,\tau\xi)=\tau^{2m_u}\mathbf w_u(x,\xi),\qquad
\mathbf w_s(x,\tau\xi)=\tau^{2m_s}\mathbf w_s(x,\xi);
$$
\item if $H_{\mathbf X}$ is the operator defined in Lemma~\ref{l:commutator-computed},
then
$$
(H_{\mathbf X}-2\Re\lambda)\mathbf w_u(x,\xi),\qquad
(H_{\mathbf X}-2\Re\lambda)\mathbf w_s(x,\xi)
$$
are self-adjoint, positively homogeneous of degrees $2m_u$, $2m_s$ respectively,
and negative definite for all $(x,\xi)$ in $E_u^*\setminus 0$ and $E_s^*\setminus 0$ respectively.
\end{itemize}
\end{lemm}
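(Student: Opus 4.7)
I would construct $\mathbf w_u$ and $\mathbf w_s$ by time-averaging simple positive definite seeds under the lifted Hamiltonian flow $e^{tH_{\mathbf X}}$ provided by Lemma~\ref{l:commutator-computed}, with an exponential weight tuned to $\Re\lambda$. The starting point is the explicit formula~\eqref{e:commcomp-3}: for any self-adjoint, positive definite, positively homogeneous $\mathbf w_0$, the function $e^{tH_{\mathbf X}}\mathbf w_0$ is again self-adjoint, positive definite, smooth on $T^*M\setminus 0$, and has the same degree of homogeneity (since $e^{tH_p}$ is linear on fibers and $|\det d\varphi^t(x)|$, $\mathscr T_{\mathbf X}^t(x)$ depend only on~$x$). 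The growth factor estimates in Definition~\ref{d:growth-factor} control exactly the norm of this quantity on $E_u^*$ and $E_s^*$.

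\textbf{Construction.} Fix smooth positive-definite seeds $\mathbf w_{u,0}(x,\xi):=|\xi|^{2m_u}I_{\mathcal E(x)}$ and $\mathbf w_{s,0}(x,\xi):=|\xi|^{2m_s}I_{\mathcal E(x)}$ using any smooth norm on $T^*M$. For large $T_u,T_s>0$ to be chosen later, define
\begin{equation*}
\mathbf w_u := \int_0^{T_u} e^{-2(\Re\lambda)t}\,e^{tH_{\mathbf X}}\mathbf w_{u,0}\,dt,\qquad
\mathbf w_s := \int_0^{T_s} e^{-2(\Re\lambda)t}\,e^{tH_{\mathbf X}}\mathbf w_{s,0}\,dt.
\end{equation*}
The required homogeneity, self-adjointness, and positive-definiteness properties are inherited pointwise from the seeds together with~\eqref{e:commcomp-3}. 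A direct integration by parts using $H_{\mathbf X}e^{tH_{\mathbf X}}\mathbf w_0=\partial_t e^{tH_{\mathbf X}}\mathbf w_0$ gives the key identity
\begin{equation*}
(H_{\mathbf X}-2\Re\lambda)\mathbf w_u = e^{-2(\Re\lambda)T_u}e^{T_u H_{\mathbf X}}\mathbf w_{u,0}-\mathbf w_{u,0},
\end{equation*}
and similarly for $\mathbf w_s$. Since $H_{\mathbf X}$ preserves homogeneity, $(H_{\mathbf X}-2\Re\lambda)\mathbf w_u$ is homogeneous of degree $2m_u$, and similarly for $\mathbf w_s$.

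\textbf{Verifying negative definiteness.} It remains to choose $T_u,T_s$ so that the boundary terms are strictly dominated by the seeds on $E_u^*\setminus 0$ and $E_s^*\setminus 0$ respectively. For $\xi\in E_u^*(x)$, the duality $\|d\varphi^T(x)^{-T}|_{E_u^*}\|^{-1}=\|d\varphi^T(x)|_{E_s}\|^{-1}$ and $2m_u\leq 0$ combine with~\eqref{e:commcomp-3} to give
\begin{equation*}
\|e^{-2(\Re\lambda)T}e^{TH_{\mathbf X}}\mathbf w_{u,0}(x,\xi)\|
\leq C_\varepsilon e^{2(r_u(m_u)+\varepsilon-\Re\lambda)T}|\xi|^{2m_u},
\end{equation*}
and the analogous bound on $E_s^*$ gives $C_\varepsilon e^{2(r_s(m_s)+\varepsilon-\Re\lambda)T}|\xi|^{2m_s}$. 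The threshold assumption~\eqref{e:threshold-mult} lets us fix $\varepsilon>0$ with these exponents strictly negative, so by homogeneity it suffices to make the boundary term smaller than half the smallest eigenvalue of $\mathbf w_{u,0}$ on the compact set $E_u^*\cap\{|\xi|=1\}$ (respectively for $\mathbf w_s$). Choosing $T_u,T_s$ large enough then forces $(H_{\mathbf X}-2\Re\lambda)\mathbf w_u\leq-\tfrac12\mathbf w_{u,0}$ on $E_u^*\setminus 0$ and analogously on $E_s^*\setminus 0$. The only delicate point is verifying that the relevant singular-value/duality identification gives the correct exponent $\|d\varphi^t|_{E_s}\|^{-m_u}$ appearing in~\eqref{e:growth-factor-2}; once that matches, the construction is automatic and the main obstacle reduces to bookkeeping of the dual dynamics.
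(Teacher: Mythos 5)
Your construction is essentially the paper's proof: the same seeds $|\xi|^{2m_u}I_{\mathcal E}$, $|\xi|^{2m_s}I_{\mathcal E}$, the same exponentially weighted finite-time average $\int_0^{T}e^{-2\Re\lambda\,t}e^{tH_{\mathbf X}}(\cdot)\,dt$, and the same integration-by-parts identity, with the threshold condition~\eqref{e:threshold-mult} used to make the boundary term strictly smaller than the seed on $E_u^*\setminus 0$ and $E_s^*\setminus 0$ for $T$ large. The only (cosmetic) point to tighten is the duality step you flag as delicate: it is in fact not needed, since for $\xi\in E_u^*(x)$ one has $|d\varphi^t(x)^{-T}\xi|\geq \|d\varphi^t(x)^T|_{E_u^*}\|^{-1}|\xi|$, and with $m_u\leq 0$ the factor $\|d\varphi^t(x)^T|_{E_u^*}\|^{-2m_u}$ is exactly what enters Definition~\ref{d:growth-factor} via~\eqref{e:growth-factor}; your stated identity should anyway be the two-sided comparison $\|d\varphi^t(x)^T|_{E_u^*}\|\asymp\|d\varphi^t(x)|_{E_s}\|$ (from the nondegenerate pairing of $E_u^*$ with $E_s$), with the uniform constants absorbed into $C_\varepsilon$.
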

%%%%%%%%%%%%%%%%%%%%%%%%%%%%%%%%%%%%%%%%%%%%%%%%%%%%%%%%%%%%%%%%%%%%%%%%%%%%%%%%
\begin{proof}
For two self-adjoint elements $\mathbf a,\mathbf b\in \End(\mathcal E(x))$, we write
$\mathbf a\boldsymbol<\mathbf b$ if $\mathbf b-\mathbf a$ is positive definite.

\noindent 1. Fix a metric on $M$ and define the sections
$\mathbf w_u^{0},\mathbf w_s^{0}\in C^\infty(T^*M\setminus 0;\End(\pi^*\mathcal E))$ by
$$
\mathbf w_u^{0}(x,\xi):=|\xi|^{2m_u} I_{\mathcal E(x)},\quad
\mathbf w_s^{0}(x,\xi):=|\xi|^{2m_s} I_{\mathcal E(x)}
$$
where $I_{\mathcal E(x)}$ is the identity map in $\End(\mathcal E(x))$.
We claim that under the threshold condition~\eqref{e:threshold-mult} we have for all $t>0$ large enough
\begin{equation}
  \label{e:multipliers-int}
\begin{aligned}
e^{tH_{\mathbf X}}\mathbf w_u^0(x,\xi)&\boldsymbol <e^{2\Re\lambda t} \mathbf w_u^0(x,\xi)\quad\text{for all}\quad(x,\xi)\in E_u^*\setminus 0,\\
e^{tH_{\mathbf X}}\mathbf w_s^0(x,\xi)&\boldsymbol<e^{2\Re\lambda t} \mathbf w_s^0(x,\xi)\quad\text{for all}\quad (x,\xi)\in E_s^*\setminus 0.
\end{aligned}
\end{equation}
We show the first statement in~\eqref{e:multipliers-int}, with the second one proved
similarly. Let $(x,\xi)\in E_u^*\setminus 0$ and
$\mathbf v\in \mathcal E(x)$. Using the formula~\eqref{e:commcomp-3}
for $e^{tH_{\mathbf X}}$ we compute
$$
\begin{aligned}
\langle e^{tH_{\mathbf X}}\mathbf w_u^0(x,\xi) \mathbf v,\mathbf v\rangle_{\mathcal E}&=
|\det d\varphi^{t}(x)|\cdot |d\varphi^{t}(x)^{-T}\xi|^{2m_u}\cdot \|\mathscr T_{\mathbf X}^{t}(x)\mathbf v\|_{\mathcal E(\varphi^t(x))}^2,\\
\langle \mathbf w_u^0(x,\xi)\mathbf v,\mathbf v\rangle_{\mathcal E}&=
|\xi|^{2m_u}\|\mathbf v\|_{\mathcal E(x)}^2.
\end{aligned}
$$
We have $m_u\leq 0$ and $|d\varphi^{t}(x)^{-T}\xi|\geq \|d\varphi^{t}(x)^T|_{E_u^*}\|^{-1}\cdot |\xi|$, so
$$
|d\varphi^{t}(x)^{-T}\xi|^{2m_u}\leq \|d\varphi^{t}(x)^T|_{E_u^*}\|^{-2m_u}\cdot
|\xi|^{2m_u}.
$$
Now~\eqref{e:multipliers-int} follows from the bound
$$
|\det d\varphi^{t}(x)|\cdot \|d\varphi^{t}(x)^T|_{E_u^*}\|^{-2m_u}
\|\mathscr T_{\mathbf X}^{t}(x)\|^2< e^{2\Re\lambda t}
$$
which holds for $t>0$ large enough by~\eqref{e:threshold-mult}
since the left-hand side is $\mathcal O_\varepsilon(e^{(2r_u(m_u)+\varepsilon)t})$
for any $\varepsilon>0$.

\noindent 2. Fix $t_0>0$ such that~\eqref{e:multipliers-int} holds with $t:=t_0$. We define
$$
\mathbf w_u:=\int_0^{t_0} e^{-2\Re\lambda t}e^{tH_{\mathbf X}}\mathbf w^0_u\,dt,\qquad
\mathbf w_s:=\int_0^{t_0} e^{-2\Re\lambda t}e^{tH_{\mathbf X}}\mathbf w^0_s\,dt.
$$
It is straightforward to check using~\eqref{e:commcomp-3} that
$\mathbf w_u,\mathbf w_s$ are self-adjoint, positively homogeneous
of degrees $2m_u,2m_s$ respectively, and positive definite.
Since $e^{tH_{\mathbf X}}$ is the evolution group associated to~$H_{\mathbf X}$, we have
$$
(H_{\mathbf X}-2\Re\lambda)\mathbf w_u=\int_0^{t_0} \partial_t\big(e^{-2\Re\lambda t}e^{tH_{\mathbf X}}\mathbf w^0_u\big)\,dt=e^{-2\Re\lambda t_0}e^{t_0H_{\mathbf X}}\mathbf w_u^0
-\mathbf w_u^0
$$
and similarly $(H_{\mathbf X}-2\Re\lambda)\mathbf w_s=e^{-2\Re\lambda t_0}e^{t_0H_{\mathbf X}}\mathbf w_s^0 -\mathbf w_s^0$.
We see that $(H_{\mathbf X}-2\Re\lambda)\mathbf w_u$, $(H_{\mathbf X}-2\Re\lambda)\mathbf w_s$
are self-adjoint and positively homogeneous of degrees~$2m_u$,
$2m_s$ respectively. Moreover, by~\eqref{e:multipliers-int}
these sections are negative definite on $E_u^*\setminus 0,E_s^*\setminus 0$ respectively.
\end{proof}
%%%%%%%%%%%%%%%%%%%%%%%%%%%%%%%%%%%%%%%%%%%%%%%%%%%%%%%%%%%%%%%%%%%%%%%%%%%%%%%%

%%%%%%%%%%%%%%%%%%%%%%%%%%%%%%%%%%%%%%%%%%%%%%%%%%%%%%%%%%%%%%%%%%%%%%%%%%%%%%%%
\subsubsection{Examples}
\label{s:examples-2}

We now compute the growth factors $r_u(m_u)$, $r_s(m_s)$ from Definition~\ref{d:growth-factor}
in a couple of special cases of the examples considered in~\S\ref{s:examples-1}. More precisely,
we study the threshold regularity condition $\Re\lambda>\max(r_u(m_u),r_s(m_s))$
given in~\eqref{e:threshold-mult}.

We start with the basic case when $\mathcal E=M\times\mathbb C$ is trivial,
$\mathbf X=X$, and $\varphi^t$ is volume preserving. In this case
the condition~\eqref{e:threshold-mult} becomes
\begin{equation}
\Re\lambda>\max(\theta_s m_u,-\theta_u m_s)
\end{equation}
where $\theta_s,\theta_u>0$ are the largest numbers such that
for each $\varepsilon>0$ there exists $C_\varepsilon>0$ such that
for all $t\geq 0$
$$
\|d\varphi^t|_{E_s}\|\leq C_\varepsilon e^{-(\theta_s-\varepsilon) t},\qquad
\|d\varphi^{-t}|_{E_u}\|\leq C_\varepsilon e^{-(\theta_u-\varepsilon) t}.
$$

We next discuss the case when $X$ is the generator of the geodesic flow
on an $n+1$-dimensional compact hyperbolic manifold~$(\Sigma,g)$
and $\mathbf X=\mathcal L_X$ acts on sections of the bundle of perpendicular differential $k$-forms
$\Omega^k_0$. In this case $\varphi^t$ is volume preserving,
$\dim E_u=\dim E_s=n$,
and for the correct choice of metric on $M$ (the Sasaki metric) we have
$$
|d\varphi^t(x)v|=\begin{cases}
|v|,& v\in E_0(x);\\
e^t|v|,& v\in E_u(x);\\
e^{-t}|v|, & v\in E_s(x).
\end{cases}
$$
It follows that the parallel transport $\mathscr T_{\mathbf X}^t(x)$
has norm $e^{\min(k,2n-k)t}$ for $t\geq 0$, and the condition~\eqref{e:threshold-mult} becomes
\begin{equation}
\Re\lambda>\max(m_u,-m_s)+\min(k,2n-k).
\end{equation}

%%%%%%%%%%%%%%%%%%%%%%%%%%%%%%%%%%%%%%%%%%%%%%%%%%%%%%%%%%%%%%%%%%%%%%%%%%%%%%%%
%%%%%%%%%%%%%%%%%%%%%%%%%%%%%%%%%%%%%%%%%%%%%%%%%%%%%%%%%%%%%%%%%%%%%%%%%%%%%%%%
\section{Meromorphic continuation}

In this section we state and prove the main result of this paper, Theorem~\ref{t:main}
(see~\S\ref{s:main}). 

%%%%%%%%%%%%%%%%%%%%%%%%%%%%%%%%%%%%%%%%%%%%%%%%%%%%%%%%%%%%%%%%%%%%%%%%%%%%%%%%
\subsection{Anisotropic Sobolev spaces and statement of the result}
\label{s:anisotropic-spaces}

We first introduce the spaces on which meromorphic continuation holds.
We fix a function
$$
\mathfrak m\in C^\infty(T^*M\setminus 0;\mathbb R)
$$
which satisfies the following conditions:
\begin{itemize}
\item $\mathfrak m$ is positively homogeneous of degree~0, that is $\mathfrak m(x,\tau\xi)=\mathfrak m(x,\xi)$ for all $(x,\xi)\in T^*M\setminus 0$
and $\tau>0$;
\item there exist constants $m_u\leq 0\leq m_s$ such that
$m_u\leq \mathfrak m\leq m_s$ everywhere and
$$
\mathfrak m=m_u\quad\text{near }E_u^*\setminus 0,\qquad
\mathfrak m=m_s\quad\text{near }E_s^*\setminus 0
$$
where the dual unstable/stable spaces $E_u^*,E_s^*\subset T^*M$ were introduced
in~\eqref{e:stun-dual};
\item $H_p\mathfrak m\leq 0$ everywhere, where the vector field $H_p$
is introduced in~\S\ref{s:dynamics}; equivalently,
$\mathfrak m(\varphi^t(x),d\varphi^t(x)^{-T}\xi)\leq \mathfrak m(x,\xi)$
for all $(x,\xi)\in T^*M\setminus 0$ and $t\geq 0$.
\end{itemize}
Such $\mathfrak m$ exists for any choice of $m_u\leq 0\leq m_s$
by Lemma~\ref{l:weight}.

Given $\mathfrak m$, we fix a semiclassical pseudodifferential operator
$\mathbf F_{\mathfrak m}$ such that:
\begin{itemize}
\item $\mathbf F_{\mathfrak m}$ lies in $\Psi^{0+}_h(M;\End(\mathcal E)):=\bigcap_{\varepsilon>0}\Psi^\varepsilon_h(M;\End(\mathcal E))$
and $\mathbf F_{\mathfrak m}^*=\mathbf F_{\mathfrak m}$;
\item $\mathbf F_{\mathfrak m}$ is principally scalar and, for some fixed choice of Riemannian
metric on~$M$,
$$
\sigma_h(\mathbf F_{\mathfrak m})(x,\xi)=\mathfrak m(x,\xi)\log|\xi|\quad\text{when}\quad |\xi|\geq 1.
$$
\end{itemize}
For $t\geq 0$ we can define the exponential operators
\begin{equation}
  \label{e:expors}
e^{t\mathbf F_{\mathfrak m}}\in \Psi^{tm_s+}_h(M;\End(\mathcal E)),\qquad
e^{-t\mathbf F_{\mathfrak m}}\in \Psi^{-tm_u+}_h(M;\End(\mathcal E)).
\end{equation}
See~\cite[Theorem~8.6]{Zworski-Book} for the case of scalar operators and Weyl quantization on~$\mathbb R^n$
(with Beals's theorem for the Kohn--Nirenberg calculus given in~\cite[Theorem~9.12]{Zworski-Book}); the proof adapts to the case of manifolds and vector bundles studied here.
Alternatively, see~\cite[Appendix~A]{Faure-Roy-Sjostrand}.

We now define the semiclassical \emph{anisotropic Sobolev space} $H^{\mathfrak m}_h(M;\mathcal E)$ similarly to~\cite[\S8.3.1]{Zworski-Book}:
$$
H^{\mathfrak m}_h(M;\mathcal E):=e^{-\mathbf F_{\mathfrak m}}L^2(M;\mathcal E),\qquad
\|\mathbf u\|_{H^{\mathfrak m}_h}:=\|e^{\mathbf F_{\mathfrak m}}\mathbf u\|_{L^2}.
$$
The spaces $H^{\mathfrak m}_h(M;\mathcal E)$ for different values of~$h$
are all equivalent, with constants in the norm equivalency bounds depending on~$h$.
Therefore, we may use the notation $H^{\mathfrak m}(M;\mathcal E)$ when the choice of norm
is not important. We have
\begin{equation}
  \label{e:anis-squeeze}
H^{m_s}(M;\mathcal E)\ \subset\ H^{\mathfrak m}(M;\mathcal E)\ \subset\ H^{m_u}(M;\mathcal E)
\end{equation}
and the space $C^\infty(M;\mathcal E)$ is dense in $H^{\mathfrak m}(M;\mathcal E)$.

Fix open subsets
\begin{equation}
  \label{e:tilde-V}
\widetilde V_u,\widetilde V_s\subset \overline{T^*M}\setminus 0,\quad
\kappa(E_u^*)\subset\widetilde V_u,\quad
\kappa(E_s^*)\subset\widetilde V_s,
\end{equation}
such that $\mathfrak m=m_u$ on $\widetilde V_u$ and $\mathfrak m=m_s$ on $\widetilde V_s$.
Then the space $H^{\mathfrak m}_h(M;\mathcal E)$ is
equivalent to the usual Sobolev space $H^{m_u}_h(M;\mathcal E)$
microlocally on~$\widetilde V_u$, that is for
each $\mathbf A\in\Psi^0_h(M;\End(\mathcal E))$ with $\WFh(\mathbf A)\subset \widetilde V_u$,
there exists a constant $C$ such that
for each $\mathbf u\in C^\infty(M;\mathcal E)$ and each~$N$
\begin{equation}
  \label{e:anis-equiv}
\begin{aligned}
\|\mathbf A\mathbf u\|_{H^{\mathfrak m}_h}&\leq C\|\mathbf u\|_{H^{m_u}_h}+\mathcal O(h^\infty)\|\mathbf u\|_{H^{-N}_h},\\
\|\mathbf A\mathbf u\|_{H^{m_u}_h}&\leq C\|\mathbf u\|_{H^{\mathfrak m}_h}+\mathcal O(h^\infty)\|\mathbf u\|_{H^{-N}_h}.
\end{aligned}
\end{equation}
Similarly, $H^{\mathfrak m}_h(M;\mathcal E)$ is equivalent to the space
$H^{m_s}(M;\mathcal E)$ microlocally on~$\widetilde V_s$.

%%%%%%%%%%%%%%%%%%%%%%%%%%%%%%%%%%%%%%%%%%%%%%%%%%%%%%%%%%%%%%%%%%%%%%%%%%%%%%%%
\subsubsection{Statement of the result}
\label{s:main}

We can now state the main result of this paper,
which gives meromorphic continuation of the Pollicott--Ruelle resolvent
on anisotropic Sobolev spaces to a specific half-plane:

%%%%%%%%%%%%%%%%%%%%%%%%%%%%%%%%%%%%%%%%%%%%%%%%%%%%%%%%%%%%%%%%%%%%%%%%%%%%%%%%
\begin{theo}
  \label{t:main}
Let $X$ be the generator of an Anosov flow $\varphi^t$ on a compact
manifold~$M$, $\mathcal E$ be a smooth vector bundle over~$M$,
and $\mathbf X:C^\infty(M;\mathcal E)\to C^\infty(M;\mathcal E)$
be a lift of $X$ (see Definition~\ref{d:lift}).

Assume that the function $\mathfrak m\in C^\infty(T^*M\setminus 0;\mathbb R)$
satisfies the conditions in the beginning of~\S\ref{s:anisotropic-spaces},
for some constants $m_u\leq 0\leq m_s$. Let $H^{\mathfrak m}(M;\mathcal E)$
be the corresponding anisotropic Sobolev space.

Then the Pollicott--Ruelle resolvent $R_{\mathbf X}(\lambda)$
defined in~\eqref{e:P-R-resolvent-gen} admits a meromorphic continuation as a family
of operators $H^{\mathfrak m}(M;\mathcal E)\to H^{\mathfrak m}(M;\mathcal E)$
to the half-plane
\begin{equation}
  \label{e:threshold-thm}
\Re\lambda>\max(r_u(m_u),r_s(m_s))
\end{equation}
where $r_u(m_u),r_s(m_s)$ were introduced in Definition~\ref{d:growth-factor}.
\end{theo}
%%%%%%%%%%%%%%%%%%%%%%%%%%%%%%%%%%%%%%%%%%%%%%%%%%%%%%%%%%%%%%%%%%%%%%%%%%%%%%%%
\Remark
By~\eqref{e:rus-big}, if we fix $\lambda$ then for $-m_u,m_s$
large enough the condition~\eqref{e:threshold-thm} holds.
Since $C^\infty(M;\mathcal E)\subset H^{\mathfrak m}(M;\mathcal E)\subset \mathcal D'(M;\mathcal E)$, we see that $R_{\mathbf X}(\lambda)$
continues meromorphically as a family of operators
$C^\infty(M;\mathcal E)\to\mathcal D'(M;\mathcal E)$ to $\lambda\in\mathbb C$.

%%%%%%%%%%%%%%%%%%%%%%%%%%%%%%%%%%%%%%%%%%%%%%%%%%%%%%%%%%%%%%%%%%%%%%%%%%%%%%%%
\subsubsection{The conjugated operator}
\label{s:conjugated-operator}

The action of $\mathbf P=-ih\mathbf X$ on $H^{\mathfrak m}_h(M;\mathcal E)$ is equivalent to
the action on $L^2(M;\mathcal E)$ of the conjugated operator
\begin{equation}
  \label{e:tilde-P}
\widetilde{\mathbf P}:=e^{\mathbf F_{\mathfrak m}}\mathbf Pe^{-\mathbf F_{\mathfrak m}}.
\end{equation}
Using Taylor's formula with integral remainder
for the family of operators $e^{t\mathbf F_{\mathfrak m}}\mathbf Pe^{-t\mathbf F_{\mathfrak m}}$, $t\in [0,1]$, we see that for any $N\in\mathbb N$, we can expand $\widetilde{\mathbf P}$ as follows:
\begin{equation}
  \label{e:taylor}
\widetilde{\mathbf P}=\sum_{j=0}^{N-1} {\ad_{\mathbf F_{\mathfrak m}}^j \mathbf P\over j!}
+\int_0^1 (1-t)^{N-1}e^{t\mathbf F_{\mathfrak m}}{\ad_{\mathbf F_{\mathfrak m}}^N \mathbf P\over (N-1)!}e^{-t\mathbf F_{\mathfrak m}}\,dt
\end{equation}
where $\ad_{\mathbf F_{\mathfrak m}}\mathbf A=[\mathbf F_{\mathfrak m},\mathbf A]$
for any operator $\mathbf A$ on $C^\infty(M;\mathcal E)$.

Since
$\mathbf F_{\mathfrak m}\in\Psi^{0+}_h(M;\End(\mathcal E))$ is principally
scalar, we have $\ad_{\mathbf F_{\mathfrak m}}:\Psi^{m+}_h(M;\End(\mathcal E))\to
h\Psi^{m-1+}_h(M;\End(\mathcal E))$ for all~$m$.
Therefore, the $j$-th term in the sum in~\eqref{e:taylor} is
in $h^j\Psi^{1-j+}_h$; using~\eqref{e:expors}, we see
that the remainder is in $h^N\Psi^{1-N+m_s-m_u+}_h$.
Since $N$ can be chosen arbitrarily large, we in particular get the expansion
$$
\widetilde{\mathbf P}=\mathbf P+[\mathbf F_{\mathfrak m},\mathbf P]
+\mathcal O(h^2)_{\Psi^{-1+}_h(M;\End(\mathcal E))}.
$$
It follows that $\widetilde{\mathbf P}$ lies in $\Psi^1_h(M;\End(\mathcal E))$
and is principally scalar with
\begin{equation}
  \label{e:P-conj-symb}
\sigma_h(\widetilde{\mathbf P})=p+ih (H_p\mathfrak m)\log|\xi|,
\end{equation}
where we used that $H_p\log|\xi|\in S^0$ for $|\xi|\geq 1$.

An expansion of the form~\eqref{e:taylor} is valid for any pseudodifferential operator
in place of $\mathbf P$. In particular, we get
\begin{equation}
  \label{e:cutoff-conj}
\mathbf A\in\Psi^0_h(M;\End(\mathcal E))\ \Longrightarrow\
e^{\mathbf F_{\mathfrak m}}\mathbf A e^{-\mathbf F_{\mathfrak m}}\in\Psi^0_h(M;\End(\mathcal E))
\end{equation}
and the wavefrontset / elliptic set of $\mathbf A$ coincide
with those of~$e^{\mathbf F_{\mathfrak m}}\mathbf A e^{-\mathbf F_{\mathfrak m}}$.

%%%%%%%%%%%%%%%%%%%%%%%%%%%%%%%%%%%%%%%%%%%%%%%%%%%%%%%%%%%%%%%%%%%%%%%%%%%%%%%%
\subsection{Invertibility of the perturbed operator}

We now state the key estimate for the proof of Theorem~\ref{t:main},
which gives invertibility for the operator $\mathbf P=-ih\mathbf X$
on the anisotropic Sobolev space $H^{\mathfrak m}_h(M;\mathcal E)$ when
modified by a complex absorbing operator. Consider
the dual space of $H^{\mathfrak m}_h(M;\mathcal E)$ (with respect to the $L^2$ inner product), given by
$$
H^{-\mathfrak m}_h(M;\mathcal E):=e^{\mathbf F_{\mathfrak m}}L^2(M;\mathcal E).
$$
Fix a principally scalar pseudodifferential operator
$$
\mathbf Q\in\Psi^0_h(M;\End(\mathcal E)),\quad
\sigma_h(\mathbf Q)\geq 0
$$
such that $\WFh(\mathbf Q)$ does not intersect the fiber infinity $\partial\overline{T^*M}$
and the elliptic set $\Ell_h(\mathbf Q)$ contains the zero section of $T^*M$. For technical
reasons we also assume that
\begin{equation}
  \label{e:Q-tilde-V}
\WFh(\mathbf Q)\cap \widetilde V_u=\WFh(\mathbf Q)\cap\widetilde V_s=\emptyset
\end{equation}
where $\widetilde V_u,\widetilde V_s\subset \overline{T^*M}\setminus 0$
were introduced in~\eqref{e:tilde-V}.
%%%%%%%%%%%%%%%%%%%%%%%%%%%%%%%%%%%%%%%%%%%%%%%%%%%%%%%%%%%%%%%%%%%%%%%%%%%%%%%%
\begin{lemm}\label{l:invertibility}
Let $\mathfrak m$ satisfy the conditions in the beginning of~\S\ref{s:anisotropic-spaces}
and assume that $\Omega\subset\mathbb C$ is a compact set such that
\begin{equation}
  \label{e:inv-threshold}
\Re\lambda>\max(r_u(m_u),r_s(m_s))\quad\text{for all}\quad
\lambda\in\Omega.
\end{equation}
Then we have the following
estimates for $h$ small enough, all $\lambda\in\Omega$, and all $\mathbf u\in C^\infty(M;\mathcal E)$,
with the constants independent of~$h,\lambda,\mathbf u$:
\begin{align}
  \label{e:invest-1}
\|\mathbf u\|_{H^{\mathfrak m}_h}&\leq Ch^{-1}\|(\mathbf P-ih\lambda-i\mathbf Q)\mathbf u\|_{H^{\mathfrak m}_h},\\
  \label{e:invest-2}
\|\mathbf u\|_{H^{-\mathfrak m}_h}&\leq Ch^{-1}\|(\mathbf P-ih\lambda-i\mathbf Q)^*\mathbf u\|_{H^{-\mathfrak m}_h}.
\end{align}
\end{lemm}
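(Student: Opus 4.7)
The plan is to reduce both estimates to $L^2$ bounds via the conjugation $\widetilde{\mathbf P}:=e^{\mathbf F_{\mathfrak m}}\mathbf Pe^{-\mathbf F_{\mathfrak m}}$, $\widetilde{\mathbf Q}:=e^{\mathbf F_{\mathfrak m}}\mathbf Qe^{-\mathbf F_{\mathfrak m}}$, and then to establish the $L^2$ bounds by a microlocal partition of $\overline{T^*M}$ into four regions matched to four types of estimates. By~\eqref{e:P-conj-symb}--\eqref{e:cutoff-conj}, $\widetilde{\mathbf P}$ is principally scalar of order one with symbol $p+ih(H_p\mathfrak m)\log|\xi|$, and $\widetilde{\mathbf Q}$ has the same wavefront and elliptic sets as $\mathbf Q$. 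Setting $\mathbf v:=e^{\mathbf F_{\mathfrak m}}\mathbf u$, estimate~\eqref{e:invest-1} is equivalent to $\|\mathbf v\|_{L^2}\leq Ch^{-1}\|(\widetilde{\mathbf P}-ih\lambda-i\widetilde{\mathbf Q})\mathbf v\|_{L^2}$, and \eqref{e:invest-2} reduces analogously after conjugation by $e^{-\mathbf F_{\mathfrak m}}$ and passage to the adjoint.

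The microlocal decomposition is dictated by Lemma~\ref{l:flow-char-global}. I would cover $\overline{T^*M}$ by four open regions: (i) a neighborhood $U_0$ of the zero section contained in $\Ell_h(\mathbf Q)$; (ii) the set $\overline{T^*M}\setminus\{p=0\}$, where $\widetilde{\mathbf P}$ is elliptic at fiber infinity; (iii) a neighborhood $U_u\subset\widetilde V_u$ of $\kappa(E_u^*)$; and (iv) a neighborhood $U_s\subset\widetilde V_s$ of $\kappa(E_s^*)$. By Lemma~\ref{l:flow-char-global}, every characteristic point outside $U_u\cup U_s\cup U_0$ flows forward into $U_u\cup U_0$ and backward into $U_s\cup U_0$. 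Using a principally scalar partition of unity I would then combine an elliptic estimate for $\widetilde{\mathbf Q}$ on (i), an elliptic estimate for $\widetilde{\mathbf P}$ on (ii), a radial sink estimate at $\kappa(E_u^*)$ and a radial source estimate at $\kappa(E_s^*)$ on (iii)--(iv), and standard propagation of singularities along $e^{tH_p}$ to bridge the rest of the characteristic set. The nonpositive imaginary part $h(H_p\mathfrak m)\log|\xi|$ of $\sigma_h(\widetilde{\mathbf P})$ only helps the propagation estimate, giving additional absorption along the flow.

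The principal obstacle is the pair of radial estimates, into which the threshold condition~\eqref{e:inv-threshold} enters through Lemma~\ref{l:multipliers}. At the sink $\kappa(E_u^*)$ I would choose a cutoff $\chi_u\in C^\infty(\overline{T^*M};[0,1])$ supported in $U_u$, equal to $1$ near $\kappa(E_u^*)$, and satisfying $H_p\chi_u\leq 0$ on $\supp\chi_u$ (constructible as in the proof of Lemma~\ref{l:weight}); then quantize $\mathbf W_u:=\Op_h(\chi_u\mathbf w_u)\in\Psi^{2m_u}_h(M;\End(\mathcal E))$ using the multiplier $\mathbf w_u$ from Lemma~\ref{l:multipliers} and apply Lemma~\ref{l:commutator-computed} to obtain $\Im\langle(\mathbf P-ih\lambda)\mathbf u,\mathbf W_u\mathbf u\rangle=h\langle(\mathbf Z_u-\Re\lambda\,\mathbf W_u)\mathbf u,\mathbf u\rangle$. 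By the Leibniz rule for the lift $H_{\mathbf X}$, $\sigma_h(\mathbf Z_u-\Re\lambda\,\mathbf W_u)=\tfrac12(H_p\chi_u)\mathbf w_u+\tfrac12\chi_u(H_{\mathbf X}-2\Re\lambda)\mathbf w_u$: the first summand is negative semidefinite by the choice of $\chi_u$, and the second is negative definite on $\kappa(E_u^*)$ by the third bullet of Lemma~\ref{l:multipliers}, hence in a sufficiently small conic neighborhood of it after shrinking $\chi_u$. Applying the sharp G\r arding inequality (Lemma~\ref{l:sharp-Garding}) to the negative of this symbol then yields microlocal control of $\mathbf u$ near $\kappa(E_u^*)$ in $H^{m_u}_h$, equivalent to $H^{\mathfrak m}_h$ there by~\eqref{e:anis-equiv}. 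The source estimate at $\kappa(E_s^*)$ is strictly analogous using $\mathbf w_s$, and the dual bound~\eqref{e:invest-2} follows by the same scheme applied to the adjoint on $H^{-\mathfrak m}_h$: replacing $\mathfrak m$ by $-\mathfrak m$ swaps the roles of sink and source, but the threshold condition~\eqref{e:inv-threshold} is symmetric in $r_u,r_s$ and the multipliers of Lemma~\ref{l:multipliers}, relabeled accordingly, still apply. Taking $h$ small enough absorbs the $\mathcal O(h^\infty)$ errors, yielding both estimates with constants uniform in $\lambda\in\Omega$.
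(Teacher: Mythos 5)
There is a genuine gap at the radial sink. You propose to build a cutoff $\chi_u$ supported in a small neighborhood $U_u$ of $\kappa(E_u^*)$, equal to $1$ near $\kappa(E_u^*)$, with $H_p\chi_u\leq 0$ on its support, ``as in the proof of Lemma~\ref{l:weight}.'' No such cutoff exists: $\kappa(E_u^*)$ is a radial \emph{sink}, so any trajectory $e^{tH_p}(x,\xi)$ in its basin (off $E_0^*\oplus E_s^*$) enters $U_u$ and converges to the set where $\chi_u=1$ as $t\to+\infty$, while it lies outside $\supp\chi_u$ for $t\ll 0$; hence $\chi_u$ must strictly increase along the flow somewhere, i.e.\ $H_p\chi_u>0$ at some points of $\supp\chi_u$. (The construction in Lemma~\ref{l:weight} gives exactly the opposite sign, $H_p\chi_u\geq 0$, at the sink; the sign $H_p\chi\leq 0$ is achievable only at the source $\kappa(E_s^*)$, which is why your source estimate is fine and matches the paper's Lemma~\ref{l:radial-high}.) With the correct sign, the term $\chi_u(H_p\chi_u)\mathbf w_u$ in the commutator symbol is \emph{nonnegative} and cannot be absorbed; in the paper it is dominated by $|\psi_u|^2\mathbf w_u$ with $\psi_u$ supported in $U_u\setminus\kappa(E_u^*)$, which produces an unavoidable a priori control term $\|\mathbf B_u\mathbf u\|_{H^{\mathfrak m}_h}$ on a punctured neighborhood of the sink in the low regularity radial estimate (Lemma~\ref{l:radial-low}). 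So your claim that the sink and source estimates are ``strictly analogous,'' both free of such a term, is where the argument breaks: a sink estimate without the $\mathbf B_u$ term is simply not provable this way (and would make the gluing step trivially easy, which it is not).

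This asymmetry also dictates the order of assembly, which your four-region scheme does not reflect. One must first control $\mathbf u$ microlocally everywhere \emph{away} from $\kappa(E_u^*)$ — combining the elliptic estimate (on $\Ell_h(\mathbf P)\cup\Ell_h(\mathbf Q)$), the source estimate at $\kappa(E_s^*)$, and propagation of singularities forward along $e^{tH_p}$ (this is the paper's Lemma~\ref{l:almost-there}, using Lemma~\ref{l:flow-char-global}) — and only then invoke the sink estimate, feeding the already-controlled term $\|\mathbf B_u\mathbf u\|_{H^{\mathfrak m}_h}$ into it to close the bound. Your reduction to $L^2$ via conjugation, the use of Lemma~\ref{l:multipliers} under the threshold condition, the sharp G\r arding step, and the treatment of the adjoint estimate by reversing the flow and replacing $\mathfrak m$ by $-\mathfrak m$ are all in line with the paper (modulo the minor point that one should quantize $\chi\sqrt{\mathbf w}$ and use $\mathbf G^*\mathbf G$, or otherwise symmetrize, so that the multiplier is self-adjoint and yields a genuine norm on the elliptic set of $\mathbf G$); but as written the sink step fails and the proof does not close.
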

%%%%%%%%%%%%%%%%%%%%%%%%%%%%%%%%%%%%%%%%%%%%%%%%%%%%%%%%%%%%%%%%%%%%%%%%%%%%%%%%
We will only give the proof of the direct estimate~\eqref{e:invest-1}.
The adjoint estimate~\eqref{e:invest-2} follows from
the direct estimate for the operator $\mathbf X^*$ which is a lift of the vector
field~$-X$. Note that $-(\mathbf P-ih\lambda-i\mathbf Q)^*=-ih\mathbf X^*-ih\bar\lambda -i\mathbf Q^*$. The associated flow is $\varphi^{-t}$ and the stable/unstable
spaces are switched places. The constants $m_u,m_s$
are replaced by $-m_s,-m_u$ and
the weight~$\mathfrak m$ is replaced by $-\mathfrak m$.
Using~\eqref{e:growth-factor}, we see that the threshold condition~\eqref{e:inv-threshold} gives the analogous condition
for the operator $\mathbf X^*$.
(Here the parallel transport corresponding to~$\mathbf X^*$ can be computed using~\eqref{e:evolution-parallel}, as $(e^{-t\mathbf X})^*=e^{-t\mathbf X^*}$.)

The proof of~\eqref{e:invest-1} is broken into several components.
Throughout this section we assume that $h$ is small, $\lambda\in\Omega$,
and $\mathbf u\in C^\infty(M;\mathcal E)$. The constants in the estimates
below are independent of~$h$, and the Sobolev exponent $N$ in the remainders
can be chosen arbitrarily.

%%%%%%%%%%%%%%%%%%%%%%%%%%%%%%%%%%%%%%%%%%%%%%%%%%%%%%%%%%%%%%%%%%%%%%%%%%%%%%%%
\subsubsection{Elliptic estimate}
\label{s:elliptic}

We first state the elliptic estimate:
%%%%%%%%%%%%%%%%%%%%%%%%%%%%%%%%%%%%%%%%%%%%%%%%%%%%%%%%%%%%%%%%%%%%%%%%%%%%%%%%
\begin{lemm}
  \label{l:our-elliptic}
Assume that $\mathbf A\in\Psi^0_h(M;\End(\mathcal E))$ and
$$
\WFh(\mathbf A)\ \subset\ \Ell_h(\mathbf P)\cup\Ell_h(\mathbf Q).
$$ 
Then
\begin{equation}
  \label{e:our-elliptic}
\|\mathbf A \mathbf u\|_{H^{\mathfrak m}_h}\leq C\|(\mathbf P-ih\lambda-i\mathbf Q)\mathbf u\|_{H^{\mathfrak m}_h}+\mathcal O(h^\infty)\|\mathbf u\|_{H^{-N}_h}.
\end{equation}
\end{lemm}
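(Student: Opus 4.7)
The plan is to conjugate by $e^{\mathbf F_{\mathfrak m}}$ to convert \eqref{e:our-elliptic} into a standard semiclassical $L^2$ elliptic estimate, and then construct a microlocal parametrix for $\mathbf P-ih\lambda-i\mathbf Q$ on $\Ell_h(\mathbf P)\cup\Ell_h(\mathbf Q)$.

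First I would set $\mathbf v := e^{\mathbf F_{\mathfrak m}}\mathbf u$ and
$$
\widetilde{\mathbf A} := e^{\mathbf F_{\mathfrak m}}\mathbf A e^{-\mathbf F_{\mathfrak m}},\qquad
\widetilde{\mathbf Q} := e^{\mathbf F_{\mathfrak m}}\mathbf Q e^{-\mathbf F_{\mathfrak m}},\qquad
\widetilde{\mathbf L} := \widetilde{\mathbf P}-ih\lambda-i\widetilde{\mathbf Q}.
$$
By \eqref{e:cutoff-conj} both $\widetilde{\mathbf A},\widetilde{\mathbf Q}$ lie in $\Psi^0_h(M;\End(\mathcal E))$ with unchanged wavefront and elliptic sets and with $\sigma_h(\widetilde{\mathbf Q})=\sigma_h(\mathbf Q)$, while \eqref{e:P-conj-symb} gives $\widetilde{\mathbf P}\in\Psi^1_h(M;\End(\mathcal E))$ principally scalar with symbol $p$. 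The inequality \eqref{e:our-elliptic} is therefore equivalent to
$$
\|\widetilde{\mathbf A}\mathbf v\|_{L^2}\le C\|\widetilde{\mathbf L}\mathbf v\|_{L^2}+\mathcal O(h^\infty)\|\mathbf v\|_{H^{-N}_h},
$$
where the (harmless) Sobolev-exponent loss introduced by $e^{\pm\mathbf F_{\mathfrak m}}$ is absorbed into the $\mathcal O(h^\infty)$ remainder by enlarging $N$.

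Next I would microlocally split $\widetilde{\mathbf A}=\widetilde{\mathbf A}_1+\widetilde{\mathbf A}_2$ modulo $\mathcal O(h^\infty)_{\Psi^{-\infty}_h(M;\End(\mathcal E))}$, with $\WFh(\widetilde{\mathbf A}_1)\subset\Ell_h(\mathbf P)$ and $\WFh(\widetilde{\mathbf A}_2)\subset\Ell_h(\mathbf Q)$; such a splitting exists because $\WFh(\widetilde{\mathbf A})$ is compact in $\overline{T^*M}$ and covered by the open sets $\Ell_h(\mathbf P),\Ell_h(\mathbf Q)$. On $\WFh(\widetilde{\mathbf A}_1)$ one has $|p|\ge c\langle\xi\rangle$, so $\sigma_h(\widetilde{\mathbf L})=p$ is elliptic of order $1$; the standard iterative parametrix, constructed inductively with only the Product Rule \eqref{e:product-rule}, yields $\mathbf E_1\in\Psi^{-1}_h(M;\End(\mathcal E))$ with $\widetilde{\mathbf A}_1=\mathbf E_1\widetilde{\mathbf L}+\mathcal O(h^\infty)_{\Psi^{-\infty}_h}$. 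On $\WFh(\widetilde{\mathbf A}_2)$, which stays in the bounded-$\xi$ region since $\WFh(\mathbf Q)$ avoids fiber infinity, $p$ is bounded and real while $\sigma_h(\mathbf Q)$ is uniformly strictly positive, so $|\sigma_h(\widetilde{\mathbf L})|=|p-i\sigma_h(\mathbf Q)|$ is bounded below and an analogous parametrix $\mathbf E_2\in\Psi^0_h(M;\End(\mathcal E))$ exists. Summing and using $L^2$-boundedness of $\mathbf E_1,\mathbf E_2$ yields the required bound.

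The only subtle point is working in the calculus on vector bundles: the symbol to invert is scalar on $\pi^*\mathcal E$ (since $p$ is scalar and $\mathbf Q$ is principally scalar), so the inversion is unaffected by the bundle structure and the iterative construction uses only the Product Rule \eqref{e:product-rule}, avoiding the Commutator Rule \eqref{e:commutator-general} which would require principally scalar inputs. Note that the threshold condition \eqref{e:inv-threshold} on $\lambda$ plays no role here; it enters only later in the proof of the radial estimates.
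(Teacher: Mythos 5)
Your proposal is correct and follows essentially the same route as the paper: conjugate by $e^{\mathbf F_{\mathfrak m}}$, note that $\widetilde{\mathbf A},\widetilde{\mathbf Q}$ keep their wavefront/elliptic sets and that $\WFh(\widetilde{\mathbf A})\subset\Ell_h(\widetilde{\mathbf P}-ih\lambda-i\widetilde{\mathbf Q})$, and reduce to the $L^2$ elliptic estimate for the conjugated operator. The only difference is that the paper cites the standard elliptic estimate (\cite[Theorem~E.33]{DZ-Book}, adapted to bundles) as a black box, whereas you unpack it with an explicit microlocal partition and parametrix construction — which is the content of that theorem anyway.
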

%%%%%%%%%%%%%%%%%%%%%%%%%%%%%%%%%%%%%%%%%%%%%%%%%%%%%%%%%%%%%%%%%%%%%%%%%%%%%%%%
To prove Lemma~\ref{l:our-elliptic}, we first reduce it to an estimate in
the space $L^2$ for the conjugated operator
$\widetilde{\mathbf P}-ih\lambda-\widetilde{\mathbf Q}$ where
\begin{equation}
  \label{e:Q-conjugated}
\widetilde{\mathbf Q}:=e^{\mathbf F_{\mathfrak m}}\mathbf Qe^{-\mathbf F_{\mathfrak m}}.
\end{equation}
Denote $\widetilde{\mathbf A}:=e^{\mathbf F_{\mathfrak m}}\mathbf Ae^{-\mathbf F_{\mathfrak m}}$. Then~\eqref{e:our-elliptic} follows from the estimate
\begin{equation}
  \label{e:our-elliptic-2}
\|\widetilde{\mathbf A}\mathbf v\|_{L^2}\leq C\|(\widetilde{\mathbf P}-ih\lambda-i\widetilde{\mathbf Q})\mathbf v\|_{L^2}+\mathcal O(h^\infty)\|\mathbf v\|_{H^{-N}_h}
\end{equation}
where we put $\mathbf v:=e^{\mathbf F_{\mathfrak m}}\mathbf u\in C^\infty(M;\mathcal E)$.

Since $\WFh(\mathbf Q)$ does not intersect the fiber infinity $\partial\overline{T^*M}$,
using the expansion~\eqref{e:taylor} for $\mathbf Q$ in place of $\mathbf P$ we see
that $\widetilde{\mathbf Q}=\mathbf Q+\mathcal O(h)_{\Psi^{-\infty}_h(M;\End(\mathcal E))}$.
Moreover,
by~\eqref{e:cutoff-conj} the operator $\widetilde{\mathbf A}\in\Psi^0_h(M;\End(\mathcal E))$
has the same wavefront set as $\mathbf A$. It follows
that $\WFh(\widetilde{\mathbf A})\subset \Ell_h(\widetilde{\mathbf P}-ih\lambda-i\widetilde{\mathbf Q})$.
Now~\eqref{e:our-elliptic-2}
follows from the standard elliptic estimate~\cite[Theorem~E.33]{DZ-Book}
whose proof adapts directly to the case of operators on vector bundles.

%%%%%%%%%%%%%%%%%%%%%%%%%%%%%%%%%%%%%%%%%%%%%%%%%%%%%%%%%%%%%%%%%%%%%%%%%%%%%%%%
\subsubsection{Propagation of singularities}
\label{s:propagation}

Our next estimate is propagation of singularities:
%%%%%%%%%%%%%%%%%%%%%%%%%%%%%%%%%%%%%%%%%%%%%%%%%%%%%%%%%%%%%%%%%%%%%%%%%%%%%%%%
\begin{lemm}
  \label{l:our-propagation}
Assume that $\mathbf A,\mathbf B,\mathbf B_1\in\Psi^0_h(M;\End(\mathcal E))$ and
the following control condition holds:
$$
\begin{gathered}
\text{for all }(x,\xi)\in \WFh(\mathbf A)\quad\text{there exists }T\geq 0\quad\text{such that}\\
e^{-TH_p}(x,\xi)\in\Ell_h(\mathbf B)\quad\text{and}\quad
e^{-tH_p}(x,\xi)\in\Ell_h(\mathbf B_1)\quad\text{for all }t\in [0,T].
\end{gathered}
$$
Then 
$$
\|\mathbf A\mathbf u\|_{H^{\mathfrak m}_h}
\leq C\|\mathbf B\mathbf u\|_{H^{\mathfrak m}_h}
+Ch^{-1}\|\mathbf B_1(\mathbf P-ih\lambda-i\mathbf Q)\mathbf u\|_{H^{\mathfrak m}_h}
+\mathcal O(h^\infty)\|\mathbf u\|_{H^{-N}_h}.
$$
\end{lemm}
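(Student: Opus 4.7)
The plan is to adapt the standard scalar positive-commutator / escape-function argument (as in~\cite[\S E.4.4]{DZ-Book}) to the vector bundle setting, exploiting that $\widetilde{\mathbf P}$ and the cutoffs can be taken principally scalar.

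\textbf{Reduction to $L^2$.} Exactly as in the proof of Lemma~\ref{l:our-elliptic}, set $\mathbf v:=e^{\mathbf F_{\mathfrak m}}\mathbf u$ and $\widetilde{\mathbf A}:=e^{\mathbf F_{\mathfrak m}}\mathbf A e^{-\mathbf F_{\mathfrak m}}$, and similarly for $\mathbf B,\mathbf B_1,\mathbf Q$. By~\eqref{e:cutoff-conj} these conjugates all lie in $\Psi^0_h(M;\End(\mathcal E))$ and share the wavefront and elliptic sets of the originals, so the control hypothesis transfers verbatim to $\widetilde{\mathbf A},\widetilde{\mathbf B},\widetilde{\mathbf B}_1$. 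By~\eqref{e:P-conj-symb} the operator $\widetilde{\mathbf P}\in\Psi^1_h(M;\End(\mathcal E))$ is principally scalar with real principal symbol $p$ and imaginary subprincipal symbol $h(H_p\mathfrak m)\log|\xi|\leq 0$—the favorable sign being precisely the point of the weight condition $H_p\mathfrak m\leq 0$. It suffices to prove the $L^2$ version of the estimate for $(\widetilde{\mathbf P}-ih\lambda-i\widetilde{\mathbf Q})$ in place of $(\mathbf P-ih\lambda-i\mathbf Q)$.

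\textbf{Escape function and commutator.} Using the control hypothesis I would construct a nonnegative $h$-independent symbol $e\in S^0(T^*M;\mathbb R)$ of compact fibre support, with $\supp e\subset\Ell_h(\widetilde{\mathbf B}_1)$, satisfying
$$-H_p e\ \geq\ c\,\chi_A^2-C\,\chi_B^2$$
for some nonnegative $\chi_A,\chi_B\in S^0$ with $\chi_A$ elliptic on $\WFh(\widetilde{\mathbf A})$ and $\supp\chi_B\subset\Ell_h(\widetilde{\mathbf B})$; this is a completely standard construction (integrate a cutoff along the backwards Hamilton flow $e^{-tH_p}$ up to the time $T(x,\xi)$ supplied by the control condition, cover $\WFh(\widetilde{\mathbf A})$ by finitely many such pieces, and mollify). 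Quantize $e$ as a self-adjoint, principally scalar $\mathbf E\in\Psi^0_h(M;\End(\mathcal E))$ with $\WFh(\mathbf E)\subset\Ell_h(\widetilde{\mathbf B}_1)$, and expand
$$\Im\langle(\widetilde{\mathbf P}-ih\lambda-i\widetilde{\mathbf Q})\mathbf v,\mathbf E\mathbf v\rangle=\tfrac{1}{2i}\langle[\mathbf E,\widetilde{\mathbf P}]\mathbf v,\mathbf v\rangle+\langle(\Im\widetilde{\mathbf P})\mathbf E\mathbf v,\mathbf v\rangle-h(\Re\lambda)\langle\mathbf E\mathbf v,\mathbf v\rangle-\Re\langle\widetilde{\mathbf Q}\mathbf E\mathbf v,\mathbf v\rangle.$$
Because $\mathbf E$ and $\widetilde{\mathbf P}$ are both principally scalar, the Commutator Rule~\eqref{e:commutator-general} yields $\tfrac{1}{2i}[\mathbf E,\widetilde{\mathbf P}]=\tfrac{h}{2}\Op_h(H_p e)$ modulo $h^2\Psi^{-1}_h(M;\End(\mathcal E))$, with scalar principal symbol.

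\textbf{Assembling the estimate.} Apply Lemma~\ref{l:sharp-Garding} separately to the nonnegative scalar symbols $-H_p e-c\chi_A^2+C\chi_B^2$, $-e\cdot h(H_p\mathfrak m)\log|\xi|$, and $e\cdot\sigma_h(\widetilde{\mathbf Q})$ (each times the identity on $\pi^*\mathcal E$), using $\widetilde{\mathbf B}_1$ in the role of the cutoff since $\supp e\subset\Ell_h(\widetilde{\mathbf B}_1)$. Bounding the left-hand side by Cauchy--Schwarz with a Peter--Paul split in $h$ and using $\WFh(\mathbf E)\subset\Ell_h(\widetilde{\mathbf B}_1)$ to insert $\widetilde{\mathbf B}_1$ beside $(\widetilde{\mathbf P}-ih\lambda-i\widetilde{\mathbf Q})\mathbf v$ up to an $\mathcal O(h^\infty)\|\mathbf v\|_{H^{-N}_h}$ error, I would get
$$ch\|\widetilde{\mathbf A}\mathbf v\|_{L^2}^2\leq C h\|\widetilde{\mathbf B}\mathbf v\|_{L^2}^2+Ch^{-1}\|\widetilde{\mathbf B}_1(\widetilde{\mathbf P}-ih\lambda-i\widetilde{\mathbf Q})\mathbf v\|_{L^2}^2+\mathcal O(h^\infty)\|\mathbf v\|_{H^{-N}_h}^2,$$
from which the target bound follows after dividing by $h$ and taking square roots.

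\textbf{Main obstacle.} The genuinely new point over the scalar case is ensuring that the non-scalar matrix contributions from the lift $\mathbf X$ (visible in $H_{\mathbf X}$ of Lemma~\ref{l:commutator-computed}) do not contaminate the main term. This is exactly why the cutoffs and the escape-function quantization $\mathbf E$ must be chosen \emph{principally scalar}, as flagged in the introduction: the Commutator Rule~\eqref{e:commutator-general} then delivers the clean scalar $H_p e$, and the parallel-transport / divergence terms appear only at the subprincipal level, absorbable into $\|\widetilde{\mathbf B}_1\mathbf v\|_{L^2}^2$ via the standard elliptic estimates or into the $\mathcal O(h^\infty)\|\mathbf v\|_{H^{-N}_h}$ remainder. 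Once that choice is made, the argument matches the scalar propagation estimate step for step, with the favorable signs of $\Im\widetilde{\mathbf P}$ and $\widetilde{\mathbf Q}$ built into the hypotheses $H_p\mathfrak m\leq 0$ and $\sigma_h(\mathbf Q)\geq 0$ respectively.
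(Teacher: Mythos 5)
Your proposal follows essentially the same route as the paper: reduce to the conjugated operator $\widetilde{\mathbf P}-ih\lambda-i\widetilde{\mathbf Q}$ acting on $L^2$, observe that it is principally scalar with $\Re\sigma_h=p$, $\Im\sigma_h\leq 0$ (from $H_p\mathfrak m\leq 0$) and $\sigma_h(\widetilde{\mathbf Q})\geq 0$, and then run the standard scalar positive-commutator propagation argument with principally scalar multipliers, using Lemma~\ref{l:sharp-Garding} as the sharp G\r arding input---which is precisely how the paper reduces the lemma to the scalar result~\cite[Theorem~E.47]{DZ-Book}. The only caveat is that your one-shot display suppresses the $\mathcal O(h)\|\cdot\|^2$ G\r arding and matrix-subprincipal remainders (they cannot literally be ``absorbed into $\|\widetilde{\mathbf B}_1\mathbf v\|_{L^2}^2$'', since no such term is allowed in the statement); their removal is the time-step iteration already built into the standard proof you invoke, so this is a presentational gloss rather than a genuine gap.
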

%%%%%%%%%%%%%%%%%%%%%%%%%%%%%%%%%%%%%%%%%%%%%%%%%%%%%%%%%%%%%%%%%%%%%%%%%%%%%%%%
Similarly to~\S\ref{s:elliptic}, Lemma~\ref{l:our-propagation} can be reduced
to an estimate in the space $L^2$ for the conjugated operator
$\widetilde{\mathbf P}-ih\lambda-i\widetilde{\mathbf Q}$. The latter estimate
is proved using the same positive commutator estimate as standard scalar propagation of singularities~\cite[Theorem~E.47]{DZ-Book}, using
a principally scalar multiplier~$\mathbf G$, given that:
\begin{itemize}
\item $\widetilde{\mathbf P}-ih\lambda-i\widetilde{\mathbf Q}\in\Psi^1_h(M;\End(\mathcal E))$
is principally scalar;
\item $\Re\sigma_h(\widetilde{\mathbf P}-ih\lambda-i\widetilde{\mathbf Q})=p$;
\item $\Im\sigma_h(\widetilde{\mathbf P}-ih\lambda-i\widetilde{\mathbf Q})\leq 0$.
Indeed, $\Im\sigma_h(\widetilde{\mathbf P})=h(H_p\mathfrak m)\log|\xi|$ by~\eqref{e:P-conj-symb} and $H_p\mathfrak m\leq 0$ as required in the beginning of~\S\ref{s:anisotropic-spaces}.
Moreover, $\sigma_h(\widetilde{\mathbf Q})=\sigma_h(\mathbf Q)\geq 0$;
\item the sharp G\r arding inequality applies to principally scalar operators
in $\Psi^{2m}_h(M;\End(\mathcal E))$ with nonnegative principal symbol,
as follows for example from Lemma~\ref{l:sharp-Garding}.
\end{itemize}

%%%%%%%%%%%%%%%%%%%%%%%%%%%%%%%%%%%%%%%%%%%%%%%%%%%%%%%%%%%%%%%%%%%%%%%%%%%%%%%%
\subsubsection{Radial estimates}
\label{s:radial}

We now prove the two radial estimates that are crucial in the proof of Lemma~\ref{l:invertibility}. This is the place in the argument where the threshold
regularity condition~\eqref{e:inv-threshold} is important.
Recall the
sets $\widetilde V_u,\widetilde V_s\subset\overline{T^*M}$ introduced in~\eqref{e:tilde-V}.

We start with the high regularity radial estimate at the set
$\kappa(E_s^*)\subset \partial\overline{T^*M}$. 
%%%%%%%%%%%%%%%%%%%%%%%%%%%%%%%%%%%%%%%%%%%%%%%%%%%%%%%%%%%%%%%%%%%%%%%%%%%%%%%%
\begin{lemm}
  \label{l:radial-high}
There exist operators
$$
\mathbf A_s,\mathbf B_{1,s}\in\Psi^0_h(M;\End(\mathcal E)),\quad
\kappa(E_s^*)\subset\Ell_h(\mathbf A_s),\quad
\WFh(\mathbf A_s)\cup\WFh(\mathbf B_{1,s})\subset \widetilde V_s
$$
such that the following estimate holds:
\begin{equation}
  \label{e:radial-high}
\|\mathbf A_s\mathbf u\|_{H^{\mathfrak m}_h}
\leq Ch^{-1}\|\mathbf B_{1,s}(\mathbf P-ih\lambda-i\mathbf Q)\mathbf u\|_{H^{\mathfrak m}_h}
+\mathcal O(h^\infty)\|\mathbf u\|_{H^{-N}_h}.
\end{equation}
\end{lemm}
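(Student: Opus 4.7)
The plan is to prove~\eqref{e:radial-high} by a positive commutator argument at the radial source $\kappa(E_s^*)$, with the multiplier supplied by Lemma~\ref{l:multipliers}. First I would pick a cutoff $\chi\in C^\infty(\overline{T^*M};[0,1])$ with $\chi=1$ in a small neighborhood of $\kappa(E_s^*)$, $\supp\chi\subset\widetilde V_s$, and $H_p\chi\leq 0$ everywhere; such a $\chi$ is produced by the averaging construction used for $\chi_s$ in the proof of Lemma~\ref{l:weight}, since $\kappa(E_s^*)$ is a radial source. Let $\mathbf w_s$ be the section from Lemma~\ref{l:multipliers}; after shrinking $\widetilde V_s$ if necessary, $(H_{\mathbf X}-2\Re\lambda)\mathbf w_s$ is uniformly negative definite on $\widetilde V_s$ by continuity. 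I then quantize $\chi\mathbf w_s$ to a self-adjoint, principally scalar $\mathbf W\in\Psi^{2m_s}_h(M;\End(\mathcal E))$ with $\sigma_h(\mathbf W)=\chi\mathbf w_s$ and $\WFh(\mathbf W)\subset\supp\chi$.

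Applying Lemma~\ref{l:commutator-computed} and the Leibniz rule for the lift $H_{\mathbf X}$, I obtain the pairing identity
$$\Im\langle(\mathbf P-ih\lambda)\mathbf u,\mathbf W\mathbf u\rangle=h\langle(\mathbf Z-(\Re\lambda)\mathbf W)\mathbf u,\mathbf u\rangle,\qquad\sigma_h(\mathbf Z-(\Re\lambda)\mathbf W)=\tfrac{1}{2}(H_p\chi)\mathbf w_s+\tfrac{1}{2}\chi(H_{\mathbf X}-2\Re\lambda)\mathbf w_s.$$
Both summands of the principal symbol are self-adjoint and pointwise nonpositive (the first because $H_p\chi\leq 0$ with $\mathbf w_s$ positive definite, the second because $\chi\geq 0$ with $(H_{\mathbf X}-2\Re\lambda)\mathbf w_s$ negative definite on $\supp\chi$), and the second summand is uniformly negative definite of order $2m_s$ on the set $\{\chi\equiv 1\}\ni\kappa(E_s^*)$. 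Extracting a scalar elliptic positive piece $c\chi_0|\xi|^{2m_s}I_{\mathcal E}$ supported where $\chi\equiv 1$ and applying Lemma~\ref{l:sharp-Garding} to the nonpositive remainder (which still has the factored form required by~\eqref{e:symbol-assume-more}) yields
$$-h\langle(\mathbf Z-(\Re\lambda)\mathbf W)\mathbf u,\mathbf u\rangle\ \geq\ ch\|\mathbf A_s\mathbf u\|^2_{H^{m_s}_h}-Ch^2\|\mathbf B_{1,s}\mathbf u\|^2_{H^{m_s-1/2}_h}-\mathcal O(h^\infty)\|\mathbf u\|^2_{H^{-N}_h}$$
for principally scalar $\mathbf A_s,\mathbf B_{1,s}\in\Psi^0_h(M;\End(\mathcal E))$ chosen so that $\kappa(E_s^*)\subset\Ell_h(\mathbf A_s)\subset\WFh(\mathbf A_s)\subset\{\chi_0>0\}$ and $\WFh(\mathbf W)\subset\Ell_h(\mathbf B_{1,s})\subset\WFh(\mathbf B_{1,s})\subset\widetilde V_s$. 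The absorbing term $-i\mathbf Q$ contributes only $\mathcal O(h^\infty)\|\mathbf u\|^2_{H^{-N}_h}$ because~\eqref{e:Q-tilde-V} gives $\WFh(\mathbf Q)\cap\WFh(\mathbf W)=\emptyset$.

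To finish, I would bound the left-hand pairing by Cauchy--Schwarz together with $\|\mathbf W\mathbf u\|_{H^{-m_s}_h}\leq C\|\mathbf A_s\mathbf u\|_{H^{m_s}_h}+\mathcal O(h^\infty)\|\mathbf u\|_{H^{-N}_h}$ (elliptic estimate, valid since $\WFh(\mathbf W)\subset\Ell_h(\mathbf A_s)$), absorb the resulting $\|\mathbf A_s\mathbf u\|^2$ by $2ab\leq\tfrac{ch}{2}a^2+\tfrac{2}{ch}b^2$, dispose of the residual $\|\mathbf B_{1,s}\mathbf u\|_{H^{m_s-1/2}_h}$ term by the usual radial-source bootstrap (iterating in regularity through the half-derivative gain supplied by each sharp G\r arding step until the remainder falls below any prescribed Sobolev level), and finally convert $H^{m_s}_h$ to $H^{\mathfrak m}_h$ on $\widetilde V_s$ via~\eqref{e:anis-equiv}. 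The main obstacle, and the sole place where the threshold hypothesis~\eqref{e:inv-threshold} enters, is securing a multiplier whose derivative along the lifted flow is negative definite at the source: this is exactly what Lemma~\ref{l:multipliers} provides from $\Re\lambda>r_s(m_s)$, and accommodating the parallel-transport factor $\mathscr T_{\mathbf X}^t$ appearing in Definition~\ref{d:growth-factor} is what distinguishes the present vector-valued radial estimate from its scalar counterpart.
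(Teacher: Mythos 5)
Your overall strategy (positive commutator at $\kappa(E_s^*)$ with the multiplier from Lemma~\ref{l:multipliers}, sharp G\r arding via Lemma~\ref{l:sharp-Garding}, reduction to $H^{m_s}_h$ on $\widetilde V_s$ and dropping $\mathbf Q$ by~\eqref{e:Q-tilde-V}) is the right one and matches the paper, but the way you close the estimate has a genuine gap. You quantize $\chi\mathbf w_s$ directly to $\mathbf W$ (incidentally, this $\mathbf W$ is \emph{not} principally scalar, though Lemma~\ref{l:commutator-computed} only needs $\mathbf W^*=\mathbf W$, so that slip is harmless), extract a positive piece $c\chi_0|\xi|^{2m_s}I$ supported in $\{\chi\equiv 1\}$, and then claim $\|\mathbf W\mathbf u\|_{H^{-m_s}_h}\leq C\|\mathbf A_s\mathbf u\|_{H^{m_s}_h}+\mathcal O(h^\infty)\|\mathbf u\|_{H^{-N}_h}$ ``since $\WFh(\mathbf W)\subset\Ell_h(\mathbf A_s)$.'' That inclusion is false in your own setup: $\WFh(\mathbf W)\subset\supp\chi$, while $\Ell_h(\mathbf A_s)\subset\WFh(\mathbf A_s)\subset\{\chi_0>0\}\subset\{\chi\equiv1\}$, so the region $\{0<\chi<1\}$ is not covered. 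Hence the Cauchy--Schwarz factor coming from the pairing is \emph{not} controlled by the good term you extracted, and the absorption step does not close. The paper avoids exactly this mismatch by taking the multiplier in factored form $\mathbf W=\mathbf G_s^*\mathbf G_s$ with $\sigma_h(\mathbf G_s)=\chi_s\sqrt{\mathbf w_s}$ and by building the margin $\delta$ into the symbol inequality, so that the pairing equals $\langle\mathbf G_s(\mathbf P-ih\lambda)\mathbf u,\mathbf G_s\mathbf u\rangle$ and the extra term $\delta\|\mathbf G_s\mathbf u\|_{L^2}^2$ is \emph{exactly} the quantity needed to absorb the Cauchy--Schwarz factor on all of $U_s$. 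If you insist on quantizing $\chi\mathbf w_s$ directly, you would instead have to bound $\|\mathbf W\mathbf u\|_{H^{-m_s}_h}$ by $\|\mathbf B_{1,s}\mathbf u\|_{H^{m_s}_h}$, use an $\varepsilon$-Cauchy--Schwarz, and then control $\|\mathbf B_{1,s}\mathbf u\|_{H^{m_s}_h}$ by $\|\mathbf A_s\mathbf u\|_{H^{m_s}_h}$ plus the source term via propagation of singularities; as written, your argument contains neither device.

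The second gap is your treatment of the half-derivative-loss term $h^{1/2}\|\mathbf B_{1,s}\mathbf u\|_{H^{m_s-1/2}_h}$ by ``the usual radial-source bootstrap, iterating in regularity.'' At a radial source the threshold condition \emph{degrades} as the regularity decreases: $r_s(m)$ is nonincreasing in $m$, so $\Re\lambda>r_s(m_s)$ does not imply $\Re\lambda>r_s(m_s-1/2)$, and the commutator construction cannot simply be rerun at lower regularity. Moreover, even ignoring thresholds, iterating in regularity does not address the microsupport mismatch: the term lives on all of $U_s$ while your elliptic control sits only near $\kappa(E_s^*)$. The correct mechanism (used in the paper, see~\eqref{e:radhigh-6}) is propagation of singularities, Lemma~\ref{l:our-propagation}: for $U_s$ small, backward flow carries $\WFh(\mathbf B_{2,s})$ into $\Ell_h(\mathbf A_s)$ while staying in $\widetilde V_s$, giving $\|\mathbf B_{2,s}\mathbf u\|_{H^{m_s-1/2}_h}\leq C\|\mathbf A_s\mathbf u\|_{H^{m_s-1/2}_h}+Ch^{-1}\|\mathbf B_{1,s}(\mathbf P-ih\lambda)\mathbf u\|_{H^{m_s}_h}+\dots$, after which the prefactor $h^{1/2}$ lets you absorb into the left-hand side for $h$ small. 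You need to replace the iteration step by this propagation-plus-absorption argument (and, less importantly, state the negativity~\eqref{e:radhigh-2} with a margin $\delta$ uniformly over $\lambda\in\Omega$ on a fixed small $U_s\subset\widetilde V_s$ rather than ``shrinking $\widetilde V_s$,'' since $\widetilde V_s$ was already fixed in~\eqref{e:tilde-V} and~\eqref{e:Q-tilde-V}).
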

%%%%%%%%%%%%%%%%%%%%%%%%%%%%%%%%%%%%%%%%%%%%%%%%%%%%%%%%%%%%%%%%%%%%%%%%%%%%%%%%
\begin{proof}
1. Since $H^{\mathfrak m}_h$ is equivalent to $H^{m_s}_h$ microlocally
on $\widetilde V_s$ (see~\eqref{e:anis-equiv}) and
$\WFh(\mathbf Q)\cap\widetilde V_s=\emptyset$ (see~\eqref{e:Q-tilde-V}),
it suffices to show the estimate
\begin{equation}
  \label{e:radial-high-2}
\|\mathbf A_s\mathbf u\|_{H^{m_s}_h}
\leq Ch^{-1}\|\mathbf B_{1,s}(\mathbf P-ih\lambda)\mathbf u\|_{H^{m_s}_h}
+\mathcal O(h^\infty)\|\mathbf u\|_{H^{-N}_h}.
\end{equation}

\noindent 2. We now follow the proof of~\cite[Theorem~E.52]{DZ-Book}, indicating the necessary changes.
Since the threshold condition~\eqref{e:inv-threshold} holds,
Lemma~\ref{l:multipliers} applies to give a section $\mathbf w_s\in C^\infty(T^*M\setminus 0;\End(\pi^*\mathcal E))$ which is positive definite everywhere, positively homogeneous of
degree~$2m_s$, and satisfies
(where `$\boldsymbol<$\,0' means `negative definite')
\begin{equation}
  \label{e:radhigh-1}
(H_{\mathbf X}-2\Re\lambda)\mathbf w_s(x,\xi)\boldsymbol< 0\quad\text{for all}\quad
\lambda\in\Omega,\quad
(x,\xi)\in E_s^*\setminus 0.
\end{equation}
Fix an open set $U_s\subset \widetilde V_s$ such that
$\kappa(E_s^*)\subset U_s$ and there exists $\delta>0$ such that
\begin{equation}
  \label{e:radhigh-2}
(\textstyle{1\over 2}H_{\mathbf X}-\Re\lambda+\delta)\mathbf w_s(x,\xi)\boldsymbol< 0\quad\text{for all}\quad
\lambda\in \Omega,\quad
(x,\xi)\in U_s.
\end{equation}
Arguing as in the proof of Lemma~\ref{l:weight}
(see also~\cite[Lemma~E.53]{DZ-Book}), we construct a function
\begin{equation}
  \label{e:radhigh-3}
\chi_s\in \CIc(U_s;[0,1]),\qquad
\chi_s=1\quad\text{near}\quad \kappa(E_s^*),\qquad
H_p\chi_s\leq 0.
\end{equation}
Denote by $\sqrt{\mathbf w_s}$ the square root of $\mathbf w_s$, which
is a positive definite section in $C^\infty(T^*M\setminus 0;\End(\pi^*\mathcal E))$
and positively homogeneous of degree~$m_s$.
Define
$$
\mathbf g_s:=\chi_s \sqrt{\mathbf w_s}\ \in\ C^\infty(T^*M;\End(\pi^*\mathcal E))
$$
and note that $\mathbf g_s$ lies in the symbol class $S^{m_s}$.

\noindent 3. Take a pseudodifferential operator
$$
\mathbf G_s\in\Psi^{m_s}_h(M;\End(\mathcal E)),\quad
\WFh(\mathbf G_s)\subset U_s,\quad
\sigma_h(\mathbf G_s)=\mathbf g_s.
$$
Note that $\mathbf G_s$ is elliptic on $\kappa(E_s^*)$.
Fix also operators
$\mathbf A_s,\mathbf B_{2,s}\in\Psi^0_h(M;\End(\mathcal E))$ such that
$$
\kappa(E_s^*)\subset\Ell_h(\mathbf A_s),\quad
\WFh(\mathbf A_s)\subset\Ell_h(\mathbf G_s),\quad
\WFh(\mathbf G_s)\subset \Ell_h(\mathbf B_{2,s}),\quad
\WFh(\mathbf B_{2,s})\subset U_s.
$$
By Lemma~\ref{l:commutator-computed}, we have
\begin{equation}
  \label{e:radhigh-3.5}
h^{-1}\Im\langle (\mathbf P-ih\lambda)\mathbf u,\mathbf G_s^*\mathbf G_s\mathbf u\rangle_{L^2}
+\delta\|\mathbf G_s\mathbf u\|_{L^2}^2
=\langle\mathbf Z_s \mathbf u,\mathbf u\rangle_{L^2}
\end{equation}
where
$$
\mathbf Z_s\in \Psi^{2m_s}(M;\End(\mathcal E)),\quad
\mathbf Z_s^*=\mathbf Z_s,\quad
\WFh(\mathbf Z_s)\subset \Ell_h(\mathbf B_{2,s})
$$
has principal symbol
\begin{equation}
  \label{e:radhigh-4}
\sigma_h(\mathbf Z_s)=(\textstyle{1\over 2}H_{\mathbf X}-\Re\lambda+\delta)(\chi_s^2\mathbf w_s)
=\chi_s(H_p\chi_s)\mathbf w_s+\chi_s^2(\textstyle{1\over 2}H_{\mathbf X}-\Re\lambda+\delta)\mathbf w_s.
\end{equation}
By~\eqref{e:radhigh-2}--\eqref{e:radhigh-3},
each of the two summands on the right-hand side of~\eqref{e:radhigh-4}
is the product of a nonnegative function in $\CIc(U_s)$ and a self-adjoint section of $\End(\pi^*\mathcal E)$ which is positively homogeneous of degree $2m_s$ and negative definite on $U_s$.
Thus the version of the sharp G\r arding inequality given in Lemma~\ref{l:sharp-Garding}
gives
$$
\langle\mathbf Z_s\mathbf u,\mathbf u\rangle_{L^2}\leq Ch\|\mathbf B_{2,s}\mathbf u\|_{H^{m_s-{1\over 2}}_h}^2
+\mathcal O(h^\infty)\|\mathbf u\|_{H^{-N}_h}^2.
$$
Together with~\eqref{e:radhigh-3.5} this implies
$$
\|\mathbf G_s \mathbf u\|_{L^2}^2\leq Ch^{-1}\|\mathbf B_{2,s}(\mathbf P-ih\lambda)\mathbf u\|_{H^{m_s}_h}\cdot\|\mathbf G_s\mathbf u\|_{L^2}+Ch\|\mathbf B_{2,s}\mathbf u\|_{H^{m_s-{1\over 2}}_h}^2
+\mathcal O(h^\infty)\|\mathbf u\|_{H^{-N}_h}^2
$$
which gives the estimate
\begin{equation}
  \label{e:radhigh-5}
\|\mathbf G_s\mathbf u\|_{L^2}\leq Ch^{-1}\|\mathbf B_{2,s}(\mathbf P-ih\lambda)\mathbf u\|_{H^{m_s}_h}
+Ch^{1\over 2}\|\mathbf B_{2,s}\mathbf u\|_{H^{m_s-{1\over 2}}_h}+\mathcal O(h^\infty)\|\mathbf u\|_{H^{-N}_h}.
\end{equation}

\noindent 4.
We now argue similarly to step~2 of the proof of~\cite[Theorem~E.52]{DZ-Book}.
By the elliptic estimate we can replace $\|\mathbf G_s\mathbf u\|_{L^2}$ on the left-hand side of~\eqref{e:radhigh-5} by $\|\mathbf A_s\mathbf u\|_{H^{m_s}_h}$.
If the set~$U_s$ is chosen small enough, then
propagation of singularities gives
\begin{equation}
  \label{e:radhigh-6}
\|\mathbf B_{2,s}\mathbf u\|_{H^{m_s-{1\over 2}}_h}\leq
C \|\mathbf A_s\mathbf u\|_{H^{m_s-{1\over 2}}_h}
+Ch^{-1}\|\mathbf B_{1,s}(\mathbf P-ih\lambda)\mathbf u\|_{H^{m_s}_h}
+\mathcal O(h^\infty)\|\mathbf u\|_{H^{-N}_h}
\end{equation}
for some $\mathbf B_{1,s}\in\Psi^0_h(M;\End(\mathcal E))$ such that
$$
\WFh(\mathbf B_{2,s})\subset \Ell_h(\mathbf B_{1,s}),\quad
\WFh(\mathbf B_{1,s})\subset \widetilde V_s.
$$
Combining~\eqref{e:radhigh-5}
and~\eqref{e:radhigh-6} and taking $h$ small enough,
we get~\eqref{e:radial-high-2}.
\end{proof}
%%%%%%%%%%%%%%%%%%%%%%%%%%%%%%%%%%%%%%%%%%%%%%%%%%%%%%%%%%%%%%%%%%%%%%%%%%%%%%%%
We next give the low regularity radial estimate at the set $\kappa(E_u^*)$:
%%%%%%%%%%%%%%%%%%%%%%%%%%%%%%%%%%%%%%%%%%%%%%%%%%%%%%%%%%%%%%%%%%%%%%%%%%%%%%%%
\begin{lemm}
  \label{l:radial-low}
There exist operators
$$
\begin{gathered}
\mathbf A_u,\mathbf B_u,\mathbf B_{1,u}\in\Psi^0_h(M;\End(\mathcal E)),\quad
\kappa(E_u^*)\subset\Ell_h(\mathbf A_u),\\
\WFh(\mathbf A_u)\cup\WFh(\mathbf B_{1,u})\subset \widetilde V_u,\quad
\WFh(\mathbf B_u)\subset \widetilde V_u\setminus \kappa(E_u^*)
\end{gathered}
$$
such that the following estimate holds:
\begin{equation}
  \label{e:radial-low}
\|\mathbf A_u\mathbf u\|_{H^{\mathfrak m}_h}
\leq C\|\mathbf B_u\mathbf u\|_{H^{\mathfrak m}_h}+
Ch^{-1}\|\mathbf B_{1,u}(\mathbf P-ih\lambda-i\mathbf Q)\mathbf u\|_{H^{\mathfrak m}_h}
+\mathcal O(h^\infty)\|\mathbf u\|_{H^{-N}_h}.
\end{equation}
\end{lemm}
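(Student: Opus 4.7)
I plan to adapt the positive commutator scheme of Lemma~\ref{l:radial-high} to the radial sink $\kappa(E_u^*)$, where the sign of $H_p\chi$ on the cutoff changes, forcing the appearance of the control term $\mathbf B_u$. First I reduce to an estimate in $H^{m_u}_h$: since $\mathfrak m = m_u$ throughout $\widetilde V_u$ by the setup of~\eqref{e:tilde-V} and $\WFh(\mathbf Q)\cap\widetilde V_u = \emptyset$ by~\eqref{e:Q-tilde-V}, operators microlocalized in $\widetilde V_u$ act on $H^{\mathfrak m}_h$ and $H^{m_u}_h$ equivalently, and $\mathbf Q$ can be dropped modulo $\mathcal O(h^\infty)$ remainders. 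The threshold condition~\eqref{e:inv-threshold} lets me apply Lemma~\ref{l:multipliers} to get a positive definite section $\mathbf w_u \in C^\infty(T^*M\setminus 0;\End(\pi^*\mathcal E))$, homogeneous of degree $2m_u$, with $(H_{\mathbf X} - 2\Re\lambda)\mathbf w_u$ negative definite on $E_u^*\setminus 0$; shrinking, I pick $\delta>0$ and an open conic neighborhood $U_u\subset \widetilde V_u$ of $\kappa(E_u^*)$ on which $(\tfrac{1}{2}H_{\mathbf X} - \Re\lambda + \delta)\mathbf w_u$ is still negative definite uniformly in $\lambda\in\Omega$. Repeating the dynamical averaging argument of Lemma~\ref{l:weight} at the sink, I build $\chi_u\in \CIc(U_u;[0,1])$ with $\chi_u = 1$ near $\kappa(E_u^*)$ and $H_p\chi_u\geq 0$ everywhere, quantize $\mathbf g_u := \chi_u\sqrt{\mathbf w_u}$ to an operator $\mathbf G_u\in \Psi^{m_u}_h(M;\End(\mathcal E))$ elliptic on $\{\chi_u > 0\}$, and choose $\mathbf A_u\in\Psi^0_h$ with $\kappa(E_u^*)\subset \Ell_h(\mathbf A_u)$ and $\WFh(\mathbf A_u)\subset \Ell_h(\mathbf G_u)$.

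Applying Lemma~\ref{l:commutator-computed} to $\mathbf W := \mathbf G_u^*\mathbf G_u$ and adding $\delta\|\mathbf G_u\mathbf u\|_{L^2}^2$ yields the identity
\[
\delta\|\mathbf G_u\mathbf u\|_{L^2}^2 = \langle \mathbf R\mathbf u,\mathbf u\rangle_{L^2} - h^{-1}\Im\langle (\mathbf P - ih\lambda)\mathbf u, \mathbf G_u^*\mathbf G_u\mathbf u\rangle_{L^2},
\]
with $\mathbf R\in \Psi^{2m_u}_h(M;\End(\mathcal E))$ self-adjoint and
\[
\sigma_h(\mathbf R) = \chi_u(H_p\chi_u)\mathbf w_u + \chi_u^2\bigl(\tfrac{1}{2}H_{\mathbf X} - \Re\lambda + \delta\bigr)\mathbf w_u.
\]
The key sign reversal relative to the source case is that at a sink $H_p\chi_u\geq 0$, so the first summand is \emph{positive} semidefinite rather than negative. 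However its support lies where $\chi_u$ is not locally constant, hence disjoint from $\kappa(E_u^*)$, and I can pick $\mathbf B_u\in\Psi^0_h$ elliptic on this support with $\WFh(\mathbf B_u)\subset \widetilde V_u\setminus\kappa(E_u^*)$. Factoring $\chi_u(H_p\chi_u)\mathbf w_u = \mathbf f^*\mathbf f$ with $\mathbf f\in S^{m_u}$ supported in $\Ell_h(\mathbf B_u)$ and quantizing, the elliptic estimate yields $|\langle \mathbf R^{\mathrm{bad}}\mathbf u,\mathbf u\rangle|\leq C\|\mathbf B_u\mathbf u\|_{H^{m_u}_h}^2 + \mathcal O(h^\infty)\|\mathbf u\|_{H^{-N}_h}^2$ for the quantization $\mathbf R^{\mathrm{bad}}$ of the first summand. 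The second summand fits the hypotheses of Lemma~\ref{l:sharp-Garding} applied to its negative, giving $\langle \mathbf R^{\mathrm{good}}\mathbf u,\mathbf u\rangle \leq Ch\|\mathbf B_{2,u}\mathbf u\|_{H^{m_u-1/2}_h}^2 + \mathcal O(h^\infty)\|\mathbf u\|_{H^{-N}_h}^2$ for an auxiliary $\mathbf B_{2,u}\in \Psi^0_h$ whose elliptic set contains $\WFh(\mathbf G_u)$ and whose wavefront set lies in $\Ell_h(\mathbf G_u)$.

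Combining these two bounds with Cauchy--Schwarz applied to the $\Im$-term, after moving one $\mathbf G_u$ across the pairing and inserting an elliptic $\mathbf B_{1,u}\in\Psi^0_h$ with $\WFh(\mathbf G_u)\subset \Ell_h(\mathbf B_{1,u})$ and $\WFh(\mathbf B_{1,u})\subset \widetilde V_u$, produces
\[
\tfrac{\delta}{2}\|\mathbf G_u\mathbf u\|_{L^2}^2 \leq Ch^{-2}\|\mathbf B_{1,u}(\mathbf P - ih\lambda)\mathbf u\|_{H^{m_u}_h}^2 + C\|\mathbf B_u\mathbf u\|_{H^{m_u}_h}^2 + Ch\|\mathbf G_u\mathbf u\|_{L^2}^2 + \mathcal O(h^\infty)\|\mathbf u\|_{H^{-N}_h}^2,
\]
where the elliptic estimate $\|\mathbf B_{2,u}\mathbf u\|_{H^{m_u-1/2}_h}\leq C\|\mathbf G_u\mathbf u\|_{L^2} + \mathcal O(h^\infty)\|\mathbf u\|_{H^{-N}_h}$ has been used on the sharp G\r arding remainder. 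Taking $h$ sufficiently small absorbs $Ch\|\mathbf G_u\mathbf u\|_{L^2}^2$ into the left, and a final elliptic estimate converts the resulting bound on $\|\mathbf G_u\mathbf u\|_{L^2}$ back into $\|\mathbf A_u\mathbf u\|_{H^{m_u}_h}$; undoing the initial reduction delivers~\eqref{e:radial-low}. The main obstacle is the wrong-sign piece $\chi_u(H_p\chi_u)\mathbf w_u$: unlike in Lemma~\ref{l:radial-high}, sharp G\r arding cannot be applied to $\sigma_h(\mathbf R)$ as a whole, and the necessity of absorbing this piece through an operator elliptic on $\widetilde V_u\setminus \kappa(E_u^*)$ is exactly what forces the $\mathbf B_u$ term into the statement.
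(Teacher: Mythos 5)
Your scheme is the paper's own: reduce to $H^{m_u}_h$ microlocally on $\widetilde V_u$, take the multiplier $\mathbf w_u$ from Lemma~\ref{l:multipliers}, apply Lemma~\ref{l:commutator-computed} with $\mathbf W=\mathbf G_u^*\mathbf G_u$, $\sigma_h(\mathbf G_u)=\chi_u\sqrt{\mathbf w_u}$, treat the term $\chi_u^2(\tfrac12 H_{\mathbf X}-\Re\lambda+\delta)\mathbf w_u$ by sharp G\r arding, and pay for the term $\chi_u(H_p\chi_u)\mathbf w_u$ with an operator $\mathbf B_u$ microlocalized in $\widetilde V_u\setminus\kappa(E_u^*)$. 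However, two steps as written do not go through, and the first is a genuine gap in the absorption argument.

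The auxiliary operator $\mathbf B_{2,u}$ you require~-- elliptic on $\WFh(\mathbf G_u)$ and with $\WFh(\mathbf B_{2,u})\subset\Ell_h(\mathbf G_u)$~-- does not exist. Since $\sigma_h(\mathbf G_u)=\chi_u\sqrt{\mathbf w_u}$ is nonvanishing on $\{\chi_u>0\}$, we have $\supp\chi_u\subset\WFh(\mathbf G_u)$, while $\Ell_h(\mathbf G_u)\subset\{\chi_u>0\}$; so the boundary points of $\supp\chi_u$ lie in $\WFh(\mathbf G_u)\setminus\Ell_h(\mathbf G_u)$, and any $\mathbf B_{2,u}$ elliptic at those points contains them in its wavefront set. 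Consequently the estimate $\|\mathbf B_{2,u}\mathbf u\|_{H^{m_u-1/2}_h}\leq C\|\mathbf G_u\mathbf u\|_{L^2}+\mathcal O(h^\infty)\|\mathbf u\|_{H^{-N}_h}$, which you use to recycle the sharp G\r arding remainder into $Ch\|\mathbf G_u\mathbf u\|_{L^2}^2$ and absorb it, is unjustified: near $\partial\supp\chi_u$ the multiplier $\mathbf G_u$ controls nothing. The paper's bookkeeping repairs exactly this point: it sets $\mathbf B_{1,u}:=\mathbf A_u^*\mathbf A_u+\mathbf B_u^*\mathbf B_u$ with $\WFh(\mathbf G_u)\subset\Ell_h(\mathbf A_u)\cup\Ell_h(\mathbf B_u)$, so the G\r arding remainder is bounded by $Ch\big(\|\mathbf A_u\mathbf u\|_{H^{m_u-1/2}_h}^2+\|\mathbf B_u\mathbf u\|_{H^{m_u-1/2}_h}^2\big)$; the $\mathbf A_u$ piece (whose wavefront set \emph{can} be put inside $\Ell_h(\mathbf G_u)$) is absorbed into the left-hand side for $h$ small, and the $\mathbf B_u$ piece simply joins the control term already present in~\eqref{e:radial-low}. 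With this splitting your final display becomes correct; without it, the step fails.

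A smaller point: the exact factorization $\chi_u(H_p\chi_u)\mathbf w_u=\mathbf f^*\mathbf f$ with $\mathbf f\in S^{m_u}$ need not exist, since the square root of the smooth nonnegative function $\chi_u H_p\chi_u$ need not be smooth. The paper avoids this by majorization: fix $\psi_u\in\CIc(U_u\setminus\kappa(E_u^*))$ with $\chi_u(H_p\chi_u)\leq|\psi_u|^2$, subtract $\|\mathbf E_u\mathbf u\|_{L^2}^2$ with $\sigma_h(\mathbf E_u)=\psi_u\sqrt{\mathbf w_u}$, and feed the resulting nonpositive symbol $(\chi_u(H_p\chi_u)-|\psi_u|^2)\mathbf w_u$ into the same sharp G\r arding application; $\|\mathbf E_u\mathbf u\|_{L^2}$ is then bounded by $\|\mathbf B_u\mathbf u\|_{H^{m_u}_h}$ via the elliptic estimate. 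Incidentally, your dynamical construction of $\chi_u$ with $H_p\chi_u\geq 0$ is not needed once you majorize (the paper takes an arbitrary cutoff), though it is harmless.
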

%%%%%%%%%%%%%%%%%%%%%%%%%%%%%%%%%%%%%%%%%%%%%%%%%%%%%%%%%%%%%%%%%%%%%%%%%%%%%%%%
\begin{proof}
1. We argue similarly to the proof of~\cite[Theorem~E.54]{DZ-Book}, making
changes similar to the proof of Lemma~\ref{l:radial-high}.
Since $H^{\mathfrak m}_h$ is equivalent
to $H^{m_u}_h$ microlocally on $\widetilde V_u$,
it suffices to show the estimate
\begin{equation}
  \label{e:radial-low-2}
\|\mathbf A_u\mathbf u\|_{H^{m_u}_h}
\leq C\|\mathbf B_u\mathbf u\|_{H^{m_u}_h}+
Ch^{-1}\|\mathbf B_{1,u}(\mathbf P-ih\lambda)\mathbf u\|_{H^{m_u}_h}
+\mathcal O(h^\infty)\|\mathbf u\|_{H^{-N}_h}.
\end{equation}

\noindent 2. Since the threshold condition~\eqref{e:inv-threshold} holds,
Lemma~\ref{l:multipliers} applies to give a section
$\mathbf w_u\in C^\infty(T^*M\setminus 0;\End(\pi^*\mathcal E))$
which is positive definite everywhere, positively homogeneous of degree~$2m_u$,
and satisfies
$$
(H_{\mathbf X}-2\Re\lambda)\mathbf w_u(x,\xi)\boldsymbol<0\quad\text{for all}\quad
\lambda\in\Omega,\quad
(x,\xi)\in E_u^*\setminus 0.
$$
Fix an open set $U_u\subset\widetilde V_u$ such that $\kappa(E_u^*)\subset U_u$
and there exists $\delta>0$ such that
\begin{equation}
  \label{e:radlow-1}
(\textstyle{1\over 2}H_{\mathbf X}-\Re\lambda+\delta)\mathbf w_u(x,\xi)\boldsymbol <0
\quad\text{for all}\quad \lambda\in\Omega,\quad (x,\xi)\in U_u.
\end{equation}
Take an arbitrary cutoff
$$
\chi_u\in \CIc(U_u;[0,1]),\quad
\chi_u=1\quad\text{near}\quad \kappa(E_u^*)
$$
and define
$$
\mathbf g_u:=\chi_u\sqrt{\mathbf w_u}\ \in\ C^\infty(T^*M;\End(\pi^*\mathcal E))
$$
which lies in the class $S^{m_u}$.

\noindent 3. Take a pseudodifferential operator
$$
\mathbf G_u\in \Psi^{m_u}_h(M;\End(\mathcal E)),\quad
\WFh(\mathbf G_u)\subset U_u,\quad
\sigma_h(\mathbf G_u)=\mathbf g_u.
$$
Note that $\mathbf G_u$ is elliptic on $\kappa(E_u^*)$.
Fix a cutoff function
\begin{equation}
  \label{e:radlow-2}
\psi_u\in \CIc(U_u\setminus\kappa(E_u^*))\quad\text{such that}\quad
\chi_u(H_p\chi_u)\leq|\psi_u|^2\quad\text{everywhere}
\end{equation}
and an operator $\mathbf E_u\in\Psi^{m_u}_h(M;\End(\mathcal E))$ such that
$$
\WFh(\mathbf E_u)\subset U_u\setminus \kappa(E_u^*),\quad
\sigma_h(\mathbf E_u)=\psi_u\sqrt{\mathbf w_u}.
$$
Now, fix $\mathbf A_u,\mathbf B_u\in\Psi^0_h(M;\End(\mathcal E))$ such that,
putting $\mathbf B_{1,u}:=\mathbf A_u^*\mathbf A_u+\mathbf B_u^*\mathbf B_u$,
$$
\begin{aligned}
\kappa(E_u^*)\subset\Ell_h(\mathbf A_u)&,\quad
\WFh(\mathbf A_u)\subset\Ell_h(\mathbf G_u),\quad
\WFh(\mathbf E_u)\subset\Ell_h(\mathbf B_u),\\
\WFh(\mathbf B_u)\subset U_u\setminus \kappa(E_u^*),&\quad
\WFh(\mathbf G_u)\subset\Ell_h(\mathbf A_u)\cup\Ell_h(\mathbf B_u)\subset\Ell_h(\mathbf B_{1,u}).
\end{aligned}
$$
By Lemma~\ref{l:commutator-computed}, we have
\begin{equation}
  \label{e:radlow-2.5}
h^{-1}\Im\langle (\mathbf P-ih\lambda)\mathbf u,\mathbf G_u^*\mathbf G_u\mathbf u\rangle_{L^2}
-\|\mathbf E_u\mathbf u\|_{L^2}^2+\delta\|\mathbf G_u\mathbf u\|_{L^2}^2=\langle \mathbf Z_u\mathbf u,\mathbf u\rangle_{L^2}
\end{equation}
where
$$
\mathbf Z_u\in\Psi^{2m_u}(M;\End(\mathcal E)),\quad
\mathbf Z_u^*=\mathbf Z_u,\quad
\WFh(\mathbf Z_u)\subset\Ell_h(\mathbf B_{1,u})
$$
has principal symbol
\begin{equation}
  \label{e:radlow-3}
\sigma_h(\mathbf Z_u)=(\chi_u(H_p\chi_u)-|\psi_u|^2)\mathbf w_u
+\chi_u^2(\textstyle{1\over 2}H_{\mathbf X}-\Re\lambda+\delta)\mathbf w_u.
\end{equation}
By~\eqref{e:radlow-1}--\eqref{e:radlow-2}, each of the two summands on the right-hand
side of~\eqref{e:radlow-3} is the product of a nonnegative function
in $\CIc(U_u)$ and a self-adjoint section of $\End(\mathcal E)$ which is positively homogeneous of degree $2m_u$ and negative
definite on $U_u$. Thus Lemma~\ref{l:sharp-Garding} gives
$$
\langle \mathbf Z_u\mathbf u,\mathbf u\rangle_{L^2}\leq Ch\|\mathbf B_{1,u}\mathbf u\|_{H^{m_u-{1\over 2}}_h}^2+\mathcal O(h^\infty)\|\mathbf u\|_{H^{-N}_h}^2
$$
which together with~\eqref{e:radlow-2.5} implies
\begin{equation}
  \label{e:radlow-4}
\begin{aligned}
\|\mathbf G_u\mathbf u\|_{L^2}\leq\,&C\|\mathbf E_u\mathbf u\|_{L^2}
+Ch^{-1}\|\mathbf B_{1,u}(\mathbf P-ih\lambda)\mathbf u\|_{H^{m_u}_h}\\
&+Ch^{1\over 2}\|\mathbf B_{1,u}\mathbf u\|_{H^{m_u-{1\over 2}}_h}+\mathcal O(h^\infty)\|\mathbf u\|_{H^{-N}_h}.
\end{aligned}
\end{equation}

\noindent 4. By the elliptic estimate, we can replace $\|\mathbf G_u\mathbf u\|_{L^2}$
on the left-hand side of~\eqref{e:radlow-4} by~$\|\mathbf A_u\mathbf u\|_{H^{m_u}_h}$.
Similarly we may replace $\|\mathbf E_u\mathbf u\|_{L^2}$
on the right-hand side of~\eqref{e:radlow-4} by~$\|\mathbf B_u\mathbf u\|_{H^{m_u}_h}$.
Finally, recalling the definition of $\mathbf B_{1,u}$ we see that
$$
\|\mathbf B_{1,u}\mathbf u\|_{H^{m_u-{1\over 2}}_h}
\leq C\big(\|\mathbf A_u\mathbf u\|_{H^{m_u-{1\over 2}}_h}
+\|\mathbf B_u\mathbf u\|_{H^{m_u-{1\over 2}}_h}\big).
$$
Taking $h$ small enough in~\eqref{e:radlow-4}, we now obtain~\eqref{e:radial-low-2}.
\end{proof}
%%%%%%%%%%%%%%%%%%%%%%%%%%%%%%%%%%%%%%%%%%%%%%%%%%%%%%%%%%%%%%%%%%%%%%%%%%%%%%%%

%%%%%%%%%%%%%%%%%%%%%%%%%%%%%%%%%%%%%%%%%%%%%%%%%%%%%%%%%%%%%%%%%%%%%%%%%%%%%%%%
\subsubsection{Proof of Lemma~\ref{l:invertibility}}

We are now ready to finish the proof of Lemma~\ref{l:invertibility},
following the proof of~\cite[Proposition~3.4]{DZ-zeta}.
Let $\mathbf A_s,\mathbf A_u,\mathbf B_u\in\Psi^0_h(M;\End(\mathcal E))$
be the operators from Lemmas~\ref{l:radial-high}--\ref{l:radial-low}.
We first combine ellipticity, propagation of singularities, and the high regularity
radial estimate to get
%%%%%%%%%%%%%%%%%%%%%%%%%%%%%%%%%%%%%%%%%%%%%%%%%%%%%%%%%%%%%%%%%%%%%%%%%%%%%%%%
\begin{lemm}\label{l:almost-there}
Let $\mathbf A\in\Psi^0_h(M;\End(\mathcal E))$ satisfy
$\WFh(\mathbf A)\cap \kappa(E_u^*)=\emptyset$. Then
\begin{equation}
\label{e:almost-there}
\|\mathbf A\mathbf u\|_{H^{\mathfrak m}_h}\leq
Ch^{-1}\|(\mathbf P-ih\lambda-i\mathbf Q)\mathbf u\|_{H^{\mathfrak m}_h}
+\mathcal O(h^\infty)\|\mathbf u\|_{H^{-N}_h}.
\end{equation}
\end{lemm}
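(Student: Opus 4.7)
The plan is to combine the three microlocal estimates we have already established---the elliptic estimate (Lemma~\ref{l:our-elliptic}), propagation of singularities (Lemma~\ref{l:our-propagation}), and the high regularity radial estimate (Lemma~\ref{l:radial-high})---using the global dynamical picture of Lemma~\ref{l:flow-char-global} as the combinatorial backbone. The key observation is that $\WFh(\mathbf A)$ naturally splits into three types of points: (i)~points outside the characteristic set $\{p=0\}$, which lie in $\Ell_h(\mathbf P)$; (ii)~characteristic points whose backward $H_p$-trajectory reaches a neighborhood of $\kappa(E_s^*)$; and (iii)~characteristic points whose backward trajectory reaches a neighborhood of the zero section, which is contained in $\Ell_h(\mathbf Q)$. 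The hypothesis $\WFh(\mathbf A)\cap\kappa(E_u^*)=\emptyset$ rules out the bad radial source, so Lemma~\ref{l:flow-char-global} guarantees that every characteristic point of $\WFh(\mathbf A)$ falls into case~(ii) or~(iii).

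Concretely, I would first fix neighborhoods $V_s\subset\Ell_h(\mathbf A_s)\cap\widetilde V_s$ of $\kappa(E_s^*)$ and $V_0\subset\Ell_h(\mathbf Q)$ of the zero section, where $\mathbf A_s$ is the operator furnished by Lemma~\ref{l:radial-high}. For each $(x_0,\xi_0)\in\WFh(\mathbf A)\cap\{p=0\}$, Lemma~\ref{l:flow-char-global} supplies a time $T(x_0,\xi_0)\geq 0$ with $e^{-T(x_0,\xi_0)H_p}(x_0,\xi_0)\in V_s\cup V_0$; by continuity of the flow this persists on an open neighborhood of $(x_0,\xi_0)$, and by compactness of $\WFh(\mathbf A)$ finitely many such neighborhoods, together with $\Ell_h(\mathbf P)$ covering the non-characteristic part, suffice. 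Associated to each neighborhood I would choose a microlocal cutoff and apply Lemma~\ref{l:our-propagation} with a control operator $\mathbf B$ whose elliptic set contains the corresponding endpoint in $V_s\cup V_0$; the resulting $\|\mathbf B\mathbf u\|_{H^{\mathfrak m}_h}$ term splits into a piece with wavefront set in $V_s\subset\Ell_h(\mathbf A_s)$, absorbed by Lemma~\ref{l:radial-high}, and a piece with wavefront set in $V_0\subset\Ell_h(\mathbf Q)$, absorbed by Lemma~\ref{l:our-elliptic}. A final application of the elliptic estimate handles the non-characteristic piece, and summing the resulting estimates yields~\eqref{e:almost-there}.

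The main technical burden is bookkeeping rather than new analysis: one must select the finite cover and the corresponding $\mathbf B_1$ cutoffs uniformly so that the control condition of Lemma~\ref{l:our-propagation}---namely $e^{-tH_p}(x,\xi)\in\Ell_h(\mathbf B_1)$ for all $t\in[0,T]$---is satisfied along every relevant trajectory segment, while keeping all wavefront sets compatible with the constraints $\WFh(\mathbf A_s),\WFh(\mathbf B_{1,s})\subset\widetilde V_s$ coming from Lemma~\ref{l:radial-high}. Since each of the finitely many trajectory segments has finite length and, thanks to the hypothesis on $\WFh(\mathbf A)$, stays in a compact subset of $\overline{T^*M}\setminus\kappa(E_u^*)$, the constants in all the auxiliary estimates remain uniform in $h$, and no new analytic ingredient beyond what has been assembled so far is required.
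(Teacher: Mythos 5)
Your proposal is correct and follows essentially the same route as the paper: decompose $\WFh(\mathbf A)$ into an elliptic part and a characteristic part controlled via Lemma~\ref{l:flow-char-global}, then combine Lemma~\ref{l:our-propagation} (propagating towards $\Ell_h(\mathbf A_s)\cup\Ell_h(\mathbf Q)$) with Lemma~\ref{l:radial-high} and Lemma~\ref{l:our-elliptic}. The paper packages your finite-cover bookkeeping into a single open set $\mathcal U$ of points flowing backward into $\Ell_h(\widetilde{\mathbf Q})\cup\Ell_h(\mathbf A_s)$ and one application of propagation with $\mathbf B=\mathbf A_s+\widetilde{\mathbf Q}$ and $\mathbf B_1$ elliptic everywhere, so the compatibility constraints you worry about do not actually arise.
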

%%%%%%%%%%%%%%%%%%%%%%%%%%%%%%%%%%%%%%%%%%%%%%%%%%%%%%%%%%%%%%%%%%%%%%%%%%%%%%%%
\begin{proof}
Fix an operator
$\widetilde{\mathbf Q}\in\Psi^0_h(M;\End(\mathcal E))$
such that $\WFh(\widetilde{\mathbf Q})\subset\Ell_h(\mathbf Q)$
and $\Ell_h(\widetilde{\mathbf Q})$ contains the zero section of~$T^*M$.
Define the open set $\mathcal U\subset\overline{T^*M}$ as follows:
$$
\mathcal U:=\{(x,\xi)\in\overline{T^*M}\mid\exists t\geq 0:\ 
e^{-tH_p}(x,\xi)\in\Ell_h(\widetilde{\mathbf Q})\cup\Ell_h(\mathbf A_s)\}.
$$
Since $\Ell_h(\mathbf A_s)$ contains $\kappa(E_s^*)$,
by Lemma~\ref{l:flow-char-global}
we have $\WFh(\mathbf A)\cap\{p=0\}\subset\mathcal U$,
that is $\WFh(\mathbf A)\subset\Ell_h(\mathbf P)\cup\mathcal U$.
Using a microlocal partition of unity, we write
$$
\mathbf A=\mathbf A_1+\mathbf A_2,\quad
\mathbf A_1,\mathbf A_2\in\Psi^0_h(M;\End(\mathcal E)),\quad
\WFh(\mathbf A_1)\subset \Ell_h(\mathbf P),\quad
\WFh(\mathbf A_2)\subset \mathcal U.
$$
By the elliptic estimate, Lemma~\ref{l:our-elliptic}, we have
\begin{equation}
  \label{e:almost-1}
\|\mathbf A_1 \mathbf u\|_{H^{\mathfrak m}_h}
+\|\widetilde{\mathbf Q}\mathbf u\|_{H^{\mathfrak m}_h}
\leq C\|(\mathbf P-ih\lambda-i\mathbf Q)\mathbf u\|_{H^{\mathfrak m}_h}
+\mathcal O(h^\infty)\|\mathbf u\|_{H^{-N}_h}.
\end{equation}
Next, by propagation of singularities, Lemma~\ref{l:our-propagation},
with $\mathbf B:=\mathbf A_s+\widetilde{\mathbf Q}$, $\Ell_h(\mathbf B)=\Ell_h(\mathbf A_s)\cup\Ell_h(\widetilde{\mathbf Q})$, we have
\begin{equation}
  \label{e:almost-2}
\begin{aligned}
\|\mathbf A_2\mathbf u\|_{H^{\mathfrak m}_h}\leq&\, C\|\mathbf A_s\mathbf u\|_{H^{\mathfrak m}_h}
+C\|\widetilde{\mathbf Q}\mathbf u\|_{H^{\mathfrak m}_h}
\\&+Ch^{-1}\|(\mathbf P-ih\lambda-i\mathbf Q)\mathbf u\|_{H^{\mathfrak m}_h}
+\mathcal O(h^\infty)\|\mathbf u\|_{H^{-N}_h}.
\end{aligned}
\end{equation}
Finally, recall that by the high regularity radial estimate, Lemma~\ref{l:radial-high},
\begin{equation}
  \label{e:almost-3}
\|\mathbf A_s\mathbf u\|_{H^{\mathfrak m}_h}\leq Ch^{-1}\|(\mathbf P-ih\lambda-i\mathbf Q)\mathbf u\|_{H^{\mathfrak m}_h}+\mathcal O(h^\infty)\|\mathbf u\|_{H^{-N}_h}.
\end{equation}
Putting together~\eqref{e:almost-1}--\eqref{e:almost-3}, we get~\eqref{e:almost-there}.
\end{proof}
%%%%%%%%%%%%%%%%%%%%%%%%%%%%%%%%%%%%%%%%%%%%%%%%%%%%%%%%%%%%%%%%%%%%%%%%%%%%%%%%
Now, recall that the low regularity radial estimate, Lemma~\ref{l:radial-low}, gives
\begin{equation}
  \label{e:lofi}
\|\mathbf A_u\mathbf u\|_{H^{\mathfrak m}_h}\leq C\|\mathbf B_u\mathbf u\|_{H^{\mathfrak m}_h}
+Ch^{-1}\|(\mathbf P-ih\lambda-i\mathbf Q)\mathbf u\|_{H^{\mathfrak m}_h}
+\mathcal O(h^\infty)\|\mathbf u\|_{H^{-N}_h}.
\end{equation}
Take $\mathbf A\in \Psi^0_h(\mathbf M;\End(\mathcal E))$ such that
$$
\overline{T^*M}\setminus\Ell_h(\mathbf A_u)\subset\Ell_h(\mathbf A),\quad
\WFh(\mathbf A)\subset\overline{T^*M}\setminus\kappa(E_u^*).
$$
Since $\WFh(\mathbf B_u)\cap \kappa(E_u^*)=\emptyset$, Lemma~\ref{l:almost-there} applies
to both $\mathbf A$ and $\mathbf B_u$ to give
\begin{equation}
  \label{e:altoid}
\|\mathbf A\mathbf u\|_{H^{\mathfrak m}_h}+
\|\mathbf B_u\mathbf u\|_{H^{\mathfrak m}_h}
\leq Ch^{-1}\|(\mathbf P-ih\lambda-i\mathbf Q)\mathbf u\|_{H^{\mathfrak m}_h}
+\mathcal O(h^\infty)\|\mathbf u\|_{H^{-N}_h}.
\end{equation}
Since $\mathbf A^*\mathbf A+\mathbf A_u^*\mathbf A_u\in \Psi^0_h(M;\End(\mathcal E))$
is elliptic on the entire $\overline{T^*M}$, we can use the elliptic estimate
to derive from~\eqref{e:altoid} and~\eqref{e:lofi} the bound
$$
\begin{aligned}
\|\mathbf u\|_{H^{\mathfrak m}_h}&\leq
C\|\mathbf A\mathbf u\|_{H^{\mathfrak m}_h}+C\|\mathbf A_u\mathbf u\|_{H^{\mathfrak m}_h}
+\mathcal O(h^\infty)\|u\|_{H^{-N}_h}\\
&\leq Ch^{-1}\|(\mathbf P-ih\lambda-i\mathbf Q)\mathbf u\|_{H^{\mathfrak m}_h}
+\mathcal O(h^\infty)\|\mathbf u\|_{H^{-N}_h}.
\end{aligned}
$$
For $h$ small enough, we may remove the last term on the right-hand side, obtaining~\eqref{e:invest-1} and finishing the proof of Lemma~\ref{l:invertibility}.

%%%%%%%%%%%%%%%%%%%%%%%%%%%%%%%%%%%%%%%%%%%%%%%%%%%%%%%%%%%%%%%%%%%%%%%%%%%%%%%%
\subsection{Meromorphic continuation}

We finally give the proof of Theorem~\ref{t:main}, following~\cite[\S\S3.3--3.4]{DZ-zeta}.
Let $\Omega\subset\mathbb C$ be a compact set satisfying the threshold regularity condition~\eqref{e:inv-threshold}.
Fix $h>0$ small enough so that Lemma~\ref{l:invertibility} applies.
We henceforth suppress the subscript $h$ in the notation $H^{\mathfrak m}_h$.
Define the space
$$
D^{\mathfrak m}:=\{\mathbf u\in H^{\mathfrak m}\mid \mathbf P\mathbf u\in H^{\mathfrak m}\}
$$
with the Hilbert norm
$$
\|\mathbf u\|_{D^{\mathfrak m}}^2:=\|\mathbf u\|_{H^{\mathfrak m}}^2
+\|\mathbf P\mathbf u\|_{H^{\mathfrak m}}^2.
$$
Since $\WFh(\mathbf Q)$ does not intersect the fiber infinity, $\mathbf Q$
is a smoothing operator. In particular, $\mathbf Q$ maps $H^{\mathfrak m}$ to itself.
Therefore,
\begin{equation}
  \label{e:direct-operator}
\mathbf P-ih\lambda-i\mathbf Q:D^{\mathfrak m}\to H^{\mathfrak m}
\end{equation}
is a holomorphic family of bounded operators.

The space $C^\infty(M;\mathcal E)$ is dense in $D^{\mathfrak m}$ as follows
from~\cite[Lemma~E.45]{DZ-Book} applied to the conjugated operator
$\widetilde{\mathbf P}$ from~\S\ref{s:conjugated-operator} (whose proof adapts directly to the case of operators
on vector bundles). Therefore, the estimates of Lemma~\ref{l:invertibility}
show that there exists a constant $C$ such that for all $\lambda\in \Omega$
\begin{equation}
\label{e:total-estimate}
\begin{aligned}
\|\mathbf u\|_{D^{\mathfrak m}}\leq C\|(\mathbf P-ih\lambda-i\mathbf Q)\mathbf u\|_{H^{\mathfrak m}}&\quad\text{for all}\quad \mathbf u\in D^{\mathfrak m},\\
\|\mathbf v\|_{D^{-\mathfrak m}}\leq C\|(\mathbf P-ih\lambda-i\mathbf Q)^*\mathbf v\|_{H^{-\mathfrak m}}&\quad\text{for all}\quad \mathbf v\in D^{-\mathfrak m}.
\end{aligned}
\end{equation}
Here $C$ and $\mathbf Q$ depend on~$h$, however we already fixed $h$ small enough above.

By a standard argument from functional analysis (see for example the proof of~\cite[Theorem~5.30]{DZ-Book}), the estimates~\eqref{e:total-estimate} imply that the operator~\eqref{e:direct-operator} is invertible for all $\lambda\in\Omega$. 
Since $\mathbf Q$ is smoothing, it is a compact operator
$D^{\mathfrak m}\to H^{\mathfrak m}$. 
It follows that $\mathbf P-ih\lambda:D^{\mathfrak m}\to H^{\mathfrak m}$
is a Fredholm operator of index~0 for all $\lambda\in\Omega$. 
Recalling that $\mathbf P=-ih\mathbf X$ and $\Omega$
is an arbitrary compact subset of
$$
\Omega_{\mathfrak m}:=\{\lambda\in\mathbb C\mid\Re\lambda>\max(r_u(m_u),r_s(m_s))\},
$$
we get the following
Fredholm property:
\begin{equation}
  \label{e:key-fredholm}
\mathbf X+\lambda:D^{\mathfrak m}\to H^{\mathfrak m}\quad\text{is a Fredholm operator
of index~0 for all }\lambda\in\Omega_{\mathfrak m}.
\end{equation}
Recall from~\eqref{e:P-R-resolvent-gen} that the Pollicott--Ruelle resolvent
$R_{\mathbf X}(\lambda)$ was defined for $\Re\lambda>C_{\mathbf X}$ by
\begin{equation}
  \label{e:prback}
R_{\mathbf X}(\lambda)\mathbf f:=\int_0^\infty e^{-\lambda t}e^{-t\mathbf X}\mathbf f\,dt\quad\text{for}\quad
\mathbf f\in C^\infty(M;\mathcal E).
\end{equation}
The operator $e^{-t\mathbf X}$ is bounded on the space
$H^{m_s}(M;\mathcal E)$ locally uniformly in~$t$. Since $e^{-t\mathbf X}$
forms a group in~$t$, we see that there exists a constant
$C_{\mathbf X}(m_s)\geq C_{\mathbf X}$ such that
$$
\|e^{-t\mathbf X}\|_{H^{m_s}\to H^{m_s}}=\mathcal O(e^{C_{\mathbf X}(m_s)t})\quad\text{as}\quad t\to\infty.
$$
For $\Re\lambda>C_{\mathbf X}(m_s)$ and $\mathbf f\in C^\infty$, the integral~\eqref{e:prback} converges in the space $H^{m_s}$ and thus (recalling~\eqref{e:anis-squeeze}) in the larger space $H^{\mathfrak m}$. Thus $\mathbf u:=R_{\mathbf X}(\lambda)\mathbf f$
lies in $H^{\mathfrak m}$ and (recalling~\eqref{e:inversor-gen}) satisfies
$(\mathbf X+\lambda)\mathbf u=\mathbf f$. 
It follows that
the range of the operator~\eqref{e:key-fredholm} contains $C^\infty(M;\mathcal E)$
and is thus dense in $H^{\mathfrak m}$.
From the Fredholm property we then see that when $\Re\lambda>\max(C_{\mathbf X}(m_s),r_u(m_u),r_s(m_s))$, the operator~\eqref{e:key-fredholm} is invertible and its inverse coincides on $C^\infty(M;\mathcal E)$ with the Pollicott--Ruelle resolvent $R_{\mathbf X}(\lambda)$.
Now by Analytic Fredholm Theory~\cite[Theorem~C.8]{DZ-Book} we see that
$$
(\mathbf X+\lambda)^{-1}:H^{\mathfrak m}\to D^{\mathfrak m},\quad
\lambda\in \Omega_{\mathfrak m}
$$
is meromorphic with poles of finite rank. This operator gives the meromorphic
continuation of the Pollicott--Ruelle resolvent, which finishes the proof of Theorem~\ref{t:main}.

%%%%%%%%%%%%%%%%%%%%%%%%%%%%%%%%%%%%%%%%%%%%%%%%%%%%%%%%%%%%%%%%%%%%%%%%%%%%%%%%
%%%%%%%%%%%%%%%%%%%%%%%%%%%%%%%%%%%%%%%%%%%%%%%%%%%%%%%%%%%%%%%%%%%%%%%%%%%%%%%%
\medskip\noindent\textbf{Acknowledgements.}
The author was supported by NSF CAREER grant DMS-1749858
and a Sloan Research Fellowship.
The author would like to thank an anonymous referee for many suggestions to improve the presentation.

%%%%%%%%%%%%%%%%%%%%%%%%%%%%%%%%%%%%%%%%%%%%%%%%%%%%%%%%%%%%%%%%%%%%%%%%%%%%%%%%
% BIBLIOGRAPHY
%%%%%%%%%%%%%%%%%%%%%%%%%%%%%%%%%%%%%%%%%%%%%%%%%%%%%%%%%%%%%%%%%%%%%%%%%%%%%%%%
\bibliographystyle{alpha}
\bibliography{Dyatlov,General,Ruelle}

\begin{thebibliography}{DGRS20}

\bibitem[AB18]{Adam-Baladi}
Alexander Adam and Viviane Baladi.
\newblock Horocycle averages on closed manifolds and transfer operators, 2018.
\newblock \arXiv{1809.04062}.

\bibitem[Bal18]{Baladi-Book}
Viviane Baladi.
\newblock {\em Dynamical zeta functions and dynamical determinants for
  hyperbolic maps}, volume~68 of {\em Ergebnisse der Mathematik und ihrer
  Grenzgebiete. 3. Folge. A Series of Modern Surveys in Mathematics [Results in
  Mathematics and Related Areas. 3rd Series. A Series of Modern Surveys in
  Mathematics]}.
\newblock Springer, Cham, 2018.
\newblock A functional approach.

\bibitem[BKL02]{Blank-Keller-Liverani}
Michael Blank, Gerhard Keller, and Carlangelo Liverani.
\newblock Ruelle-{P}erron-{F}robenius spectrum for {A}nosov maps.
\newblock {\em Nonlinearity}, 15(6):1905--1973, 2002.

\bibitem[BL07]{Butterley-Liverani}
Oliver Butterley and Carlangelo Liverani.
\newblock Smooth {A}nosov flows: correlation spectra and stability.
\newblock {\em J. Mod. Dyn.}, 1(2):301--322, 2007.

\bibitem[BL21]{Bonthonneau-Lefeuvre}
Yannick~Guedes Bonthonneau and Thibault Lefeuvre.
\newblock Radial source estimates in h\"older-zygmund spaces for hyperbolic
  dynamics, 2021.
\newblock \arXiv{2011.06403}.

\bibitem[BT07]{Baladi-Tsujii}
Viviane Baladi and Masato Tsujii.
\newblock Anisotropic {H}\"{o}lder and {S}obolev spaces for hyperbolic
  diffeomorphisms.
\newblock {\em Ann. Inst. Fourier (Grenoble)}, 57(1):127--154, 2007.

\bibitem[BT08]{Baladi-Tsujii-Spectra}
Viviane Baladi and Masato Tsujii.
\newblock Spectra of differentiable hyperbolic maps.
\newblock In {\em Traces in number theory, geometry and quantum fields},
  Aspects Math., E38, pages 1--21. Friedr. Vieweg, Wiesbaden, 2008.

\bibitem[BWS20]{Borns-Weil-Shen}
Yonah Borns-Weil and Shu Shen.
\newblock Dynamical zeta functions in the nonorientable case, 2020.
\newblock to appear in Nonlinearity; \arXiv{2007.08043}.

\bibitem[DG16]{DG-open}
Semyon Dyatlov and Colin Guillarmou.
\newblock Pollicott-{R}uelle resonances for open systems.
\newblock {\em Ann. Henri Poincar\'{e}}, 17(11):3089--3146, 2016.

\bibitem[DG18]{DG-afterword}
Semyon Dyatlov and Colin Guillarmou.
\newblock Afterword: dynamical zeta functions for {A}xiom {A} flows.
\newblock {\em Bull. Amer. Math. Soc. (N.S.)}, 55(3):337--342, 2018.

\bibitem[DGRS20]{DGRS}
Nguyen~Viet Dang, Colin Guillarmou, Gabriel Rivi\`ere, and Shu Shen.
\newblock The {F}ried conjecture in small dimensions.
\newblock {\em Invent. Math.}, 220(2):525--579, 2020.

\bibitem[DZ16]{DZ-zeta}
Semyon Dyatlov and Maciej Zworski.
\newblock Dynamical zeta functions for {A}nosov flows via microlocal analysis.
\newblock {\em Ann. Sci. \'{E}c. Norm. Sup\'{e}r. (4)}, 49(3):543--577, 2016.

\bibitem[DZ19]{DZ-Book}
Semyon Dyatlov and Maciej Zworski.
\newblock {\em Mathematical theory of scattering resonances}, volume 200 of
  {\em Graduate Studies in Mathematics}.
\newblock American Mathematical Society, Providence, RI, 2019.

\bibitem[FRS08]{Faure-Roy-Sjostrand}
Fr\'{e}d\'{e}ric Faure, Nicolas Roy, and Johannes Sj\"{o}strand.
\newblock Semi-classical approach for {A}nosov diffeomorphisms and {R}uelle
  resonances.
\newblock {\em Open Math. J.}, 1:35--81, 2008.

\bibitem[FS11]{Faure-Sjostrand}
Fr\'{e}d\'{e}ric Faure and Johannes Sj\"{o}strand.
\newblock Upper bound on the density of {R}uelle resonances for {A}nosov flows.
\newblock {\em Comm. Math. Phys.}, 308(2):325--364, 2011.

\bibitem[GdP21]{Guillarmou-Poyferre}
Colin Guillarmou and Thibault de~Poyferr{\'{e}}.
\newblock A paradifferential approach for hyperbolic dynamical systems and
  applications, 2021.
\newblock with an appendix by Yannick Guedes Bonthonneau; \arXiv{2103.15397}.

\bibitem[GL06]{Gouezel-Liverani}
S\'{e}bastien Gou\"{e}zel and Carlangelo Liverani.
\newblock Banach spaces adapted to {A}nosov systems.
\newblock {\em Ergodic Theory Dynam. Systems}, 26(1):189--217, 2006.

\bibitem[HK90]{Hurder-Katok}
S.~Hurder and A.~Katok.
\newblock Differentiability, rigidity and {G}odbillon-{V}ey classes for
  {A}nosov flows.
\newblock {\em Inst. Hautes \'{E}tudes Sci. Publ. Math.}, (72):5--61 (1991),
  1990.

\bibitem[H{\"{o}}r07]{Hormander3}
Lars H{\"{o}}rmander.
\newblock {\em The analysis of linear partial differential operators. {III}}.
\newblock Classics in Mathematics. Springer, Berlin, 2007.
\newblock Pseudo-differential operators, Reprint of the 1994 edition.

\bibitem[KH95]{Katok-Hasselblatt}
Anatole Katok and Boris Hasselblatt.
\newblock {\em Introduction to the modern theory of dynamical systems},
  volume~54 of {\em Encyclopedia of Mathematics and its Applications}.
\newblock Cambridge University Press, Cambridge, 1995.
\newblock With a supplementary chapter by Katok and Leonardo Mendoza.

\bibitem[Kli95]{Klingenberg-Book}
Wilhelm P.~A. Klingenberg.
\newblock {\em Riemannian geometry}, volume~1 of {\em De Gruyter Studies in
  Mathematics}.
\newblock Walter de Gruyter \& Co., Berlin, second edition, 1995.

\bibitem[Liv04]{Liverani-contact}
Carlangelo Liverani.
\newblock On contact {A}nosov flows.
\newblock {\em Ann. of Math. (2)}, 159(3):1275--1312, 2004.

\bibitem[Liv05]{Liverani-Fredholm}
Carlangelo Liverani.
\newblock Fredholm determinants, {A}nosov maps and {R}uelle resonances.
\newblock {\em Discrete Contin. Dyn. Syst.}, 13(5):1203--1215, 2005.

\bibitem[NZ15]{Nonnenmacher-Zworski-Inventiones}
St\'{e}phane Nonnenmacher and Maciej Zworski.
\newblock Decay of correlations for normally hyperbolic trapping.
\newblock {\em Invent. Math.}, 200(2):345--438, 2015.

\bibitem[Pat99]{Paternain-Book}
Gabriel~P. Paternain.
\newblock {\em Geodesic flows}, volume 180 of {\em Progress in Mathematics}.
\newblock Birkh\"{a}user Boston, Inc., Boston, MA, 1999.

\bibitem[Tsu12]{Tsujii-contact}
Masato Tsujii.
\newblock Contact {A}nosov flows and the {F}ourier-{B}ros-{I}agolnitzer
  transform.
\newblock {\em Ergodic Theory Dynam. Systems}, 32(6):2083--2118, 2012.

\bibitem[Wan20]{Wang-Besov}
Jian Wang.
\newblock Sharp radial estimates in besov spaces, 2020.
\newblock \arXiv{2003.11218}.

\bibitem[Zwo12]{Zworski-Book}
Maciej Zworski.
\newblock {\em Semiclassical analysis}, volume 138 of {\em Graduate Studies in
  Mathematics}.
\newblock American Mathematical Society, Providence, RI, 2012.

\bibitem[Zwo17]{Zworski-resonances-review}
Maciej Zworski.
\newblock Mathematical study of scattering resonances.
\newblock {\em Bull. Math. Sci.}, 7(1):1--85, 2017.

\end{thebibliography}

\end{document}